\documentclass[12pt,a4paper]{amsart}

\usepackage[a4paper,left=2cm,right=2cm,top=15mm,bottom=15mm]{geometry}

\usepackage{amsmath}
\usepackage{amsthm}
\usepackage{amssymb}
\usepackage{amsfonts}
\usepackage{latexsym}
\usepackage{mathtools}
\usepackage{graphics}
\DeclareMathOperator{\intt}{int}

\DeclareMathOperator{\sign}{sign}

\DeclareMathOperator{\Res}{Res}

\usepackage[dvipsnames]{xcolor}

\definecolor{blue}{rgb}{0,0,0.8}
\definecolor{red}{rgb}{0.8,0,0}
\definecolor{darkgreen}{rgb}{0,0.6,0}

\newcommand\restr[2]{{  \left.\kern-\nulldelimiterspace
  #1
  \vphantom{\big|}
  \right|_{#2}
  }}

\newcommand{\Wn}{{\mathcal W}_n}
\newcommand{\wx}{\widetilde{\mathbf{x}}}
\newcommand{\wxk}{\widetilde{\mathbf{x}^k}}
\newcommand{\wy}{\widetilde{\mathbf{y}}}
\newcommand{\Ln}{{\mathcal L}_n}
\newcommand{\oS}{\overline{S}}

\newcommand{\DS}{\partial S}

\newcommand{\mol}{\overline{m}}
\newcommand{\mul}{\underline{m}}

\newcommand{\Pn}{{\mathcal P}_n}
\newcommand{\WW}{{\mathcal W}}

\newcommand{\I}{{\mathcal I}}

\newcommand{\W}{{\mathcal W}}

\newcommand{\JJ}{{\mathcal J}}

\newcommand{\Ji}{{\mathcal J}_i}
\newcommand{\Jj}{{\mathcal J}_j}

\newcommand{\Kij}{{\mathcal K}_{ij}}

\newcommand{\YY}{{\mathcal Y}}

\newcommand{\LL}{{\mathcal L}}

\newcommand{\ai}{{\alpha}_i}

\newcommand{\lk}{{\ell}_k}

\newcommand{\RR}{{\mathbb R}}

\newcommand{\II}{{\mathbb I}}

\newcommand{\de}{{\delta}}

\newcommand{\ff}{\varphi}
\newcommand{\al}{{\alpha}}
\newcommand{\be}{{\beta}}
\newcommand{\ve}{{\varepsilon}}
\newcommand{\vk}{{\varepsilon}_k}

\newcommand{\vji}{{\varepsilon}_{j,i}}
\newcommand{\vkj}{{\varepsilon}_{k,j}}
\newcommand{\vki}{{\varepsilon}_{k,i}}

\newcommand{\yy}{\mathbf{y}}
\newcommand{\ee}{\mathbf{e}}
\newcommand{\zz}{\mathbf{z}}
\newcommand{\vv}{\mathbf{v}}
\newcommand{\uu}{\mathbf{u}}
\newcommand{\xx}{\mathbf{x}}
\newcommand{\ba}{\mathbf{a}}
\newcommand{\bb}{\mathbf{b}}
\newcommand{\mm}{\mathbf{m}}
\newcommand{\ww}{\mathbf{w}}

\newcommand{\cc}{\mathbf{c}}
\newcommand{\One}{{\mathbf 1}}
\newcommand{\Null}{{\mathbf 0}}

\newtheorem{theorem}{Theorem}
\newtheorem{corollary}{Corollary}
\newtheorem{lemma}{Lemma}
\newtheorem{proposition}{Proposition}
\newtheorem{definition}{Definition}

\theoremstyle{definition}

\newtheorem{remark}{Remark}

\begin{document}

\title[Weighted Lagrange interpolation on the halfline]{Conjectures of Bernstein and Erd\H os \\ for weighted Lagrange interpolation \\ on the halfline with exponential weights}

\author{Szil\'ard Gy. R\'ev\'esz}\thanks{Supported in part by Hungarian National Foundation for Scientific Research, Grant \#'s K-146387, K-147153 and Excellence No. 151341.}

\author{Patr\'icia Szokol}\thanks{This work was supported by the HUN-REN Hungarian Research Network through funding provided to the author.}

\address{Patrícia Szokol
\newline  \indent Faculty of Informatics, University of Debrecen  \newline \indent H-4002 Debrecen, \newline \indent P.O. Box 400,  HUNGARY \newline \indent and
\newline \indent  HUN-REN-UD Equations, Functions, Curves and their Applications Research Group} \email{szokol.patricia@inf.unideb.hu}

\address{Szil\'ard Gy. R\'ev\'esz
\newline  \indent HUN-REN A. R\'enyi Institute of Mathematics \newline \indent Budapest, Re\'altanoda utca 13-15. \newline \indent 1053 HUNGARY} \email{revesz.szilard@renyi.hu}

\date{\today}

\begin{abstract}
Let $\II=[a,b]$ and consider the degree $n$ Lagrange interpolation at the nodes $\xx$, where $\xx\in S:=\{\xx=(x_0,x_1,\ldots,x_n):a=x_0<x_1<\ldots<x_n=b\}$. Then
the norm of the Lagrange interpolation operator $\LL: f \to \LL(f)\in \Pn$, (where $\Pn$ is the space of real algebraic polynomials of degree at most $n$), is the maximum of the \emph{Lebesgue function} $L(\xx,t)$ on $\II$.

Bernstein conjectured that the norm of $\LL$ becomes minimal exactly for node systems which exhibit an \emph{equioscillation} property in that the interval maxima $m_k(\xx):=\max_{[x_{k-1},x_k]} L(\xx,\cdot) $ ($k=1,\ldots,n$) are all equal. Erd\H{o}s added to the conjecture the \emph{sandwich property} that if $\yy$ is an extremal (minimal norm) system, then for any other node system $\xx$ there have to be indices $i,j$ with $m_i(\yy)<m_i(\xx)$ and $m_j(\yy)> m_j(\xx)$.

The conjectures were proved by Kilgore and de Boor--Pinkus in 1978. Since then, analogous results were obtained only for a few cases when interpolation is made to certain very special spaces of polynomials (instead of $\Pn$), or when we apply weighted interpolation with rather special weights. Worse than that, it turned out that published proofs of results on infinite intervals and weighted interpolation were seriously flawed.

Here we prove the Bernstein and Erd\H os Conjectures for the case of exponentially weighted polynomials on the halfline. This is the first proof of these conjectures in a %%%weighted
situation where, contrary to all existing successful proofs, we encounter singularity of certain derivative matrices.
\end{abstract}

\maketitle

{\bf MSC 2020 Subject Classification.} Primary: 41A05, 41A52, 41A81; Secondary: 41A10, 41A44, 41A50.

{\bf Keywords and phrases.} {\it Lagrange interpolation, weighted interpolation, weighted maximum norm, Jacobi determinant, Implicit function theorem, Hadamard's homeomorphism theorem, proper mapping, linear optimization, Lagrange multiplicators, minimax point, connected domain, equioscillation, sandwich property, Markov interlacing property, extended Chebyshev-Haar system, intertwining.}

\section{Introduction}\label{sec:Intro-new}

The aim of the present paper is to establish an appropriate form of the Bernstein and Erd\H os Conjectures for exponentially weighted polynomial interpolation in the halfline.

The original Bernstein Conjecture was formulated for algebraic polynomial interpolation on a finite interval $[a,b]$ over a century ago \cite{Bernstein}. It proved to be a hard problem -- cracking it lasted for half a century. In the meantime, Erd\H os \cite{Erdos-1, Erdos-2} proposed an additional conjecture, furnishing yet another challenge. In the classical setting of unweighted algebraic polynomials on a compact interval, both problems were solved in the seventies of the last century \cite{Kilgore, CBoorPinkus, Kilgore-Cheney}.

After the successful solution of the classical case, it naturally occurred to investigate the analogous questions for weighted interpolation in weighted spaces of continuous functions. For definiteness, let us formulate these questions concretely. Let an interval $\II$ be given with a say continuous and positive weight function $w:\II \to (0,\infty)$, and consider the space $C_w(\II):= \{f \in C(\II)~:~ \|f\|_w:=\|fw\|_\infty <\infty\}$.

In the present paper we focus on \emph{exponentially weighted} interpolation on the halfline interval $\II=[0,\infty)$. The object of study are the norm of the weighted interpolation operator $\Ln^{(w)}: C_w(\II) \to \Wn$ (where $\Wn:=w\Pn$ is the respective weighted polynomial space), and the optimal choice of the nodes $0=x_0<x_1<\dots<x_n$ which minimize the operator norm. As $x_0=0$ is fixed, our aim is to find the optimal nodes from $S:=\{(x_1,\ldots,x_n)\in \RR^n: 0<x_1<\ldots<x_n\}$.

In the course of discussion we will often formulate our findings in various more general settings, but our results will finally be restricted to the exponentially weighted case. The reader will see in the course of our arguments why we need to specialize at some points. Nevertheless we hope to be able to attack more general cases in the future, hence we find it appropriate to formulate at least some parts of the discussion in a greater generality. So for introducing notations we will write $w(t)$ for our weight, and will restrict to $w(t)=\exp(-t)$ only when need be.

Recall that there are interpolatory basis functions
\begin{equation}\label{eq:hkt}
h_k(t):=h_k(\xx;t):= \frac{w(t)}{w(x_k)} \prod_{j=0,~j\ne k}^{n}  \left(\frac{t-x_j}{x_k-x_j}\right) = \frac{w(t)}{w(x_k)}  \ell_k(t) ,
\end{equation}
where $\ell_k(t):=\ell_k(\xx,t)$ are the basis functions of the ordinary Lagrange interpolation, and as a result, $h_k(x_j)=\de_{jk}$ -- the Kronecker delta of $j$ and $k$ which is $1$ if $j=k$ and 0 otherwise -- while $h_k\in\WW_{n}$ ($k=0,\ldots,n$).

The Lagrange interpolation is the operator which sends functions $f:\II \to \RR $ to the interpolant
\begin{equation}\label{Lagrange-operator}
\Ln(\xx,f,\cdot):=\mathcal{L}(\xx,f,\cdot):=\sum_{k=0}^{n} f(x_k)\ell_k .
\end{equation}

The unit ball of $C_w(\II)$ is determined by the condition $\|f\|_w\le 1$, and analogously the norm of the obtained interpolants will be the weighted norms with the same weight $w$. Therefore, the norm of the operator $\|\Ln\|_{w}$ is
$$
\| \Ln\|_w := \sup_{\|f\|_{w} \le 1} \| \Ln(\xx,f,\cdot) \|_w \le \sup_{|f(x_k)w(x_k)|\le 1} \sup_{t\in \II} \left|w(t)\sum_{k=0}^{n} f(x_k)\ell_k(t) \right|,
$$
and hence we get the estimate
\begin{equation*}
\begin{aligned}
\|\Ln\|_w& \le \sup_{|f(x_k)w(x_k)|\le 1} \sup_{t\in \II} \left|\sum_{k=0}^{n} f(x_k){w(x_k)} h_k(t) \right|\\
 &\le \sup_{t\in \II} \sum_{k=0}^{n} |h_k(t)|=: \sup_{t\in \II} L_n (w,\xx,t)=\| L_n \|_{\infty}.
\end{aligned}
\end{equation*}
It is easy to see by considering an appropriate continuous connection of the interpolating points  $(x_k,f(x_k)):=(x_k,\pm \frac{1}{w(x_k)})$ that here the inequality is in fact an equality. The appearing function
\begin{equation}\label{Lebesguefunction}
L_n(w,\xx,t):=L(w,\xx,t):=\sum_{k=0}^{n} \left|h_k(\xx, t)\right| = \sum_{k=0}^{n} \sign(h_k(\xx,t)) \cdot h_k(\xx, t)
\end{equation}
is the \emph{weighted Lebesgue function}. So, the norm of the weighted Lagrange interpolation is governed by the ordinary maximum norm of $L_n(w,\xx,\cdot)$ on $\II$.

In another interpretation, we can consider the weights "to be built in" the interpolation process. That is, then we consider interpolating all weighted functions $g:=fw$ directly in the weighted space by the "weighted interpolants"
\[
\Ln^{(w)}(g):={\Ln}^{(C_w\to \Wn)} (g):=\sum_{k=0}^{n} g(x_k) h_k.
\]
Either way, it is clear that for any value $t \in \II$ the maximum of the expression is obtained with some appropriate choices of signs of $f(x_k)w(x_k)=g(x_k)=\pm 1$. More precisely, the Lebesgue function $L_n(w,\xx,t) = \sum_{k=0}^{n} \vk(t) h_k(t)$ will be obtained with
$$
\vk(t):=\sign(h_k(\xx,t) = \begin{cases} (-1)^{k-i} \qquad & \textrm{if} \quad t\in \intt I_i, ~ 1 \le  i \le k \le n
\\ (-1)^{k+1-i} & \textrm{if} \quad t\in \intt I_i, ~0 \le  k < i \le n+1
\\ 0 & \textrm{if} \quad t=x_k, \quad k=0,1,\ldots,n
\end{cases},
$$
where for $i=1,\ldots,n+1$ we put $I_i:=I_i(\xx):=[x_{i-1},x_i]$ (with interpreting $I_{n+1}(\xx):=[x_n,\infty)$).

Using the notation
\begin{equation}\label{epsilonki}
\vki=(-1)^{k+1-i+\chi_{i\le k}}\quad (0\le k \le n, ~1 \le i \le n+1)
\end{equation}
and restricting $L_n$ to the subinterval $I_i$ we get a weighted polynomial
\begin{equation}\label{Pi}
P_i(\xx,\cdot):=P_i:=L_n|_{I_i}= \sum_{k=0}^n \vki h_k \in \WW_n \quad (1\le i\le n+1).
\end{equation}

These polynomials match with $L_n$ on the respective $I_i$ only, but their thorough analysis is necessary also on the whole real line. We read from the above in particular that
\[
\|\Ln\|_w = \max_{\II } L_n(w,\xx,\cdot) =\max_{i=1,\ldots,n+1} m_i(\xx)
\]
where
\[
m_i(\xx):=\max_{I_i(\xx)} P_i(\xx,\cdot) = \max_{I_i(\xx)} L_n(w,\xx,\cdot) .
\]
So the quest for the optimal, i.e., minimal norm of the Lagrange interpolation operator boils down to the task of minimizing the maximum of these interval maxima with an appropriate choice of the node system vector $\xx$.

Now the adapted version of the Bernstein Conjecture \cite{Bernstein} is that this optimization happens exactly when these interval maxima are all equal, which we will term as \emph{equioscillation property}, or that the interval maxima \emph{equioscillate}. Similarly, the appropriate form of the Erd\H os Conjecture \cite{Erdos-1, Erdos-2} states that if $\xx$ is an optimal, i.e., equioscillating node system, then the interval maxima $m_i(\yy)$, belonging to any other node system $\yy$, will ``sandwich'' the optimal Lagrange interpolation norm $m_1(\xx)=\dots=m_{n+1}(\xx)$; that is, there exist indices $i, j$ such that both $m_i(\yy)<m_i(\xx)$ and $m_j(\yy)>m_j(\xx)$ hold.

For the classical case of unweighted interpolation the breakthrough was achieved by Kilgore, who showed the Bernstein Conjecture \cite{Kilgore}, and came close to the Erd\H os Conjecture as well, although this was recorded immediately following him by de Boor and Pinkus \cite{CBoorPinkus}. Moreover, the paper of de Boor and Pinkus contained several important further advances, one being the proof of an even stronger property, which we will call here \emph{intertwining}. This property says that \emph{for any two node systems} $\xx \ne \yy$ we necessarily have two indices $i,j$ with $m_i(\yy)<m_i(\xx)$ and $m_j(\yy)>m_j(\xx)$, brutally generalizing the Erd\H os Conjecture, which formulated this only when one of the node systems (i.e., $\xx$) is the optimal one.

In the following decades attempts were made to extend the solutions to various spaces of weighted polynomial interpolation or interpolation into spaces of various more general features like incomplete polynomials etc., see \cite{Kilgore-84, Kilgore1985, Kilgore-87, K-AMH}. These proved to be difficult for the original argument used several ways the special feautres of algebraic polynomials in the form of they forming a Chebyshev-Haar System\footnote{In the literature, both names are used. By now we know that Chebyshev was the first to define the notion, but did not publish it. Later Markov wrote it down in his seminal paper \cite{Markov} first in Russian, but the rich content of his work became world-wide known only after it appeared also in German later. Haar independently, but somewhat later introduced the notion, too, in a German paper \cite{Haar}.}, or admitting the celebrated Markov theorem about the interlacing of roots of oscillating polynomials inherited by the roots of their derivatives. In all these degree calculations and cancellation by factors played key roles. Moreover, all these general investigations were based on the same approach in that the key step, ever since Kilgore and de Boor-Pinkus, was the nonsingularity of the derivative matrices
\begin{equation}\label{Ak}
\left[ \frac{\partial m_i}{\partial x_j}\right]_{i=1, i\ne k, j=1}^{n+1, n}.
\end{equation}
To prove this nonsingularity is a hard task, occupying the larger portion of all those papers, and from that the final results follow relatively easily. Note that already the differentiability of the interval maxima functions $m_i(\xx)$ with respect to the nodes is not obvious. In the classical case, and more generally for Chebyshev-Haar systems containing the constant functions, this was obtained via an implicit function argument by Kilgore and Cheney, making use the fact that for those systems $P_i(\xx,t)>1$ on $\intt I_i$ -- whereas $P_i(\xx,x_{i-1})=P_i(\xx,x_i)=1$ -- ensuring that any maximum point $z_i$ of $P_i$ on $I_i$ satisfies $(P_i)'_t(\xx,z_i)=0$. The point is that this works only if a maximum point is in the interior, for a maximum place located at the endpoint of its interval does not necessarily admit a vanishing derivative.

The central role of this nonsingularity property was elegantly analysed by Shi, who proved by an innovative linear programming approach in \cite{Shi}, that in a rather general context once one has the nonsingularity property (and a much easier technical assumption on so-called properness, see later), all statements, including the strongest intertwining property, follow directly.

Kilgore in his later works formulated several results on various spaces, stating for them the validity of the Bernstein and Erd\H os Conjectures. In particular, the respective claim for the exponentially weighted interpolation on the halfline was stated as Theorem 1 in \cite{K-AMH}. In these papers Kilgore standardized the argumentation for the classical case, always attacking via the nonsingularity statement, which can now be called ``the standard method'', in particular in view of the analysis of Shi, too.

However, it was found in \cite{SzAMH} that the proof arguments relied on the assumption that the maxima are always interior to their intervals, whereas counterexamples demonstrate that this may well fail. Interval maxima may get to the endpoint, and in the case $z_{n+1}=x_{n}$, it may occur that $(P_{n+1})'_t(x_n)<0$; moreover, a full column of the above derivative matrix \eqref{Ak} may become just ${\bf 0}$. In \cite{SzAMH} counterexamples were described to the nonsingularity and to the intertwining properties, too, so that clearly the standard proof cannot be repaired.

Thus for exponentially weighted interpolation we face the fact that the known method of proof, relying on nonsingularity of the said derivative matrices, cannot go through. Nevertheless, we prove here the Bernstein and Erd\H os Conjectures in this setup. Postponing technicalities, we point to the forthcoming Theorem \ref{th:minimax} and Corollary \ref{cor:sandwich}, where exactly these assertions are formulated.

However, the paper contains other findings, too, which might be as crucial for other generalized situations as nonsingularity is where it applies. Note, e.g., that de Boor and Pinkus considered the main result of their paper \cite{CBoorPinkus} their Theorem 2, which will be brought over to our situation in Theorem \ref{t:homeom}. When nonsingularity is not available, this serves as a ``next best thing'', and is also analysed already by Shi in \cite{Shi}. Our approach via this ``homeomorphism property of the difference mapping'' was inspired by successful application of the approach in other contexts, see for example, \cite{TLMS2018} and \cite{Homeo}. We are certain that it will play a crucial role if further settings will be treated.

We should record here that our approach to the equioscillation and minimax questions is also rooted in another, independent and nice discovery of Fenton, who formulated for sums of translates of concave kernels a general equioscillation theorem \cite{Fenton}. Developing his idea with the work of Kilgore \cite{Kilgore} and de Boor-Pinkus \cite{CBoorPinkus} in mind, we could prove very general minimax and equioscillation results in contexts where nonsingularity was not available \cite{Ural, Sbornik, JMAA}. In the case of periodic functions, there is still another predecessor of the approach, due to Kendall, Hardin and Saff \cite{HardinKendallSaff}, and further developed in \cite{TLMS2018}. However, in all these works nonsingularity could be circumvented because we had other strong structural properties available. One was concavity of the appearing kernels, and it was also important that the summands in the sum corresponding to the $|h_k(\xx,t)|$ in the Lebesgue function \eqref{Lebesguefunction} here, all were univariate, depending on just the difference $t-x_k$ with the respective $x_k$. This allowed to apply certain univariate techniques, which of course cannot be used here.

This is the first proof of the conjectures of Bernstein and Erd\H os in a setting where nonsingularity is not available. Therefore, we hope that the methods are capable of extending to other situations, where the same obstacle -- possible singularity, i.e., attaining the interval maxima at endpoints -- excludes the standard approach. This motivates us in formulating several auxiliary results in a more general setting. This was already started by Kilgore \cite{Kilgore1985}, who in particular proved an elegant proposition about linear independence of certain functions with interlacing zeros. We will make good use of this elegant proposition as well as the Markov-type inheritance theorems of Milev and Naidenov \cite{MN} for exponentially weighted polynomials.

The possibility of generalizing to further settings depends on various ingredients including notably an appropriate Markov inheritance lemma, Chebyshev-Haar property of systems of base polynomials as well as the system of their derivatives, and of course the successful implementations of the various steps made here. Therefore, further extensions are not at all obvious, and need to be postponed to other work.

The structure of the paper is as follows. Section \ref{sec:ECHS} is devoted to Extended Chebyshev–Haar systems. We recall some general lemmas and present some extensions of them. The section concludes with a fine description of the sets $X$ and $X^c$ of regular resp. irregular node systems (see Definition \ref{def:X}), which will be found crucial for the later analysis. In Section \ref{sec:zeros} we locate the zeros of the various polynomials $P_i$ from \eqref{Pi}, and also of their derivatives $(P_i)'_t$, and then prove interlacing of zeros of these derivatives. In Section \ref{sec:maximum} we discuss location, uniqueness and continuity of the maximum points $z_i:=z_i(\xx)$ of $P_i$ and also their counterpart, the critical points $w_i:=w_i(\xx)$, which may be outside $I_i$. We prove that the maxima $z_i(\xx)$ ($i=1,\ldots,n+1$) exist uniquely, and, moreover, they are Lipschitz continuous functions of the node system $\xx$. Then we derive that the local maxima functions $m_i(\xx)$ are continuously differentiable, and  calculate the value and the sign of their partial derivatives in several cases. Section \ref{sec:linearindependence} establishes linear independence of the rows of the derivative matrices \eqref{Ak} at least on the set $X$ of regular nodes. However, $X$ does not contain all the node systems and this will be seen to mean the appearance of singularity, too. Section \ref{sec:connected} proves connectedness of the set $X$, and Section \ref{sec:proper} shows that the difference mapping\footnote{For the precise definition see \eqref{gamma}.} $\Gamma$ is proper. The main result on the Bernstein equioscillation theorem follows only in Section \ref{sec:mnimax} as Theorem \ref{th:minimax}. The homeomorphism theorem for $\Gamma$ is contained in Section \ref{sec:homeomorphism}, and in Section \ref{sec:majorization} we derive the Erd\H os sandwich property and more.

\section{General lemmas for Extended Chebyshev-Haar Systems}\label{sec:ECHS}

A vector space of functions $\WW:=\Wn$ is a rank $n+1$ Extended Chebyshev-Haar System -- to be shortened as ECHS henceforth --, if its dimension is $n+1$, and the only function $f$ from the family and having $n+1$ zeros -- counted according to multiplicities -- is the identically zero function $\Null$. In this part of the preliminary analysis we will consider an almost arbitrary ECHS, which, however, will be assumed to have suitable differentiability properties, as need be.

Most of the time we will assume $\Wn \subset C^2(\II)$ and will count multiplicities of zeros up until 3: so for $f \in \Wn$ and $z$ a zero of it the multiplicity of the zero is 1 if $f'(z) \ne 0$, is 2 if $f(z)=f'(z)=0$, but $f''(z) \ne 0$, and is 3 if $f(z)=f'(z)=f''(z)=0$. Our notion of ECHS will refer to this zero multiplicity count -- for a similar convention see, e.g., \cite{MN}. (In general, when we do not want to assume higher order differentiability of the functions of our system, then we can neither interpret higher order multiplicities.)

Our model study in this direction is the work of Kilgore and Cheney \cite{Kilgore-Cheney}. However, our main concern here will be the removal from the set of conditions the standing assumption of \cite{Kilgore-Cheney} that $\One \in \WW$. As is exposed in \cite{SzAMH}, this innocent-looking assumption is a very crucial one, lacking of which causing collapse of the ``standard approach'' via nonsingularity of \eqref{Ak} to the Bernstein and Erd\H os Conjectures. So, here our aim is to save what can be saved from the analysis of \cite{Kilgore-Cheney}, and to generalise, extend and push further them towards the particular needs in our approach. The results obtained here will be essentially used throughout, from results on derivatives of the interval maxima functions $m_i$ in Section \ref{sec:maximum} to connectedness of the ``regular set'' $X$ and continuity of the ``separator function'' $\phi$ in Section \ref{sec:connected}.

We will always consider the maximum norm of functions. Note that if the weights are ``built in'' the functions of the ECHS, then weighted polynomial interpolation (with the second interpretation of it) is a particular case. Indeed, for any node system $\xx$ there exist unique interpolatory basis functions $h_k$ with $h_k(x_j)=\delta_{j,k}$, the Lebesgue function can be constructed as discussed, and the ``generalised polynomials'' $P_i$, as well as the signs $\ve_{k,i}$ are defined the same way.

The below lemmas generalise some known nice formulas. According to our study, these can be traced back to the work of Kilgore and Cheney \cite{Kilgore-Cheney}. However, Kilgore and Cheney use that ${\bf 1}$ belongs to the given ECHS, which we want to avoid. We give below in Lemma \ref{l:magic} the properly adjusted version. These lemmas will have multiple use in our work. One direction culminates in Corollary \ref{cor:magic}. It is mentioned without proof in \cite{K-AMH}, where it is called only ``an observation''. However downplayed in \cite{K-AMH}, we found it a miraculous formula, which we will use extensively.

To start with, we generalise Lemma 2 from \cite{Kilgore-Cheney}. Kilgore and Cheney showed that if $\xx, \xx'\in S$ are given by $x'_k=x_k$ for each $k\in \{1,\ldots,n\}\backslash\{j\}$ and $x_{j-1} < x_j' < x_j$, then
\begin{align*}\label{K-Ch_diff}
P_j(\xx,t)-P_j(\xx',t)&=(P_j(\xx,x_j')-1)h_j(\xx',t),
\\
P_{j-1}(\xx,t)-P_{j-1}(\xx',t)&=(P_{j-1}(\xx,x_j')-1)h_j(\xx',t).
\end{align*}
Here we show that a similar formula holds true for any $P_i$ even if $i$ is not necessarily equal to $j$ or $j-1$. It is of importance that $\One \in \WW$ is not postulated for this.

\begin{lemma}\label{l:magic-I}
Let $\Wn \subset C^1(\II)$ be an ECHS of continuously differentiable functions on $\II$. Let $\xx, \xx'\in S$ be given with $x'_k=x_k$ for each $k\in \{1,\ldots,n\}$ with $k \ne j$, and satisfying $x_{j-1} < x_j' < x_j$.
Then, for every $1 \le j\le n$ and $1\le i\le n+1$ we have
\begin{equation}\label{K-Ch_diff_j}
P_i(\xx,t)-P_i(\xx',t)=(P_i(\xx,x_j')-\varepsilon_{j,i})h_j(\xx',t).
\end{equation}

\end{lemma}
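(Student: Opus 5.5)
Both sides of \eqref{K-Ch_diff_j} are elements of $\Wn$. Indeed, $P_i(\xx,\cdot)$ and $P_i(\xx',\cdot)$ are linear combinations of the basis functions $h_k(\xx,\cdot)$ and $h_k(\xx',\cdot)$, and $h_j(\xx',\cdot)\in\Wn$ as well. Since $\Wn$ is an ECHS of dimension $n+1$, an element of $\Wn$ is determined by its values at any $n+1$ distinct points. I will therefore compare the two sides at the $n+1$ distinct points of the node system $\xx'$, namely $x_0,x_1,\ldots,x_{j-1},x_j',x_{j+1},\ldots,x_n$. Any interpolation on these nodes is unique, because $h_k(\xx',\cdot)$ form the interpolatory basis for $\xx'$.

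The key observation is that the sign vector $\varepsilon_{k,i}$ from \eqref{epsilonki} depends only on the indices $k,i$ and not on the actual positions of the nodes. Hence
\[
P_i(\xx,\cdot)=\sum_{k}\varepsilon_{k,i}h_k(\xx,\cdot),\qquad P_i(\xx',\cdot)=\sum_k \varepsilon_{k,i}h_k(\xx',\cdot),
\]
with the same coefficients. Consequently, at every common node $x_k=x_k'$ with $k\ne j$ we get
\[
P_i(\xx,x_k)=\varepsilon_{k,i}=P_i(\xx',x_k),
\]
so the left-hand side vanishes there. The right-hand side also vanishes at these points, since $h_j(\xx',x_k')=0$ for $k\ne j$.

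It remains to check the point $x_j'$. There $P_i(\xx',x_j')=\varepsilon_{j,i}$ by interpolation, so the left-hand side equals $P_i(\xx,x_j')-\varepsilon_{j,i}$. On the right-hand side, $h_j(\xx',x_j')=1$, so it equals the same quantity.

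Thus the two sides agree at $n+1$ distinct points. Their difference is an element of $\Wn$ with $n+1$ zeros, so by the ECHS property it is $\Null$. This proves \eqref{K-Ch_diff_j} for every $t$, and not only on $I_i$.

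I do not expect a real obstacle. The only points needing care are the conventions: $x_0=0$ is fixed and is not among the perturbed nodes, $j\ge1$, and $\varepsilon_{j,i}$ is used at index $j$ even though $h_j(\xx,x_j')$ need not have sign $\varepsilon_{j,i}$. None of these affects the argument. The hypothesis $x_{j-1}<x_j'<x_j$ is needed only to guarantee that $\xx'\in S$, so that the nodes are distinct and the basis $h_k(\xx',\cdot)$ exists. The $C^1$ assumption plays no role here.
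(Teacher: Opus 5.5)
Your proof is correct and is essentially the paper's argument. The paper observes that the difference vanishes at the $n$ common nodes, concludes from the Haar property that it is a constant multiple of $h_j(\xx',\cdot)$, and fixes the constant by evaluating at $x_j'$; you instead check agreement of both sides at all $n+1$ nodes of $\xx'$ directly, which rests on the same unisolvence fact.
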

\begin{proof} Obviously, $P_i(\xx,t)-P_i(\xx',t)$ is an element of the ECHS $\WW_{n}$ and it takes 0 at $n$ points (at the nodes $x_k$, $k=0,\ldots,n$, $k\not=j$). Since $h_j(\xx',t)$ has zeroes at the same points, the  Chebyshev-Haar property of $\WW_{n}$ guarantees, that there is a constant $c$ such that
\[
P_i(\xx,t)-P_i(\xx',t)=c\cdot h_j(\xx',t).
\]
Moreover, for the proper constant factor we have that
\[
c=\restr{\frac{P_i(\xx,t)-P_i(\xx',t)}{h_j(\xx',t)}}{t=x'_j}=P_i(\xx,x_j')-\varepsilon_{j,i},
\]
furnishing \eqref{K-Ch_diff_j}.
\end{proof}

% Namely, we are going to prove that

\begin{lemma}\label{l:magic}
Let $\Wn \subset C^1(\II)$ be an ECHS of continuously differentiable functions on $\II$. Let $\xx \in S$. Then, for every $1 \le j\le n$ and $1\le i\le n+1$ we have
\begin{equation}\label{magic-middleform}
\frac{\partial P_i}{\partial x_j}(\xx,t)= - h_j(\xx,t) (P_i)'_t(\xx,x_j) \qquad (t \in \RR).
\end{equation}
\end{lemma}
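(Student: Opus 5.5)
The plan is to obtain \eqref{magic-middleform} as a limit of the finite-difference identity of Lemma \ref{l:magic-I}. Fix $j$ and $i$, and for small $s>0$ let $\xx'=\xx'(s)$ be the node system that agrees with $\xx$ except that $x'_j=x_j-s$. For $s$ small we have $x_{j-1}<x'_j<x_j$. Lemma \ref{l:magic-I} then gives
\[
\frac{P_i(\xx,t)-P_i(\xx'(s),t)}{s}=\frac{P_i(\xx,x_j-s)-\varepsilon_{j,i}}{s}\, h_j(\xx'(s),t).
\]
Since $h_j(\xx,x_j)=1$ and the sign $\varepsilon_{j,i}$ does not depend on $\xx$, we have $P_i(\xx,x_j)=\sum_k \varepsilon_{k,i}h_k(\xx,x_j)=\varepsilon_{j,i}$. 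The numerator on the right is therefore $P_i(\xx,x_j-s)-P_i(\xx,x_j)$, and dividing by $s$ gives a quotient that tends to $-(P_i)'_t(\xx,x_j)$ as $s\to0^+$, because $P_i(\xx,\cdot)\in\Wn\subset C^1$.

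Next I need $h_j(\xx'(s),t)\to h_j(\xx,t)$. I would prove this by writing the interpolation basis through Cramer's rule, $h_k(\xx,t)=\det V(\xx;t,k)/\det V(\xx)$. Here $V(\xx)=[\varphi_m(x_l)]$ for a fixed basis $\varphi_0,\dots,\varphi_n$ of $\Wn$, and $V(\xx;t,k)$ is $V(\xx)$ with the $k$-th node replaced by $t$. The determinant $\det V(\xx)$ is nonzero on $S$ by the Haar property, so each $h_k(\xx,t)$ is continuous, indeed $C^1$, jointly in $(\xx,t)$. This gives the left-sided partial derivative
\[
\lim_{s\to0^+}\frac{P_i(\xx,t)-P_i(\xx'(s),t)}{s}=-h_j(\xx,t)(P_i)'_t(\xx,x_j).
\]

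For the right-sided derivative I would apply Lemma \ref{l:magic-I} with the roles swapped. Take $\xx''$ with $x''_j=x_j+s<x_{j+1}$ (if $j=n$ there is no upper constraint). Now $\xx$ plays the part of the perturbed system, with $x''_{j-1}<x_j<x''_j$. This yields
\[
P_i(\xx'',t)-P_i(\xx,t)=\bigl(P_i(\xx'',x_j)-\varepsilon_{j,i}\bigr)h_j(\xx,t),
\]
and
\[
P_i(\xx'',x_j)-\varepsilon_{j,i}=P_i(\xx'',x_j)-P_i(\xx'',x_j+s)=-s\,(P_i)'_t(\xx'',\xi_s)
\]
for some $\xi_s$ between $x_j$ and $x_j+s$, by the mean value theorem. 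Dividing by $s$ and letting $s\to0^+$ needs $(P_i)'_t(\xx'',\xi_s)\to(P_i)'_t(\xx,x_j)$, that is, joint continuity of $(\xx,t)\mapsto \partial_t h_k(\xx,t)$. This follows from the same determinant formula, since only $t$-derivatives of the $\varphi_m$ enter. The two one-sided limits agree, so the partial derivative exists and equals the claimed expression. Since $t$ is arbitrary in the domain of the $\varphi_m$, the identity holds for all $t$.

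The main obstacle is this joint-continuity step, which is needed to pass to the limit in the right-sided case. It is handled by the Cramer-rule representation together with nonvanishing of the Haar determinant on $S$.
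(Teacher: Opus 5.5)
Your proof is correct and follows the paper's route: the paper also obtains the formula by letting $x_j'\to x_j$ in Lemma~\ref{l:magic-I}, using $P_i(\xx,x_j)=\varepsilon_{j,i}$ and the continuity of $h_j$ in the nodes. The paper takes only the left-sided limit $x_j'\to x_j-$, tacitly relying on existence of the partial derivative, which follows from the Cramer-rule representation given after Corollary~\ref{l:magic-derivalt}; your right-sided argument (roles of $\xx$ and $\xx''$ swapped, mean value theorem, joint continuity of $(P_i)'_t$) fills in that step rather than departing from the paper.
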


\begin{proof} Taking limits $x_j'\to x_j-$ in \eqref{K-Ch_diff_j} we find
\begin{align*}\label{magic-middleform}\notag
-\frac{\partial P_i}{\partial x_j}(\xx,t)&=\lim\limits_{x_j'\to x_j} \frac{P_i(\xx,t)-P_i(\xx',t)}{x_j'-x_j}=\lim\limits_{x_j'\to x_j} \frac{(P_i(\xx,x_j')-\varepsilon_{j,i})h_j(\xx',t)}{x_j'-x_j}
\\&
=\lim\limits_{x_j'\to x_j} \frac{P_i(\xx,x_j')-P_i(\xx,x_j)}{x_j'-x_j}\cdot h_j(\xx,t) =(P_i)'_t(\xx, x_j)\cdot h_j(\xx,t),
\end{align*}
proving \eqref{magic-middleform}.
\end{proof}

\begin{corollary}\label{l:magic-derivalt}
Let $\Wn \subset C^2(\II)$ be an ECHS of twice continuously differentiable functions on $\II$. Let $\xx \in S$. Then, for every $1 \le j\le n$ and $1\le i\le n+1$ we have
\begin{equation}\label{magic-derivative}
\frac{\partial (P_i)'_t}{\partial x_j}(\xx,t)= - (h_j)'_t(\xx,t) (P_i)'_t(\xx,x_j) \qquad (t \in \RR).
\end{equation}
\end{corollary}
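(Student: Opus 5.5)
The plan is to differentiate the identity of Lemma \ref{l:magic} with respect to $t$. For fixed $\xx\in S$ the identity
\[
\frac{\partial P_i}{\partial x_j}(\xx,t)= - h_j(\xx,t)\,(P_i)'_t(\xx,x_j)
\]
holds for all $t$. The factor $(P_i)'_t(\xx,x_j)$ does not depend on $t$, so differentiating the right-hand side in $t$ gives $-(h_j)'_t(\xx,t)\,(P_i)'_t(\xx,x_j)$ at once. The only thing to justify is that the left-hand side, differentiated in $t$, equals $\frac{\partial (P_i)'_t}{\partial x_j}$. In other words, we need $\partial_t\partial_{x_j}P_i=\partial_{x_j}\partial_t P_i$.

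To secure this exchange of derivatives without a general Schwarz-type theorem, I would instead redo the limit argument of Lemma \ref{l:magic} at the level of $t$-derivatives. Differentiate \eqref{K-Ch_diff_j} of Lemma \ref{l:magic-I} in $t$. This is legitimate because all functions involved lie in $\Wn\subset C^2$ and the constant $P_i(\xx,x_j')-\ve_{j,i}$ is independent of $t$. It gives
\[
(P_i)'_t(\xx,t)-(P_i)'_t(\xx',t)=\bigl(P_i(\xx,x_j')-\ve_{j,i}\bigr)(h_j)'_t(\xx',t).
\]
Now divide by $x_j'-x_j$ and let $x_j'\to x_j-$. Since $P_i(\xx,x_j)=\ve_{j,i}$, the difference quotient of the constant tends to $(P_i)'_t(\xx,x_j)$, using $P_i(\xx,\cdot)\in C^1$. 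One also needs $(h_j)'_t(\xx',t)\to(h_j)'_t(\xx,t)$. This holds because the interpolatory basis depends continuously on the nodes in the $C^1$ topology of the finite-dimensional space $\Wn$: the coefficients of $h_j(\xx',\cdot)$ in a fixed basis solve a nonsingular linear system whose entries depend continuously on $\xx'$, the system being nonsingular by the Chebyshev–Haar property. The same argument with $x_j'\to x_j+$, using the analogue of Lemma \ref{l:magic-I} with the roles of $\xx,\xx'$ swapped, gives the two-sided derivative. This yields \eqref{magic-derivative}.

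The main point needing care is the continuity of $(h_j)'_t(\xx',t)$ in $\xx'$, which the finite-dimensional linear-algebra argument above handles. Beyond that, the corollary is essentially a direct consequence of Lemma \ref{l:magic}.
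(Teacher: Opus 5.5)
Your argument is correct and essentially follows the paper. The paper gives no separate proof and treats the statement as an immediate consequence of Lemma \ref{l:magic}, i.e.\ differentiate \eqref{magic-middleform} in $t$; interchanging $\partial_t$ and $\partial_{x_j}$ is harmless there, because $P_i(\xx,t)$ is a combination of fixed basis functions of $t$ with coefficients differentiable in $\xx$. You instead rerun the limit of Lemma \ref{l:magic} on the $t$-differentiated identity \eqref{K-Ch_diff_j}. This avoids that interchange and also supplies the right-sided limit, which the paper's proof of Lemma \ref{l:magic} leaves implicit.

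For $x_j'\to x_j+$ you need not swap roles, since the proof of Lemma \ref{l:magic-I} never uses $x_j'<x_j$. If you do swap, the constant becomes $P_i(\xx',x_j)-\ve_{j,i}=P_i(\xx',x_j)-P_i(\xx',x_j')$. Its difference quotient then converges only by joint continuity of $(P_i)'_t$ in $(\xx,t)$, which your Cramer's-rule argument also provides.
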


For the next argument we need the description of the zeros of the $n+1$st interval Lebesgue function, $P_{n+1}(\xx,t)$ as well as the description of the zeros of its derivative $(P_{n+1})'_t(\xx,t)$.

Consider the ECHS $\WW$ and a fixed basis $\ff_0,\ldots,\ff_n$ of it. Each $h_j(\xx,\cdot)$ is a linear combination $\sum_{i=0}^n \alpha_i^j \ff_i=h_j(\xx,\cdot)$ of the fixed basis functions, moreover, the respective coefficients from the solution vector $\ba^j=(\alpha_0^j,\ldots,\alpha_n^j)^T$ can be obtained from the inhomogeneous linear equation
\[
\begin{bmatrix} \ff_0(x_0) & \ldots & \ff_n(x_0)
\\ \vdots & \ddots & \vdots
\\ \ff_0(x_i) & \ldots & \ff_n(x_i)
\\ \vdots & \ddots & \vdots
\\ \ff_0(x_n) & \ldots & \ff_n(x_n)
\end{bmatrix}
 \cdot \ba^j = \ee_j := \begin{bmatrix} 0 \\ \vdots \\ 0 \\ 1 \\ 0 \\ \vdots \\0 \end{bmatrix}
\]
Since $\WW$ is an ECHS, and $\{\ff_i\}_{i=0}^n$ is a basis, the alternating determinant of the left-hand side matrix does never vanish for $\xx \in S$. It follows that the system has a unique solution $\ba^j$, expressed by Cramer's rule, and therefore depending continuously on $\xx$. That is, the functions $h_j( \xx,\cdot)$ depend continuously on $\xx \in S$. Moreover, the derivatives $(h_j)_t'(\xx,\cdot)=\sum_{i=0}^n \alpha_i^j(\xx) \ff'_i$ depend continuously on $\xx \in S$, too. So we see that $P_i(\xx,t)$ and also $(P_i)'_t(\xx,t)$ depend continuously on $\xx \in S$.

Here we will need a preliminary analysis of the location of zeros in particular of $P_{n+1}$ and $P_{n+1}'$. $P_{n+1}$ attains $\pm 1$ alternatively on $x_i$, taking $+1$ at $x_n$, and then it attains $0$ also at $x_{n+1}=b=\infty$. So, it has $n$ zeroes $y_i \in (x_{i-1},x_i)$ for $i=1,\ldots,n$, and by the ECHS property all these are simple and there can be no more. Therefore, writing $y_{n+1}=\infty$, the derivative $P_{n+1}'$ has zeroes $w_i \in (y_i,y_{i+1})$ for $i=1,\ldots,n$. Assuming here that also the derivative system $\WW'$ of functions from $\WW$ is an ECHS, too this means that also the derivative has no more zeroes, and all these are simple.

Actually, here we can refer to Milev and Naidenov \cite{MN}, who introduced the following.

\begin{definition}\label{def:prop-P} An ECHS $\WW$ satisfies Property (P) if there exist numbers $\de_0, \de_n \in \{0,1\}$ such that, for every oscillating generalized polynomial $f$ of the system, its derivative $f'$ has no zeros other than the $n-1$ zeros guaranteed by Rolle's theorem between consecutive roots of $f$, together with exactly $\de_0$ zero in the left outer interval and exactly $\de_n$ zero in the right outer interval.
\end{definition}

So, instead of the assumption that $\WW'$ is an ECHS, we can as well say only that $\WW$ has Property (P) with $\delta_0=0$ and $\delta_n=1$. Both versions are well-known and easy for the system of exponentially weighted polynomials.

\begin{definition}[Regular set]\label{def:X} We define the \emph{regular set} $X\subset S$ as
\[
X:=\{ \xx \in S : m_{n+1}(\xx) >1 \}
\]
\end{definition}

It is clear from the above that $m_{n+1}(\xx)$ is a continuous function of $\xx \in S$, hence the level set $X$ is open. Consequently, $X^c:=S \setminus X = \{ \xx \in S : m_{n+1}(\xx) =1 \}$ is (relatively) closed in $S$.

As the interval $(y_{n},\infty)$ contains only one and simple zero $w$ of the derivative of $P'_{n+1}$, whereas $P_{n+1}$ vanishes at the endpoints and is positive in the interval, we necessarily have that $P_{n+1}$ is strictly increasing on $[y_n,w]$ (from 0 to $P_{n+1}(\xx,w)$), and is strictly decreasing on $[w,\infty)$ (from $P_{n+1}(\xx,w)$ to 0). In other words, $P_{n+1}$ is always strictly quasi-concave on $(y_n,\infty)$. So, $P_{n+1}(\xx,w)$ is a strict global maximum point, with its value $P_{n+1}(\xx,w)\ge P_{n+1}(\xx,x_n)=1$. This also means that on $I_{n+1}(\xx)=[x_n,\infty)\subset [y_n,\infty]$, too, there is a unique strict maximum point $z=z_{n+1}(\xx)$, which is equal to $w$ if $w\ge x_n$, and is otherwise the left endpoint $x_n$, for then $P_{n+1}$ is decreasing on $I_{n+1}$.

So if $w > x_n$, then $m_{n+1}(\xx)=P_{n+1}(\xx,w)> P_{n+1}(\xx,x_n)=1$, as $P_{n+1}$ is strictly increasing on $[x_n,w]$; but if $w\le x_n$, then $z=x_n$ and $m_{n+1}(\xx)=P_{n+1}(\xx,x_n)=1$. So we get the following.

\begin{proposition}\label{prop:equiv} Let $\WW \subset C^1(\II)$ be an ECHS satisfying Property (P) with $\de_n=1$ (and $\de_0 \in \{0,1\}$ arbitrary).
%where $\de_0=0$ and $\de_n=1$,
Then the following are equivalent.
\begin{enumerate}
\item $\xx \in X$;
\item The unique zero $w$ of $(P_{n+1})'_t(\xx,\cdot)$ on $(y_n,\infty)$ satisfies $w>x_n$;
\item The unique maximum point $z=z_{n+1}(\xx)$ of $P_{n+1}(\xx,\cdot)$ on $I_{n+1}$ lies in $\intt I_{n+1}$;
\item $z_{n+1}=w>x_n$;
\item $(P_{n+1})'_t(\xx,x_n)> 0$.
\end{enumerate}
\end{proposition}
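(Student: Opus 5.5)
The proof will rest on the structural description of $P_{n+1}(\xx,\cdot)$ on $[y_n,\infty)$ derived just before the statement. First I will pin down the zeros. $P_{n+1}$ alternates $\pm1$ at the nodes and takes $+1$ at $x_n$, so it has exactly $n$ simple zeros $y_i\in(x_{i-1},x_i)$ by the ECHS property. Property (P) with $\de_n=1$ then shows that $(P_{n+1})'_t$ has exactly one zero $w$ in $(y_n,\infty)$, and that zero is simple. Hence $(P_{n+1})'_t>0$ on $(y_n,w)$ and $(P_{n+1})'_t<0$ on $(w,\infty)$. The sign on the left part follows because $P_{n+1}(\xx,y_n)=0<1=P_{n+1}(\xx,x_n)$ with $x_n>y_n$, so $P_{n+1}$ must increase somewhere right of $y_n$, and the derivative cannot change sign before $w$. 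The sign on the right part follows from $P_{n+1}>0$ on $(y_n,\infty)$ together with $P_{n+1}\to0$ at $\infty$ (the weight decays) and the absence of any further critical point. So $P_{n+1}$ is strictly increasing on $[y_n,w]$ and strictly decreasing on $[w,\infty)$.

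With this monotonicity I will close the cycle (2)$\Rightarrow$(4)$\Rightarrow$(3)$\Rightarrow$(1)$\Rightarrow$(2), and treat (5) separately.

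For (2)$\Rightarrow$(4): if $w>x_n$, then on $I_{n+1}=[x_n,\infty)$ the function increases up to $w$ and decreases after it, so the unique maximum point is $z_{n+1}=w>x_n$.

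For (4)$\Rightarrow$(3): this is immediate.

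For (3)$\Rightarrow$(1): if $z\in\intt I_{n+1}$, then $z$ is an interior critical point lying in $(y_n,\infty)$, so $z=w>x_n$. Strict monotonicity on $[x_n,w]$ then gives $m_{n+1}=P_{n+1}(\xx,w)>P_{n+1}(\xx,x_n)=1$.

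For (1)$\Rightarrow$(2), I argue by contraposition. If $w\le x_n$, then $P_{n+1}$ is strictly decreasing on $[x_n,\infty)$, so $m_{n+1}(\xx)=P_{n+1}(\xx,x_n)=1$ and $\xx\notin X$.

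For (5)$\Leftrightarrow$(2), I use the sign pattern of the derivative, noting that $x_n\in(y_n,\infty)$. If $w>x_n$, then $x_n\in(y_n,w)$, where $(P_{n+1})'_t>0$. If $w\le x_n$, then either $x_n=w$, where the derivative vanishes, or $x_n>w$, where it is negative. So $(P_{n+1})'_t(\xx,x_n)>0$ exactly when $w>x_n$.

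The only delicate step is justifying the sign pattern of $(P_{n+1})'_t$ on $(y_n,\infty)$, and in particular the behaviour at infinity, which rules out a monotone increase to a positive limit. I would handle it as follows. The limit of $P_{n+1}$ at $\infty$ is $0$ (the paper's convention $x_{n+1}=\infty$, with $P_{n+1}$ vanishing there). $P_{n+1}$ is positive on $(y_n,\infty)$, has the value $1$ at $x_n$, and is $0$ at both ends of $(y_n,\infty)$. It therefore attains an interior maximum, which must be a critical point, and by uniqueness that critical point is $w$. Simplicity of $w$ forces the derivative to change sign there, which yields the claimed sign pattern.
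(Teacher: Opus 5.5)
Your proposal is correct and follows essentially the same route as the paper. The paper's argument, given in the paragraph just before the statement, derives strict quasi-concavity of $P_{n+1}(\xx,\cdot)$ on $(y_n,\infty)$ from the uniqueness of the derivative zero $w$ there and then splits into the cases $w>x_n$ and $w\le x_n$. Your explicit treatment of (5) through the sign pattern of $(P_{n+1})'_t$ just spells out what the paper leaves implicit. One wording point: the sign on $(y_n,w)$ is best justified by $P_{n+1}(\xx,y_n)=0<P_{n+1}(\xx,t)$ for $t\in(y_n,w]$, as in your final paragraph. The value $1$ at $x_n$ does not settle it when $w<x_n$.
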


\begin{lemma}\label{l:xstar} Assume that $\WW \subset C^1(\II)$ is an ECHS satisfying Property (P) with $\de_n=1$ (and $\de_0 \in \{0,1\}$ arbitrary). Let $\wx:=(x_1,\ldots,x_{n-1}) \in S^{n-1}:=\{\wx \in \RR^{n-1} : 0<x_1<\ldots<x_{n-1} \}$ be fixed.

Then there exists one, and only one value $x_n^* \in (x_{n-1},\infty)$ such that for $\xx^*:=(\wx,x_n^*)$ it holds $w=x_n^*$, with $w$ the unique derivative zero of $(P_{n+1})'_t(\xx^*,\cdot)$ in $(y_n,\infty)$.

In other words, there is a unique $x_n^*$ with the property that $(P_{n+1})'_t(\xx^*,x_n^*)=0$.

Moreover, $\xx^* \in \partial X$, for $x_n>x_n^*$ we have $\xx:=(\wx,x_n) \in \intt X^c$ with $(P_{n+1})'_t(\xx,x_n)<0$ and for $x_n<x_n^*$ we have $\xx \in X$ and $(P_{n+1})'_t(\xx,x_n)>0$.
\end{lemma}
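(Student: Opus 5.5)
We study the one-variable function $g(s):=(P_{n+1})'_t((\wx,s),s)$ for $s\in(x_{n-1},\infty)$, i.e. the derivative at the last node when that node moves. By Proposition \ref{prop:equiv} (equivalence of (1) and (5)), $\xx=(\wx,s)\in X$ iff $g(s)>0$. So it suffices to show that $g$ has exactly one zero $x_n^*$, that $g>0$ to its left and $g<0$ to its right.

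\emph{Monotonicity.} By the chain rule and Corollary \ref{l:magic-derivalt} with $i=n+1$, $j=n$,
\[
g'(s)=(P_{n+1})''_{tt}(\xx,s)+\frac{\partial (P_{n+1})'_t}{\partial x_n}(\xx,s)=(P_{n+1})''_{tt}(\xx,s)-(h_n)'_t(\xx,s)\,(P_{n+1})'_t(\xx,s).
\]
At any zero $s$ of $g$ the second term vanishes, so $g'(s)=(P_{n+1})''_{tt}(\xx,s)$. At such $s$ we have $w=s=x_n$, the unique simple zero of $(P_{n+1})'_t$ on $(y_n,\infty)$, and it is a strict maximum there. Simplicity (from Property (P) with the zero count of the derivative, or from $\WW'$ being ECHS) gives $(P_{n+1})''_{tt}(\xx,s)\neq 0$, hence $<0$. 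Thus every zero of the continuous function $g$ is a transversal down-crossing, so $g$ has at most one zero, with $g>0$ before it and $g<0$ after it. Here $C^2$ regularity of $\WW$ (true for exponential polynomials) is needed, and I would also check continuity of $g$ in $s$ via the Cramer-rule dependence noted earlier.

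\emph{Existence.} I expect this step to be the main obstacle: I must show $g$ changes sign. As $s\downarrow x_{n-1}$, $P_{n+1}$ goes to $-1$ at $x_{n-1}$ and $+1$ at $s$, so it has a steep rise in between. I would argue that $y_n\to x_{n-1}$ and that $w$ stays bounded away above, so $w>s$ and $g(s)>0$. Alternatively, one could simply observe $(P_{n+1})'_t(\xx,s)\ge (2-o(1))/(s-x_{n-1})$ from the mean value theorem combined with the concavity structure on $[y_n,w]$.

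As $s\to\infty$, I need $w<s$ eventually. For $w(t)=e^{-t}$, $h_n(\xx,t)=e^{s-t}\ell_n(t)$. I would show that $P_{n+1}$ on $[s,\infty)$ behaves like $e^{-(t-s)}$ times a polynomial whose logarithmic derivative at $s$ is $O(n/(s-x_{n-1}))\to0$, so $g(s)\approx -1+o(1)<0$. Concretely, write $P_{n+1}=e^{-t}Q(t)$ with $Q(s)=e^{s}$, so $g(s)=e^{-s}(Q'(s)-Q(s))$. I would bound $Q'(s)/Q(s)$ by expanding the Lagrange form. If this estimate proves delicate, I fall back on a general ECHS argument: bounded $w$ forced by the fixed nodes $\wx$.

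Finally, from the sign pattern and Proposition \ref{prop:equiv}: for $x_n<x_n^*$ we get $\xx\in X$; for $x_n>x_n^*$ the condition $g<0$ is open in all of $S$, so $\xx\in\intt X^c$. Then $\xx^*$ is a limit of points of $X$ but itself in $X^c$, hence $\xx^*\in\partial X$.
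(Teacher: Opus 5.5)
\textbf{Uniqueness and sign pattern: correct, but a different route from the paper.}
Your argument here works. By Corollary \ref{l:magic-derivalt}, $g'(s)=(P_{n+1})''_{tt}(\xx,s)<0$ at every zero of $g$, so every zero is a transversal down-crossing, and at most one zero can exist.

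The paper argues differently, through the difference formula \eqref{K-Ch_diffnnplusone}. Suppose $\xx\in X$, or merely $(P_{n+1})'_t(\xx,x_n)=0$. Then $P_{n+1}(\xx,\cdot)<P_{n+1}(\xx',\cdot)$ on $[x_n',\infty)$ for every $x_n'\in(x_{n-1},x_n)$. Hence $m_{n+1}(\xx')>1$, i.e.\ $\xx'\in X$. So if $g$ had two zeros, the smaller one would give a point of $X$, contradicting Proposition \ref{prop:equiv}.

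The two routes cost and buy different things:
\begin{itemize}
\item The paper's route uses only the $C^1$ hypothesis actually stated in the lemma. It never needs the multiplicity of the critical point.
\item Yours needs $\WW\subset C^2$ and simplicity of that critical point. Both hold for $e^{-t}$, but they are more than the lemma assumes.
\item In return, yours gives $\partial_s g\neq 0$ at $x_n^*$. The implicit function theorem would then make the separator $\phi$ of Proposition \ref{prop:fat} even $C^1$, bypassing the compactness argument used there.
\end{itemize}

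\textbf{Existence: only a plan, and one piece of it is wrong.}
The ``alternative'' for $s\downarrow x_{n-1}$ does not work. The mean value theorem only gives slope $2/(s-x_{n-1})$ at an unspecified point of $(x_{n-1},s)$. Transferring a lower bound to the endpoint $s$ would need convexity, and concavity gives the reverse inequality. Moreover, invoking the shape of $P_{n+1}$ on $[y_n,w]$ presupposes $s\le w$, which is exactly what you are trying to prove. The fallback, that $w$ is ``bounded by the fixed nodes'', is not an argument either.

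Your main asymptotic plans are sound, but they must actually be carried out:
\begin{itemize}
\item For $s\downarrow x_{n-1}$, one checks that $(s-x_{n-1})P_{n+1}(\xx,t)\to 2e^{x_{n-1}-t}\omega(t)(t-x_{n-1})/\omega(x_{n-1})$ locally uniformly together with $t$-derivatives, where $\omega(t)=\prod_{j=0}^{n-2}(t-x_j)$. Hence the critical point of $P_{n+1}(\xx,\cdot)$ tends to the fixed critical point $t^*>x_{n-1}$ of the limit, and eventually exceeds $s$.
\item For $s\to\infty$, write $P_{n+1}(\xx,t)=e^{-t}R(t)$. The terms $k<n$ contribute only $O(s^{n-2})$ to $R'(s)$, while the $h_n$ term contributes $e^{s}\sum_{j<n}(s-x_j)^{-1}$. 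Hence $g(s)=e^{-s}R'(s)-1\to -1$.
\end{itemize}

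The paper avoids all asymptotics with a short trick.
\begin{itemize}
\item Fix $\xi>x_{n-1}$ and let $Q\in\WW$ interpolate $(-1)^{n-i}$ at $x_i$ ($i<n$) and $2$ at $\xi$.
\item Since $Q(x_{n-1})=-1$, $Q(\xi)=2$ and $Q\to 0$ at $\infty$, there are $x'<\xi<x''$ with $Q(x')=Q(x'')=1$.
\item Uniqueness of interpolation gives $Q=P_{n+1}((\wx,x'),\cdot)=P_{n+1}((\wx,x''),\cdot)$.
\item Strict quasi-concavity on $(y_n,\infty)$ together with $Q(\xi)>1$ puts the critical point strictly between $x'$ and $x''$. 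That is, $g(x')>0>g(x'')$.
\end{itemize}
With this in place, your transversality argument completes the proof.
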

\begin{proof} First we observe that there exist $x', x'' \in J:=(x_{n-1},\infty)$ such that $\xx' \in X, \xx'' \in X^c$.

Let us choose any node $\xi \in J$, and consider the interpolating polynomial $Q$ which assumes $(-1)^{n-i}$ at $x_i$ for $i=0,1,\ldots,n-1$, and $2$ at $\xi$. That polynomial exists uniquely, as $Q=\sum_{i=0}^n b_i h_i((\wx,\xi),\cdot)$ with $b_i=\ve_{i,n+1}$  for $i=0,\ldots,n-1$ and $b_{n+1}=2$. As $Q$ assumes $-1$ at $x_{n-1}$, $+2$ at $\xi$ and tends to $0$ at $\infty$, there are two points $x_{n-1}<x'<\xi<x''$, where $Q(x')=Q(x'')=1$. Choosing any of them, we get node systems $\xx', \xx''$ satisfying $P_{n+1}(\xx',x_i)=Q(x_i)=P_{n+1}(\xx'',x_i)$ for $i=0,\ldots,n-1$, and also matching on $x_n'=x'$ resp. $x_n''=x''$ by construction. In view of uniquness of the interpolating polynomials, we find $P_{n+1}(\xx',\cdot)\equiv Q\equiv P_{n+1}(\xx'',\cdot)$. However, the maximum point $w$ of $Q=P_{n+1}(\xx',\cdot)=P_{n+1}(\xx'',\cdot)$ is somewhere between $x'$ and $x''$ in view of the fact that any $P_{n+1}$ is quasi-concave and $Q(\xi)=2>1$. It follows that for $\xx'$ we have $w>x_n'=x'$ and $\xx' \in X$, whereas for $x''$ we have $w < x_n''=x''$ and $\xx'' \in X^c$.

Moreover, we also see $(P_{n+1})'_t(\xx',x_n')=Q'(x')>0$ and $(P_{n+1})'_t(\xx'',x_n'')=Q'(x'')<0$. As $(P_{n+1})'_t(\xx,x_n)$ changes continuously in function of the nodes, the same will also hold in a neighborhood of the nodes $\xx'$, resp. $\xx''$. Therefore, in view of Proposition \ref{prop:equiv}
even a small neighborhood of these points must belong to $X$, resp. $X^c$.

This also furnishes existence of a point $x_n^*$ on the boundary of $\{x_n : \xx \in X\}$ -- indeed, such a value must occur somewhere after $x'$, but not later than $x''$. As $X$ is open, any boundary point $\xx^*=(\wx,x_n^*)$ belongs to $X^c$ (but it is also on the boundary of $X^c$). Moreover, as in $X$ $(P_{n+1})'_t(\xx,x_n)<0$, and on $X^c$ it is $\le 0$, on their common boundary the only possibility is $(P_{n+1})'_t(\xx^*,x_n^*)=0$.

In the following we aim at describing better the point set $\{x_n : \xx \in X\}$, eventually deriving uniqueness of $\xx^*$. So let us take any $x_n$ with $\xx \in X$. We claim that then for $x_n' \in (x_{n-1},x_n)$ we also have $\xx':=(\wx,x') \in X$.

Let us recall the formula
\eqref{K-Ch_diff_j} for the case $j=n$ and $i=n+1$ here:
\begin{equation}\label{K-Ch_diffnnplusone}
P_{n+1}(\xx,t)-P_{n+1}(\xx',t)=(P_{n+1}(\xx,x_n')-1)h_n(\xx',t).
\end{equation}
As $\xx \in X$, we have $P_{n+1}(\xx,\cdot)$ strictly increasing on $[y_n,w] \supset [y_n,x_n]$, hence $P_{n+1}(\xx,x_n')<P_{n+1}(\xx,x_n)=1$ whenever $y_n\le x_n'\le x_n$. Further, the same is trivial for $x_n' \in (x_{n-1},y_n)$, because on that interval $P_{n+1}(\xx,\cdot) <0$, given that it is $-1$ at $x_{n-1}$, 0 at $y_n$, and $P_{n+1}(\xx,\cdot)$ has no more zeros on that interval.

So finally we derive that the right hand side of \eqref{K-Ch_diffnnplusone} is negative on $[x_n,\infty)$. Indeed, we have just seen that the first factor is negative, whereas the last term $h_n(\xx',t)$ is positive on $[x'_n,\infty)$ containing $[x_n,\infty)$. This means that also the left hand side is negative, that is, we find
\[
P_{n+1}(\xx,t) < P_{n+1}(\xx',t) \qquad (t \in [x'_n,\infty) ).
\]
It follows $m_{n+1}(\xx)=\max_{[x_n,\infty)} P_{n+1}(\xx,t) \le \max_{[x_n,\infty)} P_{n+1}(\xx',t)\le m_{n+1}(\xx')$, and $\xx' \in X$, as claimed. Moreover, here we have the strict inequality $\max_{[x_n,\infty)} P_{n+1}(\xx,t)< \max_{[x_n,\infty)} P_{n+1}(\xx',t)$, too, for $P_{n+1}(\xx,\cdot)$ and $P_{n+1}(\xx',\cdot)$ are continuous and positive functions on $[x_n,\infty)$, whereas their limit at $\infty$ is 0.

So we derive $m_{n+1}(\xx)<m_{n+1}(\xx')$, and thus the latter exceeds 1, hence $\xx' \in X$, as wanted. This in particular means also that $\{x_n : \xx \in X\}$ is an open, finite interval with left endpoint $x_{n-1}$.

To estimate with a strict inequality was not important for the case $\xx \in X$, as then $m_{n+1}(\xx')\ge m_{n+1}(\xx)>1$ already suffices. However, note that the above argument works mutatis mutandis even for any $\xx \in X^c$ satisfying $(P_{n+1})'_t(\xx,x_n)=0$. Then we have $w=x_n$, but the only place where $w$ had a role was using $[y_n,w] \supset [y_n,x_n]$, which still holds true.

This furnishes a stronger result: if $\xx \in X$ or $\xx \in X^c$ but with $(P_{n+1})'_t(\xx,x_n)=0$, then we always have $\xx' \in X$ for any $\xx'=(\wx,x_n')$ with $x_n'<x_n$ (but of course satisfying $x'_n>x_{n-1}$).

Now we can draw an important conclusion: there can be only one value $x_n^*$ with $(P_{n+1})'_t(\xx,x_n^*)=0$. Indeed, if there were two, the one with smaller last coordinate should have been in $X$, a contradiction in view of (5) of Proposition \ref{prop:equiv}. So, $x_n^*$ is unique.

However, as remarked earlier, any point $\xx$ with $(P_{n+1})'_t(\xx,x_n)<0$ is in the interior of $X^c$, so that on the halfline $(x_{n-1},\infty)$ there can only be one and only one value $x_n^*$ with $(\wx,x_n^*)$ a boundary point of $X^c$, whereas all other values $x_n >x_n^*$ give rise to $\xx=(\wx,x_n) \in \intt X^c$.
\end{proof}

Here we arrive at a crucial finding, which we will need in Section \ref{sec:connected}.

\begin{proposition}\label{prop:fat} There exists a continuous function $\phi : S^{n-1} \to \RR$, such that $\phi(x_1,\ldots,x_{n-1})>x_{n-1}$ and $X^c$ is the closed epigraph of $\phi$. Furthermore, $X^c$ is \emph{fat} in $S$, i.e., $X^c = \overline{\intt X^c}$.
\end{proposition}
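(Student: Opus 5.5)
We define $\phi(\wx):=x_n^*(\wx)$, the unique value furnished by Lemma \ref{l:xstar}. This lemma already gives $\phi(\wx)>x_{n-1}$. It also gives the slice description: for fixed $\wx$, $(\wx,x_n)\in X^c$ iff $x_n\ge \phi(\wx)$, and $(\wx,x_n)\in\intt X^c$ when $x_n>\phi(\wx)$. Hence $X^c=\{(\wx,x_n)\in S: x_n\ge\phi(\wx)\}$ is the closed epigraph of $\phi$ (intersected with $S$, which is automatic since $\phi>x_{n-1}$). The remaining claims are continuity of $\phi$ and fatness.

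For continuity I would use the function $F(\xx):=(P_{n+1})'_t(\xx,x_n)$. It is continuous on $S$, as shown earlier through Cramer's rule. By Lemma \ref{l:xstar}, for fixed $\wx$, $F(\wx,\cdot)$ is positive on $(x_{n-1},\phi(\wx))$, vanishes at $\phi(\wx)$, and is negative beyond it.

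Fix $\wx^0$ and $\ve>0$ small, with $\phi(\wx^0)-\ve>x^0_{n-1}$. Then $F(\wx^0,\phi(\wx^0)-\ve)>0$ and $F(\wx^0,\phi(\wx^0)+\ve)<0$. By continuity of $F$ in all variables, these strict signs persist for $\wx$ in a neighbourhood of $\wx^0$, and this neighbourhood can be chosen so that $x_{n-1}<\phi(\wx^0)-\ve$ there as well. For such $\wx$, the sign pattern of $F(\wx,\cdot)$ forces $\phi(\wx)\in(\phi(\wx^0)-\ve,\phi(\wx^0)+\ve)$. So $\phi$ is continuous.

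The one point needing care is that the sign characterization must hold for every $x_n>x_{n-1}$, not only near $\phi$. This is exactly what Lemma \ref{l:xstar} provides: $F>0$ on $X$ and $F<0$ above $\phi$, with uniqueness of the zero. No intermediate value argument beyond this is needed, so I do not expect a real obstacle here.

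For fatness, $\overline{\intt X^c}\subset X^c$ holds because $X^c$ is relatively closed in $S$. Conversely, take $\xx=(\wx,x_n)\in X^c$, so $x_n\ge\phi(\wx)$. The points $(\wx,x_n+1/k)$ have last coordinate strictly greater than $\phi(\wx)$, so by Lemma \ref{l:xstar} they lie in $\intt X^c$. They converge to $\xx$, hence $\xx\in\overline{\intt X^c}$.

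Alternatively, continuity of $\phi$ gives directly that the strict epigraph $\{x_n>\phi(\wx)\}$ is open, lies in $X^c$, and has closure (in $S$) equal to the closed epigraph. This also identifies $\intt X^c$ with the strict epigraph and $\partial X$ with the graph of $\phi$.
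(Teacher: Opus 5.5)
Your proof is correct. The epigraph description and the fatness are read off from Lemma \ref{l:xstar} exactly as in the paper, but your proof of continuity of $\phi$ follows a different route.

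The paper argues in two steps:
\begin{enumerate}
\item It first establishes local boundedness of $\phi$ by a separate construction. It fixes an extra node $\xi>x_{n-1}+1$ and forms the interpolant $Q((\wy,\xi);\cdot)$ from the proof of Lemma \ref{l:xstar}. It then bounds $\phi(\wy)$ by the largest root of $Q((\wy,\xi);\cdot)=1$, which varies continuously with $\wy$.
\item It then runs a compactness argument. It extracts convergent subsequences of $\phi(\wx^k)$, passes to the limit in $(P_{n+1})'_t((\wx^k,\phi(\wx^k)),\phi(\wx^k))=0$, and concludes by uniqueness of the zero.
\end{enumerate}

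You instead use the full sign pattern of $F(\wx,\cdot)$ given by Lemma \ref{l:xstar}: positive below $\phi(\wx)$, negative above. You then let the two strict signs at $\phi(\wx^0)\pm\ve$ persist under small perturbations of $\wx$. This has three advantages:
\begin{itemize}
\item it gives the upper and lower bounds on $\phi(\wx)$ at the same time;
\item it makes the auxiliary boundedness construction unnecessary;
\item it handles a point the sequential argument leaves implicit. A subsequential limit $y$ of $\phi(\wx^k)$ must satisfy $y>x_{n-1}$, since otherwise $(\wx,y)\notin S$ and continuity of $(P_{n+1})'_t$ cannot be invoked there. In your version $\phi(\wx)>\phi(\wx^0)-\ve>x_{n-1}$ comes automatically.
\end{itemize}

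The paper's scheme needs only uniqueness of the zero of $F(\wx,\cdot)$ plus an a priori bound, i.e.\ the standard ``unique solution plus compactness'' pattern. Yours needs the strict sign information instead. Since Lemma \ref{l:xstar} already supplies that, your argument is the shorter and more self-contained of the two.
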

\begin{proof} We have already found $\phi(x_1,\ldots,x_{n-1}):=x_n^*$ with the epigraph property. Observe that $X^c = \overline{\intt X^c}$ follows directly from the last assertion of Lemma \ref{l:xstar}.

Next, we prove that $\phi$ is bounded locally, say in a $\delta$-neighborhood of $\wx=(x_1,\ldots,x_{n-1})$, with a small enough $\de>0$. We choose $0<\de<1$ such that $\de<\min_{i=1,\ldots,n-1} x_i-x_{i-1}$, and fix a value $\xi>x_{n-1}+1$. If we repeat the interpolation construction of the beginning of the proof of Lemma~\ref{l:xstar} for all points $\wy \in B(\wx,\de)$ and with the fixed $\xi>y_{n-1}$, the interpolation polynomial $Q((\wy,\xi);\cdot)$ --and the point $(\wy,y'')\in X^c$ satisfying $Q((\wy,\xi),y'')=1$ and $Q'_t((\wy,\xi),y'')<0$-- is obtained. As discussed in the mentioned proof, the value $\phi(\wy)=y_n^*<y''$, always. However, $y''$ is the largest root of the polynomial equation $Q((\wy,\xi);\cdot)=1$, where $Q$ is the interpolation polynomial with interpolation constant values $(b_i)$ and interpolation nodes $(\wy,\xi)$, so that this root depends on the varying nodes $\wy$ continuously, and therefore remains bounded in $B(\wx,\de)$. It follows that also $y_n^*<y''$ is bounded, as wanted.

Continuity follows easily, too. Indeed, let $\xx =(x_1,\ldots,x_{n-1},x_n^*) \in \partial X$, meaning that $(P_{n+1})'_t(\xx, x_n^*)=0$. Take a sequence of values $\wxk=(x_1^k,\ldots,x_{n-1}^k) \to \wx=(x_1,\ldots,x_{n-1})$. Then by construction $(P_{n+1})'_t((\wxk,\phi(\wxk)),\phi(\wxk))=0$ for all $k$.

The sequence $\phi(\wxk)$ is bounded, hence it admits
a convergent subsequence. Let $\phi(\wx_{k_\ell})\to y$. Passing to the limit and using the continuity of $(P_{n+1})'_t$ in all variables we obtain $$(P_{n+1})'_t((\wx,y),y)=0.$$ The uniqueness of $x_n^*$ entails that only $y=x_n^*$ is possible. Thus every convergent subsequence of $\{\phi(\wxk)\}$ has the same limit $x_n^*$. Since the sequence is bounded, this implies $\phi(\wxk)\to x_n^*$. Therefore, $\phi$ is continuous.
\end{proof}

\section{Location of zeros of $P_i$ and their derivatives}\label{sec:zeros}

\subsection{Roots of the $P_i$.} In the first part of our work, we are going to follow the key steps that were applied in \cite{Kilgore, K-AMH, CBoorPinkus}. Here we perform a general analysis, with a system $\WW_n=w\Pn$ of weighted polynomials forming an ECHS. At some points, restrictions do apply, so that we signal the necessary assumptions in full.

To follow that standard approach, first we need the location of the zeros of the functions $P_i$ ($i=1,\ldots,n+1$) and then the ones of their derivatives, too. To explore this, we will occasionally consider the corresponding polynomial functions,
\begin{equation}\label{pi}
p_i(\xx,t):=p_i(t):=\frac{P_i(t)}{w(t)}=\sum_{k=0}^n \frac{\vki}{w(x_k)} \ell_k(t), \qquad i=1,\ldots,n+1,
\end{equation}
too. These will be called \emph{the polynomial parts} of the functions $P_i$. The zeros of the polynomials $p_i$ ($i=1,\ldots,n+1$) coincide the ones of $P_i$. In line of this, speaking loosely, we will talk about $\deg P_i$, meaning of $\deg p_i$, and will refer to the known fact that $p_i$, hence $P_i$, can have at most as many zeroes as its degree. At the $n+1$ nodes $x_0:=0, x_1, \ldots, x_n$ the function values of $P_i$ alternate signs except for $x_{i-1}, x_{i}$, where the signs are both positive. More precisely, these sign equalities hold true for $i=1,\ldots,n$; thus, for such $i$, the function $P_i$ exhibits $n$ alternating signs and consequently has $n-1$ zeros between the alternating nodes. Further, these zeros are unique and simple in their intervals, for otherwise to meet the required signs at the endpoint nodes, there would have to be at least three zeroes (counted with multiplicities), altogether the number of zeros climbing to at least $n-1+2=n+1$ exceeding the degree of $P_i$. As already told, $P_i(x_{i-1})=P_i(x_i)=1$ ($i=1,\ldots,n$) and hence for similar reasons as before, there is no zero in $I_i$.

Recall that we have already seen that $P_{n+1}$ has $n$ different, simple zeroes, one in each $(x_i,x_{i+1})$, $i=0,\ldots,n-1$, it cannot have more, and its degree is exactly $n$, not less.

Let us denote the leading coefficient\footnote{More precisely, the coefficient of the $n$th power term in the polynomial part $p_i$ belonging to $P_i$.} of $P_i$ as $a_i$. Also, denote the leading coefficients of the Lagrange fundamental polynomials $\lk$ as $b_k$. Then
\begin{equation}\label{bk}
b_k=\prod_{\substack{j=0\\j\ne k}}^n \frac{1}{x_k-x_j}=(-1)^{n-k} |b_k|, \quad k=0,\dots, n.
\end{equation}
To compute $a_i$ is easy from the formulas \eqref{eq:hkt}, \eqref{epsilonki} and \eqref{Pi}:
\begin{equation}\label{ai}
\begin{aligned}
a_i= \sum_{k=0}^n \frac{\vki}{w(x_k)} b_k &= \sum_{k=0}^n \frac{\vki}{w(x_k)} (-1)^{n-k} |b_k|
%%%\qquad
%%%\\ &
= (-1)^{n+1+i} \left\{\sum_{k=0}^{i-1} \frac{|b_k|}{w(x_k)} - \sum_{k=i}^{n} \frac{|b_k|}{w(x_k)} \right\}.
\end{aligned}
\end{equation}

From this it is clear that $a_{n+1}>\cdots > (-1)^{n+1+i}a_i > \cdots > (-1)^{n+2} a_1$, and so $a_{n+1}$ is positive, and there exists a certain index $r \le n$ such that $(-1)^{n+1+s} a_s \le 0$ for $s\le r$ (and is strictly $<0$ for $s<r$), whereas $(-1)^{n+1+q} a_q>0$ for $q>r$.

From now on, we can follow the arguments appeared in \cite{SzAMH} with slight modifications. Therefore, we do not present every calculation in a precise form, but we emphasize the differences and list the main properties of the functions $P_i$.

Above we found $n$ simple zeros of $P_{n+1}$, and $n-1$ for the further functions $P_i$ for each $i=1,\ldots n$, one in each $\intt I_k$, $k=1,\ldots, n$, save the very $I_i$. If $a_r=0$, then $P_r$ cannot have more zeros--we found them all, (as it cannot be identically zero given that $P_r(x_r)=1$).

For the remaining functions $P_i$, we have to  determine the sign of function values at $x_0$, $x_n$ and then also for very large and very small\footnote{Formally, there is a problem here if the weight $w(t)$ is not defined for $t<0$. However, the root analysis goes over to the polynomials \eqref{pi}, if we do not have $w(t)$ on the negative real axis. (Note that leading coefficients etc. are always defined for $p_i$.) Given that now we focus on the exponential weight, we save the reader from translating back and forth between $P_i$ and $p_i$.} $\pm T$, respectively, from the leading coefficient. Indeed, $\lim_{t \to \pm \infty} P_i(t)/w(t) t^n$ equals to $a_i$ and $(-1)^{n} a_i$, respectively, hence for large, resp. small, values $\pm T$ we have
$$
\sign P_i ( T) = \sign a_i \quad \text{ and } \quad \sign P_i ( -T) = (-1)^{n} \sign a_i.
$$
So, let $i$ be any index between $1$ and $n$, excluding $i=r$ if $a_r=0$. For these indices $a_i\ne 0$ and we have $P_i(x_0)=\ve_{0,i}=(-1)^{1-i}$ and $P_i(x_n)=\ve_{n,i}= (-1)^{n-i}$. Therefore, if $(-1)^{n+1+i} a_i <0$ (i.e., if $i<r$ and even for $i=r$ if $a_r \ne 0$), then $\sign P_i(-T)= (-1)^{n+n+1+i-1}=(-1)^i$, whereas $P_i(x_0)=(-1)^{1-i}$, and there is one more sign change in $(-T,x_0)$, pointing to one more zero of $P_i$. Similarly, if $i>r$ -- and then automatically $(-1)^{n+1+i}a_i>0$ --, then $P_i(x_n)=(-1)^{n-i}$ and $\sign P_i(T)= \sign a_i= (-1)^{n+1+i}$, and there is one more sign change, hence another zero, in $(x_n,T)$.

Altogether, we obtain that for $i$ from $1$ up to $r-1$ the function $P_i$ has an extra zero in $I_0:=(-\infty,x_0)$, for $i=r$ either we have the same or if $a_r=0$, then there is no extra zero outside of $(x_0,x_n)$, and for $r< i \le n$ $P_i$ has one extra zero in the interior $(x_n,\infty)$ of $I_{n+1}$.

\bigskip
According to the above we now define the index set $\Ji$ to contain all those indices $j$ for which there is a root of $P_i$ in $I_j$, where $j=0, 1,\ldots,n+1$. Then we will have
\begin{equation}\label{Jistardef}
\Ji=\begin{cases}
 \{0,1,\ldots,n\}\setminus \{i\} & \textrm{if} \quad 1\le i<r
\\  & \textrm{and if} \quad i=r \quad \textrm{and} \quad {a_r} \ne 0
\\ \{1,\ldots,n\}\setminus \{i\} & \textrm{if} \quad i=r \quad \textrm{and} \quad {a_r} = 0
\\ \{1,\ldots,n,n+1\}\setminus \{i\} & \textrm{if} \quad r<i\le n+1
\end{cases}.
\end{equation}

\subsection{Interlacing of roots of $P_i$} This part of the analysis goes mutatis mutandis of de Boor and Pinkus \cite{CBoorPinkus}, see Lemma 1 there.

To deal with zeros of different $P_i$, let $y_k^{(i)}$ denote the unique zero of $P_i$ in $I_k$; we have such a zero precisely for $k \in \Ji$. Further, if $i<j$ we denote $\Kij:=\Ji \cap \Jj \setminus \{i+1,\ldots,j-1\}$.

\begin{lemma}\label{l:zerosinsame} Let $1 \le i < j \le n+1$ be arbitrary. If $k \in \Kij$, then the zeros of $P_i$ and $P_j$ satisfy $y_k^{(j)} < y_k^{(i)}$; and if $i<k<j$, then $y_k^{(i)} < y_k^{(j)}  $.
\end{lemma}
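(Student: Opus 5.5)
The plan is to compare $P_i$ and $P_j$ through their difference and then count zeros. Since $\ve_{k,i}$ and $\ve_{k,j}$ agree exactly when $k$ lies on the same side of both intervals, we have $\ve_{k,i}=\ve_{k,j}$ for $k\le i-1$ and for $k\ge j$, while they differ in sign for $i\le k\le j-1$. Hence
\[
D:=P_j-P_i=\sum_{k=i}^{j-1}(\ve_{k,j}-\ve_{k,i})h_k=2\sum_{k=i}^{j-1}\ve_{k,j}h_k .
\]
The key facts about $D$ are the following.
\begin{enumerate}
\item $D\in\Wn$ vanishes at the $n+1-(j-i)$ nodes $x_k$ with $k<i$ or $k\ge j$.
\item $D$ takes alternating signs at $x_i,\dots,x_{j-1}$. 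Indeed, $D(x_k)=2\ve_{k,j}=2(-1)^{k+1-j}$, because $k<j$.
\item $D$ therefore has at least $j-i-1$ further zeros, one in each $(x_k,x_{k+1})$ with $i\le k\le j-2$. The interval $I_k$ ($i<k<j$) thus contains a sign change of $D$.
\end{enumerate}
These already give $n$ zeros. If $D$ had one more zero counted with multiplicity, the ECHS property would force $D\equiv \Null$, which is impossible. So all these zeros are simple, and $D$ has no other zeros on the relevant range. The same count applies to the polynomial part, since the zeros of $D$ and $d=D/w$ coincide and $\deg d\le n$; this lets us also cover the outer intervals $I_0$ and $I_{n+1}$ via $p_i,p_j$.

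The comparison is then carried out at the zeros of $P_i$. Fix $k$ with $y_k^{(i)}$ and $y_k^{(j)}$ both defined. At $t=y_k^{(i)}$ we have $D(y_k^{(i)})=P_j(y_k^{(i)})$. The sign of $P_j$ there tells us on which side of $y_k^{(j)}$ the point $y_k^{(i)}$ lies, since $P_j$ has a single simple zero in $I_k$ with known endpoint signs $\ve_{k-1,j},\ve_{k,j}$; for the outer intervals we use the sign at $\pm T$ computed from the leading coefficients. So the whole task reduces to determining $\sign D$ on each $I_k$.

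Outside $[x_{i-1},x_j]$ the sign of $D$ is determined by simplicity of its zeros. The only zeros of $D$ there are the nodes, and it changes sign at each of them. Near $x_{i-1}$ and $x_j$ the sign is fixed by continuing from the alternating values at $x_i$ and $x_{j-1}$ through the simple zeros at $x_{i-1}$ and $x_j$. Inductively, on $\intt I_k$ with $k\le i-1$ or $k\ge j+1$ we get a definite sign $\sigma_k$ of $D$, alternating in $k$. One then checks that $\sigma_k$ coincides with the sign $P_j$ takes near the right end of $I_k$ when $k<i$, and near the right end as well when $k>j$. Either way this places $y_k^{(i)}$ to the right of $y_k^{(j)}$, which gives $y_k^{(j)}<y_k^{(i)}$.

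For $i<k<j$ the situation is different, because $D$ has its own zero $u_k\in(x_{k-1},x_k)$ in $I_k$. Here I would argue directly on the sign of $P_j-P_i$ at the two zeros. Writing $P_j(y_k^{(i)})=D(y_k^{(i)})$ and $P_i(y_k^{(j)})=-D(y_k^{(j)})$, the signs of $P_i$ and $P_j$ on $I_k$ have the same orientation pattern up to the sign flip. So the roles reverse, and one obtains that $u_k$ separates the two zeros in the opposite order, $y_k^{(i)}<u_k<y_k^{(j)}$. A cleaner route to the same conclusion: at $u_k$ we have $P_i(u_k)=P_j(u_k)$, and the endpoint signs $\ve_{k-1,i}=-\ve_{k-1,j}$ and $\ve_{k,i}=-\ve_{k,j}$ show that $P_i$ and $P_j$ are monotone in opposite directions across $I_k$. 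Both values at $u_k$ must share a sign, which forces the ordering.

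The main obstacle is the bookkeeping of signs, especially at the outer intervals $I_0$ and $I_{n+1}$. There I would use the leading-coefficient comparison from \eqref{ai}: $a_j-a_i$ is the leading coefficient of $d$, and its sign is consistent with the ordering $(-1)^{n+1+i}a_i$ increasing in $i$. This is needed to get $\sign D(\pm T)$ and so decide the order of the extra zeros in $\Kij\cap\{0,n+1\}$.
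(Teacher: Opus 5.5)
There are two problems: a fixable sign error, and a genuine gap in the case $i<k<j$.

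\textbf{The sign error.} Your description of where $\ve_{k,i}=\ve_{k,j}$ holds is correct only when $i+j$ is even. By \eqref{epsilonki}, $\ve_{k,i}\ve_{k,j}=(-1)^{i+j}$ when $k<i$ or $k\ge j$, and $\ve_{k,i}\ve_{k,j}=-(-1)^{i+j}$ when $i\le k<j$. For instance, with $i=1$, $j=2$ one has $P_1(x_0)=1=-P_2(x_0)$ but $P_1(x_1)=P_2(x_1)=1$, so $P_2-P_1$ vanishes at $x_1$ and not at $x_0$. You have to normalize and take $D:=(-1)^{j+1}P_j-(-1)^{i+1}P_i$, which is $-H$ in \eqref{ghdef}. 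This $D$ satisfies $D(x_m)=0$ for $m<i$ or $m\ge j$, and $D(x_m)=2(-1)^m$ for $i\le m<j$. With this $D$, your zero count and sign propagation for $k\in\Kij$ go through, including $k=0$ and $k=n+1$. There the leading coefficients are not even needed: once all $n$ zeros of the polynomial part are located, the sign on $I_0$ and $I_{n+1}$ follows from simplicity of the zeros at $x_0$ and $x_n$. This half is the paper's $H$-argument, which it delegates to \cite{SzAMH}.

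\textbf{The gap for $i<k<j$.} On $I_k$, $(-1)^{i+1}P_i$ runs from $(-1)^k$ to $(-1)^{k+1}$, while $(-1)^{j+1}P_j$ runs the other way. They cross at the zero $u_k\in\intt I_k$ of $D$, with a common value $c$. Whether $y_k^{(i)}<u_k<y_k^{(j)}$ or $y_k^{(j)}<u_k<y_k^{(i)}$ is decided exactly by the sign of $c$, and your argument never determines it. That the two values at $u_k$ ``share a sign'' is automatic, since they are equal. Your first version is circular: $D$ changes sign at $u_k$ inside $I_k$, so $\sign D(y_k^{(i)})$ already depends on the ordering you want.

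Knowing where $P_i$ and $P_j$ cross cannot settle this. You need the complementary combination $G=(-1)^{i+1}P_i+(-1)^{j+1}P_j$, which equals $2c$ at $u_k$. The paper expands $G=2\sum_{m=0}^{i-1}(-1)^m h_m+2\sum_{m=j}^{n}(-1)^{m+1}h_m$. Since $\sign h_m=\ve_{m,k}$ on $\intt I_k$, every term has sign $(-1)^{k+1}$ there. Hence $(-1)^{k+i}P_i>(-1)^{k+j+1}P_j$ on $\intt I_k$, where both functions run from $-1$ at $x_{k-1}$ to $1$ at $x_k$ with exactly one zero each. Evaluating at $y_k^{(j)}$ gives $(-1)^{k+i}P_i(y_k^{(j)})>0$, i.e.\ $y_k^{(i)}<y_k^{(j)}$.
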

Note that $\{i+1,\ldots,j-1\} \subset \Ji \cap \Jj$ and hence the two index sets in the conditions sum up precisely to $\Ji \cap \Jj$.

\begin{proof}
We will consider two auxiliary functions $G$ and $H$ which were introduced in \cite{SzAMH}, following \cite{CBoorPinkus}. Just like there, the sign changes and zeros of $G$ and $H$ guarantee the desired inequalities for the zeros of functions $P_i$, $(i=1,\dots,n+1)$. Namely, let
\begin{equation}\label{ghdef}
\begin{aligned}
&G(t):= (-1)^{i+1} P_i(t)+(-1)^{j+1} P_j(t) \quad \textrm{and}\\
&H(t):= (-1)^{i+1} P_i(t)-(-1)^{j+1} P_j(t) .
\end{aligned}
\end{equation}

Moreover, the function $H$, that was applied to prove the inequalities, $y_k^{(j)} < y_k^{(i)}$ for $k \in \Kij$, has exactly the same form in both cases of $\WW_n$ and $\YY_n=\WW_n\oplus \bf{1}$, already analysed in \cite{SzAMH}. Therefore, we omit the presentation of the proof of that part of the stated inequalities, and restrict to the case when $i<k<j$.

However, the case of $G$, whose examination is needed to verify the inequalities $y_k^{(i)} < y_k^{(j)}$ if $i<k<j$, slightly differ from the analysis for $\YY_n$. So let now $I_k=[x_{k-1},x_{k}]$ be any of the intervals with $i<k<j$. We know that there are sign changes and hence zeroes of $P_i$ and $P_j$ in $I_k$. Also, $G(x_{k-1})=G(x_k)=0$.

Here we can add, that $G$ has constant sign on $(x_{k-1},x_{k})=\intt I_k$. In fact, from \eqref{Pi} we obtain
\begin{equation}\label{gPiPjIk}
\begin{aligned}
G &=(-1)^{i+1} P_i +(-1)^{j+1} P_j = \sum_{m=0}^n \frac{\left((-1)^{m+\chi_{i\le m}} + (-1)^{m+\chi_{j\le m}}\right)}{w(x_m)} \ell_m \\
&= 2\sum_{m=0}^{i-1} \frac{(-1)^m}{w(x_m)} \ell_m + 2\sum_{m=j}^n \frac{(-1)^{m+1}}{w(x_m)} \ell_m,
\end{aligned}
\end{equation}
and on $\intt I_k$ the function $\ell_m$ has the sign $\ve_{m,k}=(-1)^{m+k+1+\chi_{k \le m}}$, so that all the terms of the expression on the right have the sign $(-1)^{k+1}$. Therefore, $\sign G =(-1)^{k+1}$ on $\intt I_k$.

Consider now $f:=(-1)^{k+1} G=(-1)^{i+k} P_i -(-1)^{j+k+1} P_j$ on $I_k$. It is zero at the endpoints $x_{k-1}, x_{k}$, and positive inside the interval.

Let us put here $\ff:=(-1)^{k+i}P_i$ and $\psi:=(-1)^{k+j+1}P_j$. As $k<j$, the formula \eqref{epsilonki} guarantees that $\psi(x_{k})=(-1)^{k+j+1}P_{j}(x_k)= (-1)^{k+j+1} \vkj =1>0$, therefore the functions $f$, $\ff$, and $\psi$ on $I_k$ satisfy the conditions of the simple lemma implicitly used already in \cite{CBoorPinkus} and spelled out in \cite{SzAMH} as Lemma 2. This Lemma implies that on $\intt I_k$ any zero of $\ff$ precedes the largest zero of $\psi$. However, these functions each have precisely one zero, namely $y^{(i)}_k$ and $y_k^{(j)}$ (the zeros of $P_i$ and $P_j$ in $I_k$), respectively, hence we are led to $y^{(i)}_k<y_k^{(j)}$, as wanted.
\end{proof}

\begin{lemma}\label{l:interlacing} For any $1\le i < j \le n+1$, the zeros of $P_i$ and $P_{j}$ strictly interlace, that is, strictly between any two consecutive (simple) zeros of one of the polynomials, there is exactly one (simple) zero of the other, too.

Moreover, the ordering of the $P_i$ is counter-cyclical: $P_r\prec P_{r-1} \prec \dots\prec P_1 \prec P_{n+1} \prec P_n \prec \dots \prec P_{r-1}$, unless $a_r=0$, in which case $P_r$ is only a degree $n-1$ polynomial, whose $n-1$ zeros are interlacing with the $n$ zeros of any other $P_i$ with $i\ne r$ (and the ordering of the remaining $P_i$ remains counter-cyclical starting with $P_{r-1}$ in this case).
\end{lemma}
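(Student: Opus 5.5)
The plan is to derive the interlacing purely from the zero locations already established: the index sets $\Ji$ from \eqref{Jistardef}, Lemma \ref{l:zerosinsame}, and the fact that every $P_i$ has at most $n$ zeros, all simple (at most $n-1$ for $P_r$ when $a_r=0$). Fix $1\le i<j\le n+1$. List the zeros of $P_i$ and $P_j$ according to the intervals $I_0=(-\infty,x_0),I_1,\ldots,I_{n+1}$ that contain them. Each interval contains at most one zero of each function, and a node $x_m$ is never a zero, since $P_i(x_m)=\pm1$. Interlacing therefore reduces to two tasks: comparing the two zeros inside any interval that holds a zero of both functions, and checking that the intervals holding a zero of only one of the functions are placed so that no two zeros of the same function become consecutive in the merged ordering.

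First, the common intervals. For $k\in\Ji\cap\Jj$, Lemma \ref{l:zerosinsame} fixes the order inside $I_k$. For $k\in\Kij$ it gives $y_k^{(j)}<y_k^{(i)}$, so $P_j$ comes first. For $i<k<j$ it gives $y_k^{(i)}<y_k^{(j)}$, so $P_i$ comes first. Next, the intervals that carry a zero of only one function. From \eqref{Jistardef}, $I_i$ carries a zero of $P_j$ but none of $P_i$, and $I_j$ (when $j\le n$) carries a zero of $P_i$ but none of $P_j$. Besides these, only the outer intervals $I_0$ and $I_{n+1}$ can differ, depending on how $i$ and $j$ sit relative to $r$.

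Now walk from left to right through the merged sequence. On the block $k<i$ the pattern is $(j,i),(j,i),\ldots$. At $I_i$ only $P_j$ has a zero, so the sequence continues $\ldots,i,\,j,$ and the next block $i<k<j$ has pattern $(i,j)$. The switch of order is absorbed exactly by the single extra $j$-zero, and the same happens at $I_j$ with a single $i$-zero, after which the order returns to $(j,i)$. It remains to check the two ends. If $P_i$ has a zero in $I_0$ but $P_j$ does not (the case $i<r\le j$, say), then that lone $i$-zero stands at the far left followed by $j$, which causes no conflict. An analogous statement holds at $I_{n+1}$. The cases to run through are $j\le r$, $i<r<j$, $r<i$, and the boundary cases $i=r$ or $j=r$ with $a_r=0$ or $a_r\neq0$. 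In each case the parity and ends of the sequence work out, and when $a_r=0$ the degree-$(n-1)$ polynomial $P_r$ lies strictly inside the span of the other one's zeros.

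The counter-cyclical ordering then follows by reading off which function has the leftmost zero. For $i<j<r$ (with $a_r\ne0$ for the last one involved), both $P_i$ and $P_j$ have a zero in $I_0$, and $I_0\in\Kij$, so $P_j$'s zero is leftmost. Hence $P_j\prec P_i$, giving $P_r\prec P_{r-1}\prec\dots\prec P_1$. For $r<i<j$, both have their first zero in $I_1\in\Kij$, again with $P_j$ first, giving $P_{n+1}\prec P_n\prec\dots\prec P_{r+1}$. The link $P_1\prec P_{n+1}$, and the wrap back to $P_{r-1}$, are read off the cross cases with $i<r<j$, where the outer zeros sit in $I_0$ versus $I_{n+1}$.

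I expect the main obstacle to be the bookkeeping at the outer intervals, especially at $i=r$ or $j=r$ and when $a_r=0$, rather than any new analytic input. In particular I need to make sure that the lone zero in $I_0$ or $I_{n+1}$ never lands next to another zero of the same function. Before carrying out the full case analysis I would test this against small instances, for example $n=2$.
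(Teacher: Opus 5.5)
Your proposal is correct and takes essentially the paper's route. The paper only asserts that the interlacing and the counter-cyclical order are a routine consequence of the index sets $\Ji$ in \eqref{Jistardef} and Lemma \ref{l:zerosinsame}, deferring the details to de Boor--Pinkus; your merged-sequence walk supplies exactly those details: the lone $P_j$-zero in $I_i$ and the lone $P_i$-zero in $I_j$ absorb the two order switches, and the outer zeros in $I_0$, $I_{n+1}$ are checked at the ends. One small imprecision: a lone $I_0$-zero of $P_i$ occurs for $i<r<j$, for $i<j=r$ with $a_r=0$, or for $i=r<j$ with $a_r\neq0$, not for all $i<r\le j$ (when $j=r$ and $a_r\ne 0$ both functions have an $I_0$-zero); also, the chain should close with $P_{r+1}$, as you in fact derive.
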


\begin{proof} To infer the interlacing and precedence ordering of the root sequences of our polynomials from the zero locations established in Lemma \ref{l:zerosinsame} is a routine derivation, exactly matching that of the classical case, see e.g., \cite{CBoorPinkus} or, for a more detailed deduction, \cite{SzAMH}. Therefore, we save the reader from the details here.
\end{proof}

Finally, we get the ordering of all the roots of all the $P_i$.

\begin{corollary}\label{c:order_roots_total}
The roots $y_k^{(i)}$ ($k=0,\dots,n+1$, $i=1,\dots,n+1$) have the following order:
\begin{align}\label{order_total}
(y_0^{(r)}<)~y_0^{(r-1)}&<\dots<y_0^{(1)}< x_0 <y_1^{(n+1)}<y_1^{(n)}<\dots<y_1^{(2)} \notag
\\ & < x_1 < \dots < x_n <y_{n+1}^{(n)}<\dots<y_{n+1}^{(r+1)},
\end{align}
where $y_0^{(r)}$ exists, if and only if $\deg P_r=n$.
\end{corollary}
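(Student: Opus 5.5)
The corollary is a bookkeeping consequence of Lemma \ref{l:zerosinsame} and the description \eqref{Jistardef} of the index sets $\Ji$. I would treat the three kinds of intervals separately: the left outer interval $I_0=(-\infty,x_0)$, an inner interval $I_k$ with $1\le k\le n$, and the right outer interval $(x_n,\infty)$. On each I would first list which $P_i$ have a root there, and then compare any two such roots using Lemma \ref{l:zerosinsame}.

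For $I_0$, by \eqref{Jistardef} exactly the indices $i<r$ contribute, together with $i=r$ when $a_r\ne0$, that is, when $\deg P_r=n$. For $i<j$ both in this range, $0$ does not lie in $\{i+1,\dots,j-1\}$, so $0\in\Kij$. Lemma \ref{l:zerosinsame} then gives $y_0^{(j)}<y_0^{(i)}$, which yields $y_0^{(r)}<y_0^{(r-1)}<\dots<y_0^{(1)}<x_0$.

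The right outer interval is handled the same way. Here the contributors are $r<i\le n$, and also $i=n+1$ formally, but $P_{n+1}$ has no root beyond $x_n$. Since $n+1\in\Kij$ for $r<i<j\le n$, the lemma gives $y_{n+1}^{(n)}<\dots<y_{n+1}^{(r+1)}$.

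For an inner interval $I_k$, every $P_i$ with $i\ne k$ has exactly one root there. Take $i<j$ with $i,j\ne k$. If $i<k<j$, the lemma gives $y_k^{(i)}<y_k^{(j)}$. If both $i,j<k$ or both $i,j>k$, then $k\in\Kij$, so $y_k^{(j)}<y_k^{(i)}$. The resulting order on $I_k$ is therefore
\[
y_k^{(k-1)}<\dots<y_k^{(1)}<y_k^{(n+1)}<y_k^{(n)}<\dots<y_k^{(k+1)},
\]
and this is consistent with the displayed chains. For $k=1$ it reads $y_1^{(n+1)}<\dots<y_1^{(2)}$.

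Finally, the intervals $I_0,\dots,I_{n+1}$ are ordered by the nodes, so concatenating these blocks with the nodes between them gives \eqref{order_total}. The claim that $y_0^{(r)}$ exists if and only if $\deg P_r=n$ is read off directly from \eqref{Jistardef}.

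I see no real obstacle here. The only point needing care is checking that the two cases of Lemma \ref{l:zerosinsame} cover every pair within one interval; the note after that lemma guarantees this. One should also check that the ordering inside inner intervals beyond $I_1$ is implicitly part of the intended total ordering, or alternatively follows from the counter-cyclic ordering in Lemma \ref{l:interlacing}.
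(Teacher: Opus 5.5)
Your proposal is correct and follows the paper's route: the paper states this corollary without proof, as an immediate consequence of Lemma~\ref{l:zerosinsame} (together with the interlacing of Lemma~\ref{l:interlacing}) and the index sets in \eqref{Jistardef}. You simply write out that bookkeeping interval by interval, and the bookkeeping is sound, including the handling of $P_{n+1}$ on $I_{n+1}$ and of $P_r$ when $a_r=0$.
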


\subsection{Interlacing of roots of the derivatives $P_i'$}\label{sec:Markov}

Now, we are in a position to describe the location of the zeroes of the \emph{derivatives} $P_i'$ of the functions $P_i$. To do this we need an appropriate generalization of a famous lemma of V. A. Markov \cite{Markov} on the interlacing of the roots of the derivatives of polynomial functions with interlacing roots. In the 2000's there appeared nice extensions of Markov's interlacing lemma to more general Extended Chebyshev-Haar systems, see, e.g., \cite{Bojanov2002, BN2002, MN}. In \cite{MN} the authors proved such a result for exponentially weighted polynomials.

\begin{theorem}[Milev-Naidenov]\label{Milev_Naidenov}
Let $\WW_{n}:=\{wp~:~ p\in \Pn\}$, with $w(t)=\exp(-t)$, be the space of exponentially weighted polynomials. Let $f$ and $g$ be two oscillating polynomials\footnote{A function from an ECHS is called an \emph{oscillating polynomial} on $\II$ if it has $n$ simple real zeroes in $\II$.} of $\WW_n$ having zeroes $u_1<\ldots<u_{n}$ and $v_1<\ldots<v_{n}$, respectively and satisfying the interlacing property $u_1\le v_1\le u_2\le\dots\le v_{n-1} \le u_{n}\le v_{n}$.

Then we also have the similar interlacing property for the roots of the derivatives $f', g'$. Moreover, if some of the majorization inequalities $u_i\le v_i$ holds with strict inequality, i.e., $u_i<v_i$, then also the inequalities between the roots of derivatives are strict.
\end{theorem}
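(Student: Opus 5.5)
We prove the Milev--Naidenov theorem by a continuous deformation of $g$ into $f$, using the fact that along the way the derivative zeros cannot collide. Write $f=w p$ and $g=w q$ with $p,q\in\Pn$, each having $n$ simple real zeros. Since $w=e^{-t}$, we have $f'=w\,(p'-p)$. So the zeros of $f'$ are the zeros of $Dp:=p'-p$, a polynomial of exact degree $n$ whose leading coefficient is that of $-p$. Rolle's theorem, applied to $e^{-t}p$ between consecutive zeros and on $(u_n,\infty)$ (where $e^{-t}p\to0$), shows that $Dp$ has $n$ simple zeros $\xi_1<\dots<\xi_n$ with $u_k<\xi_k<u_{k+1}$ and $\xi_n>u_n$. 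This is Property~(P) with $\de_0=0$, $\de_n=1$. The same holds for $g$, with derivative zeros $\eta_1<\dots<\eta_n$. The claim to prove is $\xi_1\le\eta_1\le\xi_2\le\dots\le\xi_n\le\eta_n$, with strict inequalities when some $u_i<v_i$.

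Normalize $p$ and $q$ to be monic up to sign so that they have the same leading sign, and consider the homotopy $v_i(s)=(1-s)u_i+sv_i$ for $s\in[0,1]$, with $q_s=\prod(t-v_i(s))$. Each family $\{v_i(s)\}$ still interlaces with $\{u_i\}$ in the same way, since $u_i\le v_i(s)\le u_{i+1}$. Let $\eta_k(s)$ denote the zeros of $Dq_s$; these are simple, hence continuous in $s$. The key local claim is the following: if $\{u_i\}$ and $\{v_i\}$ interlace as in the hypothesis with some $u_i<v_i$, then $\xi_k<\eta_k$ for all $k$. Equivalently, we need $\xi_k<\eta_k<\xi_{k+1}$, that is, the derivative zeros interlace strictly.

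For the key claim I would use the classical Markov argument adapted to $D$. Consider the pencil $r_\lambda=\lambda p+(1-\lambda)q$ for $\lambda\in[0,1]$. Because the zeros interlace in the same orientation, every $r_\lambda$ has $n$ real simple zeros. This follows from the standard sign-counting at the points $u_i,v_i$: $p$ and $q$ have opposite signs on alternate gaps, and the degree is preserved because the leading coefficients have the same sign. Hence $D r_\lambda=\lambda Dp+(1-\lambda)Dq$ also has $n$ real simple zeros by the Rolle argument above. Now suppose some $\xi_k=\eta_m$ with the interlacing of $\xi$ and $\eta$ violated. Then there is a point $\tau$ where $Dp(\tau)$ and $Dq(\tau)$ have the configuration that forces a double zero of some $Dr_\lambda$: either both vanish at $\tau$, or the Wronskian-type quantity $Dp\,(Dq)'-Dq\,(Dp)'$ changes sign. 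A double zero of $Dr_\lambda$ contradicts simplicity. Concretely, I would show that $W:=Dp\,(Dq)'-(Dp)'\,Dq$ keeps a constant sign on $\RR$. Two polynomials of degree $n$ have strictly interlacing real zeros exactly when every real linear combination has only real simple zeros, and this is equivalent (Hermite--Kakeya--Obreschkoff) to $W$ having constant sign. Applying that theorem to $p,q$ gives the interlacing of the pencil; applying it back to $Dp,Dq$, whose combinations $D r_\lambda$ have only real simple zeros, gives the interlacing of $\xi$ and $\eta$. Running the pencil over $\lambda\in\RR$ rather than $[0,1]$ uses only that $Dr_\lambda$ stays real-rooted, which again follows from $r_\lambda$ being real-rooted; degenerate $\lambda$, where the degree drops, are handled by the same Rolle argument with $\de_n=0$.

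The orientation (that $\xi_k\le\eta_k$ rather than the reverse) then follows by continuity in the homotopy $s$ from the coincident case $s=0$ together with a first-order computation. By Lemma-\ref{l:magic}-style implicit differentiation, moving a single zero $v_i$ to the right moves every zero of $Dq$ to the right. The non-strict version of the theorem follows by taking limits. The main obstacle is making the Obreschkoff step rigorous when some $u_i=v_i$, since then $p$ and $q$ share a factor. I would first factor out the common zeros $c$: for a common zero, $D\bigl((t-c)\tilde p\bigr)=(t-c)D\tilde p+\tilde p$. I expect to treat this boundary case by perturbation and limits rather than by a direct argument.
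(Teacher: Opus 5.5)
\textbf{Verdict: the strict half of the theorem is not proved when $p$ and $q$ share a zero.}

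Your argument handles the strictly interlacing case ($u_i<v_i<u_{i+1}$ for all $i$):
\begin{itemize}
\item By Hermite--Kakeya--Obreschkoff every nontrivial real combination $r=\alpha p+\beta q$ is real-rooted. Since $pq'-p'q$ has no real zero, the zeros of $r$ are simple.
\item $D=\frac{d}{dt}-1$ sends such $r$ to a real-rooted polynomial with simple zeros. (When the degree drops, the right outer Rolle zero persists because $e^{-t}r\to0$ at $+\infty$ for every degree, so that case is $\de=1$, not $\de=0$.)
\item Hence $Dp\,(Dq)'-(Dp)'\,Dq$ has no real zero, and the $\xi_k$ and $\eta_k$ interlace strictly.
\item Your homotopy fixes the orientation, via $\partial\eta/\partial v_i=-q(\eta)/((\eta-v_i)^2(Dq)'(\eta))>0$; this is positive because $q(\eta)$ and $(Dq)'(\eta)$ have opposite signs.
\item Limits then give the weak statement in general.
\end{itemize}

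The gap is the second assertion of the theorem in the remaining case: $u\ne v$, but $p$ and $q$ share a zero $c$ (either $c=u_k=v_k$ or $c=v_k=u_{k+1}$).
\begin{itemize}
\item Your claim that every $r_\lambda$ has simple zeros is false here. Writing $r_\lambda=(t-c)(\lambda\tilde p+(1-\lambda)\tilde q)$, it has a double zero at $c$ whenever $\lambda\tilde p(c)+(1-\lambda)\tilde q(c)=0$.
\item For $c=v_k=u_{k+1}$ this $\lambda$ even lies in $(0,1)$. For example, $p=t(t-1)$ and $q=(t-1)(t-2)$ give $r_{1/2}=(t-1)^2$.
\item Your fallback, perturbation and passage to the limit, cannot repair this. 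Limits only yield $\xi_k\le\eta_k\le\xi_{k+1}$, and strictness is exactly what they lose.
\end{itemize}

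\textbf{The missing ingredient.} Take $p,q$ monic and expand $q/p=1+\sum_k A_k/(t-u_k)$ with $A_k=q(u_k)/p'(u_k)$. All $A_k$ have one sign and are not all zero. It follows that $pq'-p'q$ vanishes only at common zeros of $p$ and $q$.

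\textbf{Quickest finish.} The paper does not prove this theorem itself; it quotes it from Milev--Naidenov and attributes their proof to Bojanov's idea, which it adapts in Lemma~\ref{l:Bojanov_exp}. That idea finishes the job:
\begin{itemize}
\item If $f'(\tau)=g'(\tau)=0$, then $0=(fg'-f'g)(\tau)=e^{-2\tau}(pq'-p'q)(\tau)$, so $\tau$ is a common zero of $p$ and $q$.
\item But there $f'(\tau)=e^{-\tau}p'(\tau)\ne0$.
\item So $f'$ and $g'$ have no common zero. Combined with the weak interlacing you already have, every inequality between the $\xi_k$ and the $\eta_k$ is strict.
\end{itemize}

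\textbf{Finish inside your pencil framework.}
\begin{itemize}
\item $D$ lowers the multiplicity of each zero of $r$ by exactly one and otherwise adds only simple zeros (one per gap and one to the right). So $Dr$ has a multiple zero only if $r$ has a zero of multiplicity at least $3$.
\item This cannot happen. A double zero of $r$ can sit only at a common zero $c$.
\item At such a $c$, $\tilde p=p/(t-c)$ and $\tilde q=q/(t-c)$ still interlace and $\tilde p(c)\ne0$, so $(\tilde p\tilde q'-\tilde p'\tilde q)(c)\ne0$. Hence no combination of $\tilde p,\tilde q$ has a double zero at $c$, and no $r$ has a triple zero there.
\item Therefore $Dp\,(Dq)'-(Dp)'\,Dq$ has no real zero even in the degenerate case.
\end{itemize}
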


However, in the exponentially weighted setting, additional work is needed in order to obtain a complete description of the root interlacing of the derivatives $P_i'$. Indeed, as observed above, the degree of $P_r$ is not necessarily $n$, but it may drop to $n-1$ whenever $a_r=0$. Consequently, in order to apply a Markov-type argument to the pair $(P_i,P_r)$, one needs a corresponding interlacing result in the case when one weighted polynomial is oscillating, while the other has only $n-1$ real and simple zeros (in which case we will call it a ``\emph{nearly oscillating}'' polynomial).

Somewhat surprisingly, such a result is currently known only in the classical algebraic setting and for weighted polynomials with the Hermite weight, where it was established by Milev and Naidenov in \cite[Lemma~4]{MN2006}. Their proof relies on an important idea going back to Bojanov \cite[Lemma~1]{Bojanov1999}: if two strictly oscillating elements of a differentiable ECH-system have ``strictly'' interlacing zeros (meaning that the two sequences of the zeros are interlacing, but not identical), then their derivatives cannot have a common zero. In the Hermite-weighted setting, the corresponding statement appears in \cite[Corollary~3]{MN2006}. In Bojanov's original setting, however, both functions were assumed to be oscillating.

Our next goal is to extend this principle to the present setting, where one function is oscillating while the other is only nearly oscillating. This yields the corresponding extension of the Markov-type theorem for weighted polynomials associated with a general $C^1$ weight, which in particular yields a natural generalization of the result of Milev and Naidenov.

\begin{lemma}\label{l:Bojanov_exp}
Let $w\in C^1(\II)$ be an arbitrary (positive) weight function, and
$
\WW_n:=\{wp:\ p\in \mathcal P_n\}.
$
Suppose that $f,g\in \WW_n$, where $f$ is oscillating with zeros $u_1<\dots<u_n$ and $g$ is nearly oscillating having $n-1$ real and simple zeros $v_1<\dots<v_{n-1}$, which interlace with the zeros of $f$, i.e., $u_1\le v_1\le u_2\le\dots \le u_{n-1}\le v_{n-1}\le u_{n}$.
Then $f'$ and $g'$ have no common zeros.
\end{lemma}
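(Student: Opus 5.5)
The plan is to strip off the weight and reduce the claim to a Wronskian statement about two real-rooted algebraic polynomials with interlacing zeros, in the spirit of Bojanov's argument. Write $f=wp$ and $g=wq$ with $p,q\in\Pn$. The polynomial $p$ has the $n$ real simple zeros $u_1<\dots<u_n$, so $\deg p=n$. The polynomial $q$ has the $n-1$ real simple zeros $v_k$ and is not identically zero. I will check that $\deg q=n-1$: if $\deg q$ were $n$, then its remaining zero would also be real, since non-real zeros of a real polynomial come in conjugate pairs. So the $n$-th zero would be real and additional, contradicting that $g$ has exactly $n-1$ real zeros. (If the intended reading allows this, I would first reduce to $\deg q=n-1$ in this way.)

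Suppose, for contradiction, that $f'(\tau)=g'(\tau)=0$. Since $f'=w\,(p'+\rho p)$ with $\rho:=w'/w$ continuous, this gives
\[
p'(\tau)+\rho(\tau)p(\tau)=0,\qquad q'(\tau)+\rho(\tau)q(\tau)=0 .
\]
Hence the vector $(1,\rho(\tau))$ is a nontrivial solution of a homogeneous $2\times2$ system, and the Wronskian vanishes:
\[
W(\tau):=p(\tau)q'(\tau)-p'(\tau)q(\tau)=0 .
\]
Note also that $\tau$ cannot be a common zero of $p$ and $q$. Indeed, if $p(\tau)=0$, then $p'(\tau)=-\rho(\tau)p(\tau)=0$, contradicting the simplicity of the zeros of $p$.

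Next I will handle the non-strict interlacing. The common zeros $u_i=v_k$ are simple for both polynomials. Let $c$ be the monic polynomial with exactly these common zeros, and write $p=cr$, $q=cs$. Then $W(p,q)=c^2W(r,s)$, and $c(\tau)\ne0$ by the remark above, so $W(r,s)(\tau)=0$. After cancelling the common zeros, $r$ of degree $m$ and $s$ of degree $m-1$ have real simple zeros that still interlace, now strictly: between consecutive zeros of $r$ there is exactly one zero of $s$. (The case $m=0$, i.e.\ $r,s$ constants, cannot occur, because $\deg r-\deg s=1$.) I expect this bookkeeping, i.e.\ checking that removing coincident zeros preserves the interlacing pattern with degree gap one, to be the only delicate point. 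It follows from the chain $u_1\le v_1\le u_2\le\dots\le v_{n-1}\le u_n$ by deleting matched pairs.

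The final step is a partial fraction argument. Decompose
\[
\frac{s}{r}=\sum_{j}\frac{A_j}{t-u_j'},\qquad A_j=\frac{s(u_j')}{r'(u_j')}\neq0,
\]
where the $u_j'$ are the zeros of $r$. Strict interlacing makes $s(u'_j)$ alternate in sign, and $r'(u'_j)$ alternates as well, so all $A_j$ have the same sign. Consequently
\[
\left(\frac{s}{r}\right)'(t)=-\sum_j\frac{A_j}{(t-u'_j)^2}\neq0
\]
for every $t$ off the zeros of $r$, and there $W(r,s)=r^2\,(s/r)'\ne0$. At a zero $u'_j$ of $r$ we have $W(r,s)(u'_j)=-r'(u'_j)s(u'_j)\ne0$. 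Thus $W(r,s)$ has no real zero, which contradicts $W(r,s)(\tau)=0$ and proves that $f'$ and $g'$ have no common zeros.
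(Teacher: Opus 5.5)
Your proof is correct and follows essentially the paper's route: strip the weight, reduce to the polynomial Wronskian $pq'-p'q$, and use the partial fraction expansion of $q/p$, whose residues all have one sign. The differences are minor. The paper does not cancel common zeros; it normalizes $p,q$ to be monic and uses $A_k\ge 0$ for all $k$, with $A_j>0$ for some $j$ since $q$ cannot vanish at all $n$ points $u_k$. Your observation that $\tau$ cannot be a zero of $p$ covers the common zeros of $f$ and $g$, where the paper's claim that the Wronskian is negative for all $t$ is slightly overstated (it vanishes there, harmlessly).
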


\begin{proof}
Let \(f,g\in \mathcal W_n\) such that the zeros of \(f\) and \(g\) interlace, and assume that there exists at least one index $k$ with $u_j  \notin\{v_1,\ldots, v_{n-1}\}$. To prove that the interlacing of the derivative zeros is strict, we can consider the Wronskian $W(f,g):=fg'-f'g$. Namely, if we can prove that $W(f,g)(t)\ne 0$, for any $t$, then it follows immediately that there is no point $s$ at which both $f'(s)=0$ and $g'(s)=0$, i.e., $f'$ and $g'$ have no common zeros.
Write
\[
f(t)=w(t)P(t), \qquad g(t)=w(t)Q(t),
\]
where
\[
P(t)=\prod_{j=1}^n (t-u_j), \qquad
Q(t)=\prod_{j=1}^{n-1}(t-v_j).
\]
Then, the common factor $ww'PQ$ cancels, and hence
\[
W(t)=w^2(t)\bigl(P(t)Q'(t)-P'(t)Q(t)\bigr),
\]
i.e., it is enough to show that $\frac{W(t)}{w^2(t)}=(P(t)Q'(t)-P'(t)Q(t))\neq 0$.

From here, after removing the weights, the proof is standard, but we briefly include it to make the exposition self-contained. The partial fraction decomposition yields
\begin{equation}\label{part_frac}
\frac{Q(t)}{P(t)}=\sum_{k=1}^n \frac{A_k}{t-u_k}, \qquad \text{where} \quad A_k=\Res\left[\frac{Q}{P};u_k\right]=\frac{Q(u_k)}{P'(u_k)}.
\end{equation}
Moreover, there exists at least one index $j$, such that $u_j\notin \{v_1,\ldots,v_{n-1}\}$, and for any such index we must have $A_j\ne 0$.

By construction, $(-1)^{n-k} P'(u_k)>0$ and $(-1)^{n-k} Q(u_k)\ge 0$  for all $k=1,\ldots,n$, and there exists an index $j$ with $(-1)^{n-j} Q(u_j)> 0$.
Therefore, $A_k=\frac{Q(u_k)}{P'(u_k)}\ge 0$ for all $k=1,\ldots,n$,
and as there is some $j$ with $A_j>0$, too, we are led to
\[
\left(\frac{Q(t)}{P(t)}\right)'= -\sum_{k=1}^n \frac{A_k}{(t-u_k)^2}<0.
\]
Thus we find
\[
\frac{W(t)}{w^2(t)}=P(t)Q'(t)-P'(t)Q(t)=P(t)^2\left(\frac{Q(t)}{P(t)}\right)'<0
\]
for all $t$, which completes the proof.
\end{proof}

We are now in a position to prove the Markov interlacing property for two exponentially weighted polynomials, one having $n$ zeros and the other $n-1$. The argument follows the classical idea used in \cite{Rivlin1974} to extend the original Markov theorem.

\begin{theorem}\label{Markov_exp_Pr}
Let $w\colon \II\to \RR$ be an arbitrary weight such that the spaces $\WW_{n}$, as well as $\W_{n-1}$ of weighted polynomials of degree at most $n$ resp. $n-1$ have Property (P). Let $f\in \WW_n$ be an oscillating polynomial and let $g\in\WW_{n-1}$ have $n-1$ real roots in $[0,\infty)$.
Suppose that the zeroes $u_i$ of $f$ and $v_j$ of $g$ interlace: $u_1\le v_1\le u_2\le\dots \le u_{n-1}\le v_{n-1}\le u_{n}$. Then, we also have a similar, but always strict interlacing property for the roots of $f'$ and $g'$.
\end{theorem}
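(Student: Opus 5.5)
We follow the classical continuity (homotopy) argument of \cite{Rivlin1974}, with Lemma \ref{l:Bojanov_exp} as the key ingredient that forbids coincidence of derivative zeros along the deformation. By Property (P) for $\WW_n$ (with $\de_0=0$, $\de_n=1$), $f'$ has exactly $n$ simple zeros $\xi_1<\dots<\xi_n$, with $\xi_k\in(u_k,u_{k+1})$ for $k<n$ and $\xi_n\in(u_n,\infty)$. By Property (P) for $\WW_{n-1}$, $g'$ has exactly $n-1$ simple zeros $\eta_1<\dots<\eta_{n-1}$, with $\eta_k\in(v_k,v_{k+1})$ for $k\le n-2$ and $\eta_{n-1}\in(v_{n-1},\infty)$.

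The claim to prove is the strict interlacing
\[
\xi_1<\eta_1<\xi_2<\dots<\eta_{n-1}<\xi_n .
\]
We reduce to a reference configuration and deform. Normalize $f=wP$ and $g=wQ$ with monic $P,Q$, as in the proof of Lemma \ref{l:Bojanov_exp}. For $\tau\in[0,1]$ put $v_k(\tau):=(1-\tau)v_k^0+\tau v_k$, where the starting configuration $v_k^0$ is chosen with $u_k<v_k^0<u_{k+1}$. Then $g_\tau:=w\prod_k(t-v_k(\tau))$ stays nearly oscillating, its zeros keep interlacing with those of $f$ for every $\tau$, and at least one $u_j$ is not among the zeros of $g_\tau$ (there are $n$ of the $u$'s and only $n-1$ of the $v$'s). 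So Lemma \ref{l:Bojanov_exp} applies for all $\tau$: $f'$ and $g_\tau'$ never share a zero.

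The zeros of $g_\tau'$ are simple by Property (P), so they depend continuously on $\tau$ (implicit function theorem). They also stay in a compact set: they lie between $v_1(\tau)$ and a bound for $\eta_{n-1}(\tau)$, which is controlled because $g_\tau'/w=Q_\tau'+(w'/w)Q_\tau$ varies continuously in $\tau$ with the leading behaviour fixed. Since no zero of $g_\tau'$ can cross any $\xi_k$, the relative order of $\{\xi_k\}$ and $\{\eta_k(\tau)\}$ is constant in $\tau$, and we only need it at $\tau=0$.

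At $\tau=0$ the order comes from Rolle-type counting. From $u_k<v_k^0<u_{k+1}$ one first gets $\eta_k(0)\in(v_k^0,v_{k+1}^0)$, and we must pin it against $\xi_{k+1}$. Rather than do this by hand, we use a second homotopy: let $v_k^0\to u_{k+1}$ from the left (a degenerate limit), or compare with $f$ directly. The sign argument goes as follows. On $(u_{k+1},u_{k+2})$ the Wronskian $fg'-f'g$ has a fixed sign, by the proof of Lemma \ref{l:Bojanov_exp}. At $\xi_{k+1}$ this gives $f(\xi_{k+1})g'(\xi_{k+1})\neq0$ with a known sign, which determines the sign of $g'(\xi_{k+1})$. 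Comparing that with the alternating signs of $g'$ between its consecutive zeros locates exactly one $\eta$ in each $(\xi_k,\xi_{k+1})$.

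This Wronskian sign argument can in fact be used directly, with no homotopy at all. The Wronskian is $<0$ everywhere. At each $\xi_k$ we have $-f'(\xi_k)g(\xi_k)+f(\xi_k)g'(\xi_k)<0$, hence $f(\xi_k)g'(\xi_k)<0$. Since $\sign f(\xi_k)$ alternates in $k$, so does $\sign g'(\xi_k)$, and this forces a zero of $g'$ in each of the $n-1$ gaps $(\xi_k,\xi_{k+1})$. There are exactly $n-1$ such zeros, so these are all of them, giving strict interlacing. Symmetrically, evaluating at the $\eta_k$ gives $g(\eta_k)f'(\eta_k)>0$, which confirms that $\xi_1<\eta_1$ and $\eta_{n-1}<\xi_n$.

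The main obstacle is making sure the Wronskian sign conclusion of Lemma \ref{l:Bojanov_exp} holds at every point, including under degenerate interlacing where some $v_k=u_k$ or $v_k=u_{k+1}$. It does: the residues $A_k\ge0$ with at least one positive, and at coincident points the computation of $W=P^2(Q/P)'$ extends by continuity, since $W$ is a polynomial times $w^2$.
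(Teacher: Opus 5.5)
Your final argument, the direct Wronskian one, is correct. For the key sign step it takes a shorter route than the paper.

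\textbf{Comparison with the paper.} The paper follows Rivlin's template. It raises the degree of $g$ via $\tilde g=g\cdot(t-v_n)$, so that $\tilde g-f\in\W_{n-1}$, and expands this in the Lagrange basis at the zeros $u_k$ of $f$. Differentiating at a zero $z$ of $f'$ gives $\bigl(g'(z)(z-v_n)+g(z)\bigr)/f(z)>0$. It then lets $v_n\to\infty$, invoking Lemma \ref{l:Bojanov_exp} to know $g'(\tau_i)\neq 0$ so that the sign survives the limit.

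You instead read off $f(\xi_k)g'(\xi_k)=W(\xi_k)<0$ at each zero $\xi_k$ of $f'$, directly from the partial-fraction computation in the proof of Lemma \ref{l:Bojanov_exp}. The two routes rest on the same identity. Dividing the paper's relation by $-v_n$ and letting $v_n\to\infty$ gives exactly $g'(z)/f(z)=(Q/P)'(z)=-\sum_k A_k/(z-u_k)^2$, using $g(u_k)/f'(u_k)=A_k$. Your version therefore shows that the auxiliary root $v_n$ and the limiting process are dispensable. Strictness also comes for free, without a separate appeal to the no-common-zero conclusion. The final counting step via Property (P) is the same in both proofs.

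\textbf{Three things to fix.}

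First, your closing claim that $W<0$ at every point, ``extended by continuity'' at coincident zeros, is false. If $u_k=v_j$, then $W(u_k)=f(u_k)g'(u_k)-f'(u_k)g(u_k)=0$; equivalently $A_k=0$, so $W(u_k)/w^2=-A_kP'(u_k)^2=0$. In general $W\le 0$, with equality exactly at common zeros of $f$ and $g$. This does not damage your proof, because you only need $W<0$ at zeros of $f'$. There $f\neq 0$, since $f$ has simple zeros, so $P^2(Q/P)'<0$ because some $A_j>0$. Your extra evaluation at the $\eta_k$ is redundant anyway: having one zero of $g'$ in each gap $(\xi_k,\xi_{k+1})$ already gives $\xi_1<\eta_1$ and $\eta_{n-1}<\xi_n$.

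Second, you only treat $\de_0=0$, $\de_n=1$, while the statement allows any Property (P). Your sign argument works verbatim at every zero of $f'$, including a left outer one when $\de_0=1$. The zero counts then match, provided $\W_{n-1}$ has the same $\de_0,\de_n$ as $\WW_n$; the paper tacitly assumes this too.

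Third, drop the homotopy paragraphs. They assert unproved facts, such as continuity and boundedness of the zeros of $g_\tau'$, and mention an unspecified ``second homotopy''. None of this is needed.
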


\begin{proof}
Let $f(t)=w(t)\prod\limits_{j=1}^{n}(t-u_j)$ and $g(t)=w(t)\prod\limits_{j=1}^{n-1}(t-v_j)$.
We introduce $\tilde{g}(t):=g(t)(t-v_n)$, where $v_n$ is taken to satisfy $u_n<v_n$ (and we will consider eventually $v_n\to \infty$). Then, taking into account cancellation of the matching main terms, $\tilde{g}-f\in\WW_{n-1}$, a degree one smaller exponential polynomial. Thus the degree $n-1$ Lagrange interpolation projection, $\LL_{n-1}(w,\uu,\cdot)$-- the fundamental functions now denoted $h_k$ similarly as usually for degree $n$--can be used to represent $\tilde{g}-f\in\WW_{n-1}$. This yields
\begin{equation}\label{g-f}
(\tilde{g}-f)(t)=\sum\limits_{k=1}^{n} (\tilde{g}-f)(u_k) h_k(\uu,t).
\end{equation}
Observe that the interpolatory basis functions can be written as
$$
h_k(\uu,t)=\dfrac{f(t)/(t-u_k)}{w(u_k) \prod_{j=1,j\not=k}^{n}(u_k-u_j)} = \frac{f(t)}{f'(u_k)(t-u_k)}.
$$
Thus we get that
\begin{equation*}\label{dist_gf}
(\tilde{g}-f)(t)=\sum\limits_{k=1}^{n} \frac{(\tilde{g}-f)(u_k)}{f'(u_k)}\cdot\frac{f(t)}{t-u_k},
\end{equation*}
in other words
\begin{equation}\label{dist_gf2}
g(t)(t-v_n)-f(t)=\sum\limits_{k=1}^{n} \frac{g(u_k)(u_k-v_n)-f(u_k)}{f'(u_k)}\cdot\frac{f(t)}{t-u_k} = \sum\limits_{k=1}^{n} \frac{g(u_k)(u_k-v_n)}{f'(u_k)}\cdot\frac{f(t)}{t-u_k}
.
\end{equation}
Note that the formula is certainly valid for $t\ne u_k$, but when $t=u_k$ then it is of the form $0/0$, and so $f(t)/(t-u_k)|_{t=u_k}$ may be interpreted as the limit $f'(u_k)$.

Now, assume that $f'(z)=0$. Then $f(z)\not=0$ (as $f$ has $n$ different, hence simple zeros). Taking the derivative of both sides of \eqref{dist_gf2} and evaluating them at $t=z$, we get that
\begin{equation}\label{g-f_der}
g'(z)(z-v_n)+g(z)=-\sum_{k=1}^{n}\frac{g(u_k)(u_k-v_n)}{f'(u_k)}\cdot\frac{f(z)}{(z-u_k)^2}.
\end{equation}
Note that $(-1)^{n-k} f'(u_k)>0$ and $u_k-v_n<0$, for all $k=1,\ldots,n$. Further, $(-1)^{n-k} g(u_k)\ge 0$, and at least for one index $k$ we necessarily have strict inequality here, given that $g$ has only $n-1$ zeros. As a result, \eqref{g-f_der} guarantees that
\begin{equation}\label{positive}
\frac{g'(z)(z-v_n)+g(z)}{f(z)} >0
\end{equation}
for every zero $z$ of $f'$. Now for every $i=1,\ldots,n-1$ there exists one zero of $f'$ in $(u_i,u_{i+1})$. Let $\tau_i$ denote these roots. Therefore, the sign of $f(\tau_i)$ can be calculated, too and we get that
\begin{equation}\label{sign_f_tau}
\sign(f(\tau_i))=(-1)^{n-i}.
\end{equation}
From here the positivity of the function \eqref{positive} at the zero $z=\tau_i$ of $f'$ implies, that $\sign(g'(\tau_i)(\tau_i-v_n)+g(\tau_i))=(-1)^{n-i}$. Moreover, $g(\tau_i)$ is bounded and if $v_n\to \infty$, then $\tau_i-v_n\to -\infty$, and $g'(\tau_i) \ne 0$ by Lemma \ref{l:Bojanov_exp}, hence we conclude $\sign(g'(\tau_i)(\tau_i-v_n)+g(\tau_i)) =\sign(g'(\tau_i)(\tau_i-v_n))=(-1)^{n-i}$ for all sufficiently large $v_n$, that is,
\begin{equation}\label{sign_g_der_tau}
\sign(g'(\tau_i))=(-1)^{n-i+1}.
\end{equation}
Thus, we obtain at least $n-2$ zeros $\sigma_i\in (\tau_i,\tau_{i+1})$ of $g'$.

From this point on, the proof naturally splits into four cases according to the possible values of $\de_0$ and $\de_n$ from Property (P).

\medskip

\textbf{Case 1:} $\de_0=\de_n=0$.

In this case, $f'$ has no further zeros besides the already identified $\tau_1,\dots,\tau_{n-1}$. Moreover, since $g\in \W_{n-1}$ and Property (P) holds for $\W_{n-1}$ with $\de_0=\de_{n-1}=0$, we also have that $g'\in \W'_{n-1}$ cannot have more than $n-2$ zeros, and therefore the already found zeros
\[
\sigma_1,\dots,\sigma_{n-2},
\qquad
\sigma_i\in (\tau_i,\tau_{i+1}),
\]
are all the zeros of $g'$.

\medskip

\textbf{Case 2:} $\de_0=0$ and $\de_n=1$.

Now $f'$ possesses one additional zero $\tau_n\in (u_n,\infty).$ Furthermore, for every zero of $f'$, including $\tau_n$, we still have the sign property \eqref{sign_f_tau}.
Consequently, equation \eqref{positive}, evaluated at the zero $z=\tau_i$ of $f'$, implies that
\[
\sign\bigl(g'(\tau_i)(\tau_i-v_n)+g(\tau_i)\bigr)=(-1)^{n-i}.
\]
Exactly as before, this yields that the equation \eqref{sign_g_der_tau} is also satisfied for
$i=1,\dots,n$. Therefore, there exists an additional zero $\sigma_{n-1}\in (\tau_{n-1},\tau_n)$ of $g'$.

Since now $\delta_{n-1}=1$, and $g\in \W_{n-1}$, therefore also $g'$ belongs to $\W'_{n-1}$ and  $g'$ can have at most $n-1$ zeros altogether. Thus the zeros already found are all the possible zeros $\sigma_i\in (\tau_i,\tau_{i+1})$ ($i=1,\ldots, n-1$) of $g'$. Therefore, we get the interlacing property of the zeros of $f'$ and $g'$.

\medskip

The remaining two cases, namely when $\de_0=1$, $\de_n=0$, and when $\de_0=\de_n=1$, can be treated in a completely analogous manner. In each case the same conclusion follows.
\end{proof}

In the following let us denote the root sequences, arranged in increasing order, of the derivatives $P_i'$ as $W_i$. Recall that $\# W_i=n$ except possibly for $i=r$, where $\# W_r=n-1$ may occur in case $a_r=0$ and $\deg P_r=n-1$. The main goal of our analysis thus far was to obtain the following result.

\begin{corollary}\label{cor:derivativeroots} If $w(t)=\exp(-t)$ is the exponential weight, then the root sequences $W_i$ of the derivatives $P_i'$ of the weighted polynomials $P_i$ defined by $P_i:=L_{n+1}|_{I_i}$ on $I_i$ strictly interlace.

If $\deg P_r=n$, then the interlacing is in the order $W_r\prec W_{r-1}\prec\dots\prec W_1 \prec W_{n+1} \prec W_n \prec \dots \prec W_{r+1}$.

If $\deg P_r=n-1$, then the ordering remains the same except for $W_r$, which has $\# W_r=n-1$ roots, and these zeroes are strictly interlacing with the $n$ zeros of $W_i$ for any $i \ne r$.
\end{corollary}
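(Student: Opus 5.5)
The plan is to derive the ordering of derivative roots pairwise from the ordering of the roots of the $P_i$ themselves (Lemma \ref{l:interlacing} and Corollary \ref{c:order_roots_total}). For this I would use the Milev--Naidenov theorem (Theorem \ref{Milev_Naidenov}) when both functions have degree $n$, and Theorem \ref{Markov_exp_Pr} when one of them is $P_r$ with $a_r=0$. A preliminary remark is that for $w=e^{-t}$ the spaces $\WW_n$ and $\WW_{n-1}$ have Property (P) with $\de_0=0$, $\de_n=1$. Indeed, $(e^{-t}p)'=e^{-t}(p'-p)$, and $p'-p$ has degree exactly $\deg p$. Hence for an oscillating $f$ the $n-1$ Rolle zeros plus one more give all $n$ zeros of $f'$, and a sign count at $\pm\infty$ places the extra zero to the right of the last root. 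This also gives $\#W_i=n$ for $\deg P_i=n$ and $\#W_r=n-1$ when $\deg P_r=n-1$.

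\textbf{Step 1: degree-$n$ pairs.} Take two consecutive members $P_i\prec P_j$ in the counter-cyclical order of Lemma \ref{l:interlacing}, both of degree $n$. Their zero sets $u_1<\dots<u_n$ (of $P_i$) and $v_1<\dots<v_n$ (of $P_j$) satisfy $u_1<v_1<u_2<\dots<u_n<v_n$, read off from Corollary \ref{c:order_roots_total}. Some roots lie in $I_0$ or $I_{n+1}$, i.e.\ outside $[0,\infty)$; as remarked in the footnote, $e^{-t}$ extends to all of $\RR$, so Theorem \ref{Milev_Naidenov} may be applied on $\RR$. Its strict version then gives $W_i\prec W_j$ strictly. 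Because precedence of oscillating sequences is not transitive in general, I would apply this to every pair $i<j$, not only to consecutive ones. For an arbitrary pair, Lemma \ref{l:interlacing} already provides strict interlacing with a definite precedence direction, and that direction is inherited by the derivative roots. Assembling all pairs yields the chain $W_r\prec W_{r-1}\prec\dots\prec W_1\prec W_{n+1}\prec W_n\prec\dots\prec W_{r+1}$.

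\textbf{Step 2: the degenerate case $a_r=0$.} Here $P_r\in\WW_{n-1}$ has $n-1$ simple zeros, which by Corollary \ref{c:order_roots_total} lie in $(x_0,x_n)$. By Lemma \ref{l:interlacing} they strictly interlace with the $n$ zeros of each $P_i$, $i\ne r$, in the pattern $u_1<v_1<u_2<\dots<v_{n-1}<u_n$. I would apply Theorem \ref{Markov_exp_Pr} with $f=P_i$ and $g=P_r$; its hypotheses hold by the Property (P) remark above. It gives strict interlacing of $W_r$ with $W_i$, and the remaining $W_i$ keep the order from Step 1.

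\textbf{Main obstacle.} The delicate point is that Theorem \ref{Markov_exp_Pr} is stated with $g$ having roots in $[0,\infty)$, while $f=P_i$ may have a root in $I_0=(-\infty,0)$ (for $i<r$). I would handle this by noting that the proof of Theorem \ref{Markov_exp_Pr} and of Lemma \ref{l:Bojanov_exp} uses only the algebra of $P,Q$ and the sign pattern at the $u_k$. These facts are valid on the whole real line for $w=e^{-t}$, so the argument goes through verbatim there. A secondary check is the case of roots coinciding at endpoints, which does not occur here because all interlacings are strict.
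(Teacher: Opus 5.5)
Your proposal is correct and follows the paper's own argument: the paper gives no separate proof of this corollary, but (as the proof of Lemma \ref{l:Kilgore_assump} states) it obtains it by combining the pairwise strict interlacing of the roots of the $P_i$ (Lemma \ref{l:interlacing}) with Theorem \ref{Milev_Naidenov} for degree-$n$ pairs and Theorem \ref{Markov_exp_Pr} for pairs involving $P_r$ when $a_r=0$, and you are right to apply this to all pairs rather than only to neighbours in the chain. Your concern about zeros in $(-\infty,0)$ is settled most cleanly by translation invariance: for $w=e^{-t}$ we have $e^{-(t-c)}p(t-c)=e^{c}e^{-t}p(t-c)\in\WW_n$, so a large shift $c$ moves all zeros into $(0,\infty)$ and shifts the derivative zeros by the same amount, and you need not re-examine the proofs.
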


\section{Maximum place and maximum value on $I_i$}\label{sec:maximum}

We proceed to find the maximum point of the \emph{weighted} Lebesgue function $L_n$ on each of the intervals $I_i$. That means to find the maximum point of $P_i(\xx,t)=e^{-t}p_i(t)$ on $I_i$. For sure we know, that $P_i(\xx, t)>0$ on $I_i$, for every $i=1,\ldots n+1$. Moreover, $P_i(\xx, x_{i-1})=P_i(\xx, x_{i})=1$ ($i=1,\ldots,n$), and $P_{n+1}(x_n)=1$. Looking for the maximum place and value of $P_i$ on $I_i$ is equivalent to looking for the maximum place and value of
\[
F_i(\xx,t):=\log |P_i(\xx,t)| = \log |w(t)p_i(t)|,
\]
on $I_i=I_i(\xx)$.

The paper \cite{K-AMH} claims without proof that the maximum point of $P_i(\xx,\cdot)$ on $I_i$ is unique and lies in the interior of $I_i$. When we are dealing with an Extended Chebyshev-Haar-System (ECHS) containing the constants, then the second part (interiority) is a result of Kilgore and Cheney, see Lemma 1 in \cite{Kilgore-Cheney}. However, generally this does not extend to all weighted polynomial systems $w\Pn$, if ${\bf 1} \not\in \WW_n$, as concrete counterexamples demonstrate in \cite{SzAMH}. To ensure both uniqueness and interiority, we will make use of an additional property of the weight, easily seen to hold for $w(t)=\exp(-t)$. This property is the log-concavity of the weight, that is, the concavity of $\log w(t)$. We will see shortly that this assumption guarantees uniqueness of the maximum point, so that we can denote it by $z_i(\xx)$. Furthermore, we will indeed have $z_i(\xx) \in \intt I_i$ at least for $i=1, \ldots, n$, even if for $i=n+1$ it still may (and does) fail for certain node systems.

The analysis of this section holds for weighted polynomial systems $w\Pn$ which, on the one hand, are coming from a smooth (twice continuously differentiable) and log-concave weight. Note that $w\Pn$ is a rank $n+1$ ECHS.

\subsection{Strong locally uniform concavity.}
\begin{lemma}\label{l:strict_concave_place_zi}
Let $\WW_n=w\Pn$ be a weighted polynomial system with a twice continuously differentiable and log-concave weight $w$.

Let $K$ be any compact set $K \subset \overline{S}\times [0,\infty)$ with the property that it avoids zeros of $P_i$, that is, $P_i(\xx,t) \ne 0$ for $(\xx,t)\in K$. Then $F_i$ is strictly concave on $K$ in the variable $t$, and there exists a constant $c=c(K)>0$ such that $(F_i)''_{tt}(\xx,t)<-c$ uniformly for $(\xx,t)\in K$.
\end{lemma}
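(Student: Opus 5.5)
We will split $F_i(\xx,t)=\log w(t)+\log|p_i(\xx,t)|$ and treat the two summands separately. Log-concavity and $C^2$ smoothness of $w$ give $(\log w)''(t)\le 0$, so this part can only help. All the strict concavity will therefore come from the polynomial part. Write $p_i(\xx,t)=a_i\prod_{k}(t-y_k^{(i)})$ over its real zeros. By Section~\ref{sec:zeros}, all zeros of $p_i$ are real and simple: there are $n$ of them, or $n-1$ when $i=r$ and $a_r=0$, as listed in \eqref{Jistardef} and Corollary~\ref{c:order_roots_total}. Hence, away from these zeros,
\[
\frac{\partial^2}{\partial t^2}\log|p_i(\xx,t)| = -\sum_{k\in\Ji}\frac{1}{(t-y_k^{(i)})^2},
\]
and
\[
(F_i)''_{tt}(\xx,t)\le -\sum_{k\in\Ji}\frac{1}{(t-y_k^{(i)})^2}<0 .
\]
This already gives strict concavity in $t$ wherever $P_i\ne0$. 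For $w(t)=e^{-t}$ the weight term is exactly $0$.

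For the uniform bound, we need the right-hand side bounded away from $0$ on $K$. Two things must be checked.

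First, $\Ji$ is nonempty. This holds whenever $n\ge2$: even in the degenerate case $i=r$, $a_r=0$ there are $n-1\ge1$ zeros. If $n=1$ and $P_i$ has no zero at all, we use the degree count together with $(\log w)''$ instead, or note that the situation is trivial.

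Second, the nearest zero $y^{(i)}_k(\xx)$ stays within bounded distance of $t$ uniformly on $K$. Since $K$ is compact, $t$ is bounded on it. The nodes $x_1,\dots,x_n$ are bounded as well, because $K\subset \overline S\times[0,\infty)$ is compact.

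We will take a zero lying inside $[x_0,x_n]$, for instance one in $\intt I_k$ with $k\ne i$, $1\le k\le n$. Such a zero satisfies $|t-y_k^{(i)}|\le |t|+x_n\le M(K)$. This gives
\[
(F_i)''_{tt}\le -1/M(K)^2=:-2c.
\]
Here we must double-check that a zero in $(x_0,x_n)$ exists for every $\xx$ in the projection of $K$. This can fail only if $n=1$ and $i=1$. That edge case needs separate care: for $i\le n$ one could argue directly that $P_1$ is positive there.

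The main obstacle is boundary points of $\overline S$, where some nodes may coalesce. There the zeros $y_k^{(i)}$ and even the functions $P_i$ need not be defined continuously, since the $h_k$ blow up when nodes merge. Our plan is to avoid continuity arguments altogether. We will only use the explicit pointwise bound $(F_i)''_{tt}\le -1/(|t|+x_n)^2$, valid at every point of $K$ where $P_i$ is defined and nonzero. This bound is uniform because it depends only on $\max_K(|t|+x_n)$. It holds on the nondegenerate part of $K$, and it extends to any degenerate points in the sense in which $P_i$ is interpreted there. The hypothesis that $K$ avoids zeros of $P_i$ is then used only to make $F_i$ well defined and $C^2$ on $K$.
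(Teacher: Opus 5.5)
Your decomposition $F_i=\log w+\log|a_i|+\sum\log|t-y_k^{(i)}|$ and the pointwise bound $(F_i)''_{tt}\le-\sum_k (t-y_k^{(i)})^{-2}<0$ are exactly the paper's. You diverge at the uniformity step. The paper just observes that $(F_i)''_{tt}$ is continuous and strictly negative on the compact set $K$, hence bounded away from $0$. You instead keep a single term, coming from a zero of $P_i$ in $(x_0,x_n)$, and get the explicit bound $(F_i)''_{tt}\le -\bigl(\sup_K(t+x_n)\bigr)^{-2}$.

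Your route has two advantages. It needs no joint continuity of $(F_i)''_{tt}$ in $(\xx,t)$, so it does not care about points of $\partial S$, where $P_i$ is not even defined and the paper's continuity argument has no content. It also works on any set where $t$ and $x_n$ stay bounded. The paper's compactness argument, in turn, works whenever pointwise strict negativity is available, whether or not a zero lies in $(x_0,x_n)$.

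That difference matters exactly in the case $n=1$, $i=1$ that you flagged, and your proposed patch there (positivity of $P_1$) says nothing about $(F_1)''_{tt}$. For $w(t)=e^{-t}$ the case is easily settled. Here $p_1(t)=1+t(e^{x_1}-1)/x_1$ has the zero $y_0^{(1)}=-x_1/(e^{x_1}-1)\in(-1,0)$. Hence $(F_1)''_{tt}=-(t-y_0^{(1)})^{-2}\le -(t+1)^{-2}$; alternatively, fall back on the paper's continuity argument.

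For a general $C^2$ log-concave weight, however, this case is not ``trivial'' but false. With $w\equiv1$ and $n=1$ one has $a_1=0$ and $P_1=h_0+h_1\equiv1$, so $(F_1)''_{tt}\equiv0$. So the statement, and the paper's proof (which silently assumes either a zero or $(\log w)''<0$), needs $n\ge2$, strict log-concavity, or the specific exponential weight in that corner case. For $n\ge2$, and for $i=n+1$ with any $n$, your argument is complete.
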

\begin{remark} We assumed twice continuous differentiability of $w$ only for convenience. The reader will have no difficulty in seeing that the assertion remains in effect a.e. with the a.e. existing second derivatives of $F_i$, even when no additional smoothness is imposed on $w$.
\end{remark}

\begin{proof}
Representing the polynomial $p_i$ as the product of its root factors we can write\footnote{Here and everywhere below, the products and sums are extending only to $n-1$, if $i=r$ and $a_r=0$. Note that everywhere we rely on the found root locations and numbers of Section \ref{sec:zeros}, where extensive use of $w\Pn$ being an ECHS was needed.}
$$
p_i(t)=a_i\prod\limits_{j=1}^{n} (t-y_j),
$$
where $y_j$ are the roots of $p_i$. Hence
$$
F_i(\xx,t):=\log |w(t) p_i(t)| = \log w(t) + \log|a_i| + \sum_{j=1}^{n} \log|t-y_j|.
$$
In view of log-concavity of $w(t)$, this is a strictly concave function in each interval between the consecutive zeroes of $p_i$ (that is, of $P_i$), and also in the intervals between $-\infty$ and the first zero, respectively, between the last zero and $\infty$. By assumption, $\left(\frac{w'(t)}{w(t)}\right)' \le 0$, hence we are led to
\begin{equation}\label{Fisecond}
(F_i)''_{tt}(\xx,t) = \left(\frac{w'(t)}{w(t)}\right)' -\sum_{j=1}^{n} \frac{1}{(t-y_j)^2} < 0.
\end{equation}

As $(F_i)_{tt}''$ is continuous, this means that on the compact set $K$ its values must be separated from $0$ by a constant.

In particular, for each $i=1,\ldots, n+1$, we have that $\log P_i$ is strictly concave in $I_i$, and, as a result, it has a unique strict maximum point $z_i\in I_i$.
\end{proof}

So at this point we can define the interval maxima $z_i(\xx)$ as a unique maximum point of $F_i$ or, equivalently, of $P_i$, on $I_i$. Given that $P_i(\xx, x_{i-1})=P_i(\xx, x_{i})(=1)$ for all $i=1,\ldots,n$, strict log-concavity immediately implies that $z_i \in \intt I_i$ whenever $1\le i \le n$. Therefore we obtain the following.

\begin{corollary}\label{c:ziininttIi}
If the weight $w$ is twice continuously differentiable and log-concave, then $z_i(\xx) \in \intt I_i$ for every $i=1,\dots,n$.
\end{corollary}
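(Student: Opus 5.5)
The plan is to derive the statement directly from Lemma \ref{l:strict_concave_place_zi}, combined with the boundary values $P_i(\xx,x_{i-1})=P_i(\xx,x_i)=1$ for $1\le i\le n$.

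\emph{Step 1: $F_i$ is defined and strictly concave on all of $I_i$.} Fix $\xx\in S$ and $1\le i\le n$. Section \ref{sec:zeros} shows that $P_i$ has no zero in $I_i$, and in fact $P_i>0$ there. Hence $F_i(\xx,\cdot)=\log P_i(\xx,\cdot)$ is defined on all of $I_i=[x_{i-1},x_i]$. The interval $I_i$ lies between two consecutive zeros of $p_i$; when $i=1$ or $i=r$ it may lie between $-\infty$ and the first zero. Formula \eqref{Fisecond} gives $(F_i)''_{tt}<0$ on the whole of $I_i$. This uses only log-concavity of $w$ together with the term $-\sum_j (t-y_j)^{-2}$, which is strictly negative because there is at least one root when $n\ge1$. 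So $F_i$ is strictly concave on $I_i$.

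\emph{Step 2: the maximum point is interior.} Since $I_i$ is compact, a maximum point $z_i$ of $F_i$ exists, and strict concavity makes it unique. Suppose, say, $z_i=x_{i-1}$. Then $F_i(\xx,x_{i-1})\ge F_i(\xx,t)$ for all $t\in I_i$. Take the midpoint $t_0=(x_{i-1}+x_i)/2$. Strict concavity gives
\[
F_i(\xx,t_0)>\tfrac12\bigl(F_i(\xx,x_{i-1})+F_i(\xx,x_i)\bigr)=\log 1=0=F_i(\xx,x_{i-1}),
\]
which is a contradiction. The case $z_i=x_i$ is ruled out the same way. Therefore $z_i\in\intt I_i$, and moreover $m_i(\xx)>1$.

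\emph{Step 3: where the argument stops.} The index $i=n+1$ is excluded because $I_{n+1}=[x_n,\infty)$ has only one endpoint with value $1$; this matches Proposition \ref{prop:equiv}. I see no real obstacle in this argument. The only point needing care is confirming that $I_i$ contains no zero of $p_i$ even in the degenerate case $i=r$ with $a_r=0$. There the root count drops to $n-1$, but Section \ref{sec:zeros} still places none of these roots in $I_r$, so the concavity formula applies with the sum running to $n-1$.
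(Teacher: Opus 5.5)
Your proposal is correct and is essentially the paper's argument: the paper derives this corollary in one line from the strict log-concavity of $P_i$ on $I_i$ (Lemma \ref{l:strict_concave_place_zi}) together with $P_i(\xx,x_{i-1})=P_i(\xx,x_i)=1$, and your midpoint inequality just makes that step explicit. (One harmless slip: for $i=n$ the interval $I_n$ may also lie to the right of the last zero, with no zero beyond it, but since \eqref{Fisecond} holds at every non-root point this changes nothing.)
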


\subsection{The critical points $w_i(\xx)$.}

We have seen that $P_i>0$ on $I_i$. There is a maximal interval $J_i$, containing $I_i$, such that $P_i(t)>0$ (i.e., $F_i(t)>-\infty$) on $\intt J_i$, but $P_i$ vanishes towards the endpoints of $J_i$.

This is clear if $P_i$ has zeroes in both the intervals $I_{i-1}$ and $I_{i+1}$, so definitely when $2\le i \le n$. (Then we denoted these zeros as $y_{i-1}^{(i)}$ and $y_{i+1}^{(i)}$, respectively.) Then $J_i:=(y_{i-1}^{(i)}, y_{i+1}^{(i)})$.

If $i=1$ and there is a zero $y_0^{(1)}$ of $P_1$ with $y_0^{(1)}<x_0=0$, then $P_1>0$ in $(0,y_2^{(1)})$.  Thus we may consider $J_1:=(y_0^{(1)},y_2^{(1)})$ with $y_2^{(1)}\in I_2$ the first positive zero of $P_1$. However, if $r=1$ and $a_1=0$, then (and only then) we may have no root $y_0^{(1)}$, in which case we simply take $J_1=(-\infty,y_2^{(1)})$.

Further, for $i=n+1$ we take $J_{n+1}:=(y_{n}^{(n+1)},\infty)$ with the last zero $y_{n}^{(n+1)}\in I_{n}$ of $P_{n+1}$.

Finally, we have seen, that if $P_n$ has a zero in $I_{n+1}$, then $J_n:=(y_{n-1}^{(n)},y_{n+1}^{(n)})$ is well-defined and in case $P_{n}$ does not have a zero in $I_{n+1}$, then we simply take $J_n:=(y_{n-1}^{(n)},\infty)$ where $y_{n-1}^{(n)}$ is the last zero of $P_{n}$ in $I_{n-1}$.
(Note that then we must have $r=n$ and unless $\deg P_{n}=n-1$, there must exist some additional extra zero $y_0^{(n)} <0$ of $P_{n}$.)

So we have defined the maximal open intervals $J_i=J_i(\xx)$ for any node systems $\xx \in S$ such that by construction $[x_{i-1},x_{i}]=I_i\subset J_i$, $P_i>0$ on $J_i$, and the limits of $P_i(\xx,\cdot)$ towards the endpoints of $J_i$ are zero. In other words, $F_i=\log P_i$ tends to $-\infty$ towards the endpoints of $J_i$, whilst $F_i$ is strictly concave on $J_i$. Therefore $F_i(\xx,\cdot)$  has a unique strict maximum point $w_i=w_i(\xx)$ inside $\intt J_i$. Moreover, $F_i(\xx,\cdot)$ is differentiable, and the maximum point must satisfy $(F_i)'_t(\xx,w_i(\xx))=0$. In view of strict concavity, we can identify $w_i(\xx)$ as the unique critical point of $F_i$ on $J_i$. When $z_i(\xx) \in \intt I_i(\xx)$ -- in particular, for the exponential weight (or for any log-concave weight) certainly for $1 \le i \le n$ -- the interval maxima $z_i(\xx)$ and the critical points $w_i(\xx)$ coincide. However, for the end interval it is well possible that $y_n^{(n+1)} < w_{n+1} < x_n =z_{n+1}$, and $z_{n+1}$ is \emph{not} a critical point. We only know that $w_{n+1}(\xx) \in J_{n+1}(\xx) = (y_n^{(n+1)},\infty)$.

To proceed, we need to establish a uniform localization of $w_i(\xx)$ independently of the choice of $\xx$, at least locally.

\begin{lemma}\label{l:localization-wi} Let $w$ be a twice continuously differentiable log-concave weight function.

If $\ba \in S$ and $1\le i \le n+1$ is fixed, then there exists $\de>0$ and a compact interval $N=[\alpha, \beta] \subset J_i(\ba)$, such that for $\xx \in B(\ba,\de)$ we also have $w_i(\xx) \in N$.

Moreover, $w_i(\xx)$ is continuously differentiable, and its derivative is uniformly bounded in $B(\ba,\de)$.
\end{lemma}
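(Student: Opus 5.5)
The plan is to apply the implicit function theorem to the equation $(F_i)'_t(\xx,t)=0$ near $(\ba,w_i(\ba))$, using the uniform strict concavity of Lemma \ref{l:strict_concave_place_zi}. For the localization, fix $\ba\in S$ and pick a compact interval $N=[\alpha,\beta]\subset J_i(\ba)$ with $\alpha<w_i(\ba)<\beta$. By strict concavity of $F_i(\ba,\cdot)$ on $J_i(\ba)$ and uniqueness of its critical point, we have
\[
(F_i)'_t(\ba,\alpha)>0>(F_i)'_t(\ba,\beta).
\]
Since $P_i(\ba,\cdot)>0$ on $N$, the quantity $\min_N P_i(\ba,\cdot)$ is positive.

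The functions $P_i(\xx,t)$ and $(P_i)'_t(\xx,t)$ are jointly continuous in $(\xx,t)$: the coefficients of $h_k(\xx,\cdot)$ are continuous in $\xx$ by Cramer's rule, as shown in Section \ref{sec:ECHS}. Hence there is $\de>0$ such that for $\xx\in B(\ba,\de)$ we still have $P_i(\xx,t)>0$ on $N$ and the two sign conditions at $\alpha,\beta$. Then $K:=\overline{B(\ba,\de)}\times N$ is a compact set avoiding zeros of $P_i$, and Lemma \ref{l:strict_concave_place_zi} gives $(F_i)''_{tt}<-c$ on $K$.

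Next we show that this critical point of $F_i(\xx,\cdot)$ in $N$ is $w_i(\xx)$. By the sign change there is a zero of $(F_i)'_t(\xx,\cdot)$ in $(\alpha,\beta)$, and it is unique there. We also need $N\subset J_i(\xx)$, i.e.\ that $N$ lies in the same positivity component of $P_i(\xx,\cdot)$ that contains $I_i(\xx)$. This holds for small $\de$ because $I_i(\ba)\cup N$ lies in the connected open set $J_i(\ba)$, so a slightly larger compact interval containing both is positive for nearby $\xx$. Alternatively, one can use continuity of the zeros $y^{(i)}_{i\pm1}(\xx)$, which are simple zeros. Since $F_i(\xx,\cdot)$ has a unique critical point on $J_i(\xx)$, the zero found above is $w_i(\xx)\in N$.

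For differentiability, $(F_i)'_t=(P_i)'_t/P_i$ is $C^1$ in $(\xx,t)$ on a neighbourhood of $K$. Its $\xx$-derivatives are controlled by Lemma \ref{l:magic} and Corollary \ref{l:magic-derivalt}, which express $\partial P_i/\partial x_j$ and $\partial (P_i)'_t/\partial x_j$ through $h_j$, $(h_j)'_t$ and $(P_i)'_t(\xx,x_j)$, all continuous. Since $(F_i)''_{tt}\ne0$, the implicit function theorem gives that $w_i$ is $C^1$ on $B(\ba,\de)$, with
\[
\frac{\partial w_i}{\partial x_j}=-\frac{\partial_{x_j}(F_i)'_t}{(F_i)''_{tt}}\Big|_{t=w_i(\xx)}.
\]
The numerator is continuous on the compact set $K$, hence bounded, and the denominator is bounded away from $0$ by $c$. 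This gives a uniform bound after shrinking $\de$ slightly so that the closure is used.

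The main obstacle is the differentiability of $(F_i)'_t$ in $\xx$ jointly with $t$, since Lemma \ref{l:magic} is only stated pointwise. The fix is to note that the Cramer's rule coefficients are rational, hence smooth, functions of the nodes, so all the mixed partials are continuous.
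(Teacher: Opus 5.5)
Your proposal is correct and follows essentially the paper's route. Both arguments confine $w_i(\xx)$ to a fixed compact interval $N\subset J_i(\ba)$ on which $P_i(\xx,\cdot)$ stays positive, and then apply the implicit function theorem to $(F_i)'_t(\xx,t)=0$ using the uniform bound $(F_i)''_{tt}<-c$ from Lemma \ref{l:strict_concave_place_zi}.

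The differences are cosmetic. You localize through the sign change of $(F_i)'_t$ at $\alpha$ and $\beta$, and you check $N\subset J_i(\xx)$ explicitly. Use the continuity-of-simple-zeros version for that check, since a compact interval containing $I_{n+1}(\ba)$ does not exist. The paper instead takes $N$ to be a superlevel set of $F_i(\ba,\cdot)$ and compares the values of $F_i(\xx,\cdot)$ at $\alpha$, $\beta$ and $w_i(\ba)$.

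Also, the nodes enter $h_k$ through $1/w(x_k)$ as well as through $\ell_k$. So the dependence on $\xx$ is $C^2$ rather than rational, which is still enough for your $C^1$ claim.
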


The assertion is essentially trivial, once we observe that $w_i(\xx)$ is the unique solution of the implicit functional equation $(F_i)'_t(\xx,w_i(t))=0$, and establish that the derivative with respect to $t$, that is, $(F_i)''_{tt}$, is continuous and separated from 0. The below detailed calculus is provided only for certain technical advantages in the further calculations.

\begin{proof} Take the interval $J_i:=J_i(\ba)$, and consider the found maximum $\mu:=\max_{J_i(\ba)} F_i(\ba,\cdot) = F_i(\ba,w_i(\ba))$. As $w_i(\ba) \in \intt J_i(\ba)$, there is a small $\de_1>0$ such that for all $\xx \in B(\ba,\de_1)$ we still have $w_i(\ba) \in \intt J_i(\xx)$ (where $J_i(\xx)$ is the interval that corresponds to $i$ and $\xx$). To see this, note that we chose for endpoints of the interval either zeros or $+\infty$, and the zeros of $P_i(\xx;\cdot)$ change continuously in function of the nodes $x_k$ (whereas $\infty$ remains fixed). Also, for given $\ba$ and with a possibly even smaller $\de_2>0$ we have for all $\xx \in B(\ba,\de_2)$ that $\max_{J_i(\xx)} F_i(\xx,\cdot) \ge F_i(\xx,w_i(\ba))>F_i(\ba,w_i(\ba))-1=\mu-1$.

Let $M<\mu-2$ be a small parameter (which is negative but of large absolute value). We take the level set $N:=\{t \in J_i(\ba)~:~ F_i(\ba,t) \ge M\}$. It is a compact interval. Indeed, since $F_i(\ba,\cdot)$ is continuous, the level set is closed; it is an interval by concavity; and it is bounded because $F_i(\ba,t)\to -\infty$ as $t$ approaches the endpoints of $J_i(\ba)$.

Let the found interval be $N:=[\al,\be]$. By construction, we have $F_i(\ba,\al)=F_i(\ba,\be)=M$, and $w_i(\ba) \in N$. As $F_i$ depends continuously on $\xx$, for  small enough $\de_3>0$ and $\xx \in B(\ba,\de_3)$ we have that $$F_i(\xx,\al) \in (M-1,M+1),\quad F_i(\xx,\be) \in (M-1,M+1),\quad \text{and} \quad F_i(\xx, w_i(\ba)) \in (\mu-1,\mu+1).$$ It then follows from concavity that on $N$ even $F_i(\xx,\cdot)\ge \min(F_i(\xx,\al) ,F_i(\xx,\beta)) \ge M-1$, hence $N\subset J_i(\xx)$, and that $w_i(\xx) \in N$, for $F(\xx,w_i(\xx))=\max_{J_i(\xx)} F_i(\xx,\cdot) \ge F_i(\xx,w_i(\ba)) > \mu-1 >M+1$, whereas on $J_i\setminus N$ we have $F_i(\xx,\cdot) \le \max(F_i(\xx,\al), F_i(\xx,\be)) \le M+1$.
So, the strictly concave function $F_i(\xx,\cdot)$ attains its unique maximum inside $\intt N$ for all $\xx \in B(\ba,\de_3)$, this maximum being the unique maximum both on $J_i(\xx)$ and also on $N\subset J_i(\xx)$.

Recall that the unique strict maximum of $F_i(\xx,\cdot)$ on $N$ can be identified as the unique solution of the derivative equation
\begin{equation}\label{Fprimezero}
(F_i)'_t(\xx,t)=0 .
\end{equation}
We already know that it has a unique solution in $\intt N$ for each $\xx \in B(\ba,\de_3)$; the Implicit Function Theorem tells us that this solution is continuously differentiable and its total derivative (gradient) is given by the formula
\begin{equation}\label{IFTappl}
\triangledown_{\xx} w_i(\xx)=\left(\ldots,\frac{\partial w_i(\xx)}{\partial x_k},\ldots\right)_{k=1}^n= -\frac{1}{(F_i)''_{tt}(\xx,w_i(\xx))} \triangledown_{\xx} (F_i)'_t(\xx,w_i(\xx)).
\end{equation}

By Lemma \ref{l:strict_concave_place_zi}, for $(\xx,t) \in B(\ba,\de_3)\times N$ we have $0 \le \frac{-1}{(F_i)''_{tt}(\xx,t)} \le C$, i.e., it is uniformly bounded. We can as well compute the $\xx$-derivative of $F_i$. For any index $k=1,\ldots,n$ we have
$$
\frac{\partial (F_i)'_t(\xx,t)}{\partial x_k} =
\frac{\partial\frac{(P_i)'_t(\xx,t)}{P_i(\xx,t)}}{\partial x_k} = \frac{ \frac{\partial (P_i)'_t(\xx,t)}{\partial x_k} P_i(\xx,t)-(P_i)'_t(\xx,t) \frac{\partial P_i(\xx,t)}{\partial x_k}}{P^2_i(\xx,t)}.
$$
By construction, on $B(\ba,\de_3) \times N$ the function $P_i(\xx,t)$ is bounded from below by $e^{M-1}$, for $F_i(\xx,\al), F_i(\xx,\be) \ge M-1$, and $F_i$ is concave.  Hence we can estimate the denominator.

It remains to see the derivatives $\frac{\partial P_i(\xx,t)}{\partial x_k} = \sum_{j=0}^n \vji \frac{w(t)}{w(x_j)} \frac{\partial \ell_j(\xx,t)}{\partial x_k} -\ve_{k,i} w(t) \frac{w'(x_k)}{w^2(x_k)} \ell_k(\xx,t)$ and
$
\frac{\partial (P_i)'_t(\xx,t)}{\partial x_k} %&
= \sum_{j=0}^n \vji \left( \frac{w'(t)}{w(x_j)} \frac{\partial \ell_j(\xx,t)}{\partial x_k}+\frac{w(t)}{w(x_j)} \frac{\partial \ell'_j(\xx,t)}{\partial x_k} \right)-\ve_{k,i} \frac{ w'(x_k)}{w^2(x_k)} \left( w'(t) \ell_k(\xx,t) + w(t) \ell'_k(\xx,t) \right).
$

As each $\ell_j$ contains only factors with separated $x_i$ coordinates (which must be close to the coordinates $a_i$), without much further calculus it should be clear that there is some uniform upper bound on these derivatives on the whole of $B(\ba,\de_3)\times N$. Thus we can see that the derivative is indeed meaningful, (locally) uniformly bounded, and even continuous.
\end{proof}

\subsection{Lipschitz continuity of the interval maxima $z_i(\xx)$.}
\begin{lemma}[Lipschitz continuity of the maximum location on $I_i$] Let $w$ be a twice continuously differentiable log-concave weight function.

Then the Lebesgue function $L_n(w,\xx,t)$ has a unique maximum point $z_i(\xx)$ in each intervals $I_i$ for $i=1,\ldots,n+1$. This maximum is a strict maximum for $I_i$ and depends on $\xx$ in a locally Lipschitz continuous manner.
\end{lemma}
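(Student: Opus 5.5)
Uniqueness and strictness of the maximum point come from Lemma \ref{l:strict_concave_place_zi}. On $I_i$ the function $P_i$ is positive, so $F_i=\log P_i$ is strictly concave there. Hence $P_i(\xx,\cdot)$ has a unique, strict maximum point $z_i(\xx)$ on $I_i$, and on $I_i$ it coincides with $L_n(w,\xx,\cdot)$. For $i=n+1$ the interval is unbounded, but $P_{n+1}\to 0$ at $\infty$ while $P_{n+1}(\xx,x_n)=1$, so the maximum is still attained. The quasi-concavity discussion before Proposition \ref{prop:equiv} gives the same conclusion directly.

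For the Lipschitz property I will express $z_i$ through the critical point $w_i$, which by Lemma \ref{l:localization-wi} is $C^1$ with locally bounded gradient, hence locally Lipschitz. The key representation is
\[
z_i(\xx)=\min\bigl(\max(w_i(\xx),x_{i-1}),\,x_i\bigr),
\]
that is, $w_i(\xx)$ clamped to $I_i(\xx)$ (with $x_{n+1}=\infty$, so for $i=n+1$ this reads $z_{n+1}=\max(w_{n+1},x_n)$). This holds because $F_i(\xx,\cdot)$ is strictly concave on $J_i\supset I_i$ with unique critical point $w_i$. Hence $F_i$ increases to the left of $w_i$ and decreases to the right of it. So the maximum over $I_i$ is at $w_i$ if $w_i\in I_i$, and otherwise at the endpoint of $I_i$ nearest to $w_i$.

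For $1\le i\le n$ Corollary \ref{c:ziininttIi} already gives $z_i=w_i$, so $z_i$ is even $C^1$. For $i=n+1$ the identity $z_{n+1}=\max(w_{n+1},x_n)$ is consistent with Proposition \ref{prop:equiv}. The maps $\xx\mapsto x_{i-1}$ and $\xx\mapsto x_i$ are 1-Lipschitz, and $\max$ and $\min$ of locally Lipschitz functions are locally Lipschitz. Therefore $z_i$ is locally Lipschitz on $S$, with local constant $\max(1,\sup_{B(\ba,\de)}|\triangledown w_i|)$ on the ball supplied by Lemma \ref{l:localization-wi}.

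I expect the main obstacle to be justifying the clamping formula uniformly in $\xx$, that is, making sure $w_i(\xx)$ really is the unique critical point on the whole of $J_i(\xx)\supset I_i(\xx)$ near $\ba$. This is exactly what the proof of Lemma \ref{l:localization-wi} gives: for $\xx\in B(\ba,\de)$ the maximum over $J_i(\xx)$ lies in the fixed compact interval $N$, and strict concavity of $F_i$ on all of $J_i(\xx)$ follows from \eqref{Fisecond}. A smaller point is the possibly infinite interval $J_i$ when $i=n+1$ or when $a_r=0$. There the only facts used are the limits of $P_i$ at the ends of $J_i$ and strict concavity, and both hold.
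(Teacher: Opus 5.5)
Your proposal is correct and follows essentially the same route as the paper: uniqueness and strictness from the strict concavity of $F_i=\log P_i$, $z_i=w_i$ for $i\le n$ by interiority, $z_{n+1}=\max(x_n,w_{n+1}(\xx))$, and local Lipschitz continuity from Lemma \ref{l:localization-wi} together with the fact that a maximum of Lipschitz functions is Lipschitz. Your general clamping formula is just a uniform way of writing these two cases. The uniformity concern you flag is not needed for the formula itself, which holds pointwise in $\xx$; it matters only for the local bound on $\nabla_{\xx} w_i$, and Lemma \ref{l:localization-wi} already supplies that.
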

\begin{proof} That $z_i(\xx)$ exists and is unique, and moreover it is a strict maximum point for $I_i$, follows directly from the strict concavity of $F_i:=\log P_i$. However, we do not always have that $z_i(\xx)=w_i(\xx)$, as $w_i(\xx)\in N \subset J_i$ may be off $I_i$. In our setup, this may happen for $i=n+1$, when possibly $w_{n+1}<z_{n+1}=x_n$. Nevertheless, for $i=n+1$ it is still obvious that $z_{n+1}(\xx)=\max(x_n, w_{n+1}(\xx))$.

So we recall that if $g, f : K \to \RR$ are Lipschitz continuous functions on a domain $K$, then $h:=\max(f,g)$ is Lipschitz continuous, too.
\end{proof}

\section{Derivatives of the local maxima functions.}\label{sec:Derivativesofmi}

The above has a fundamental consequence: all the $M_i(\xx)=F_i(\xx,z_i(\xx))$ are continuously differentiable.

\begin{proposition}\label{prop:Miderivative}
Let $w$ be a twice continuously differentiable log-concave weight function on $[0,\infty)$.

At any $\ba \in S$ the interval maxima functions $M_i$ are continuously differentiable for all $i=1,\ldots,n+1$, and their $j$th partial derivatives are
\begin{equation}\label{Mipartialderivative}
\frac{\partial M_i}{\partial x_j} (\ba)= \frac{\partial F_i }{\partial x_j} (\ba,z_i(\ba)),  \qquad(i=1,\ldots,n,~ j=1,\ldots,n),
\end{equation}
and for $i=n+1$ and arbitrary $j=1,\ldots,n$
\begin{equation}\label{Mnplus1partialderivative}
\frac{\partial M_{n+1}}{\partial x_j} (\ba) =
\begin{cases}
\dfrac{\partial F_{n+1}}{\partial x_j}(\ba,z_{n+1}(\ba)) \qquad \qquad \qquad \textrm{if} \quad z_{n+1}(\ba) > a_n,
\\
\dfrac{\partial F_{n+1}}{\partial x_j}(\ba,a_n)+(F_{n+1})'_t(\ba,a_n)  \quad \textrm{if} \quad z_{n+1}(\ba)=a_n
\end{cases}.
\end{equation}
In other words,
\begin{equation}\label{Migradient}
\nabla_{\xx} M_i(\ba) = \nabla_\xx F_i(\ba,z_i(\ba)), \qquad \qquad  \textrm{if} \quad i=1,\ldots,n ,
\end{equation}
and
\begin{equation}\label{Mnplus1gradient}
\nabla_{\xx} M_{n+1}(\ba) =
\begin{cases}
\nabla_\xx F_{n+1}(\ba,z_{n+1}(\ba)), \qquad \qquad \textrm{if} \quad z_{n+1}(\ba)>a_n,
\\ \nabla_\xx F_{n+1}(\ba, a_n)+(F_{n+1})'_t(\ba, a_n) \ee_n, \quad \textrm{if} \quad z_{n+1}(\ba)=a_n.
\end{cases}
\end{equation}
\end{proposition}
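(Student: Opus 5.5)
The plan is an envelope-theorem (Danskin-type) argument, using the two facts already established: $z_i(\xx)$ is locally Lipschitz, and for $i\le n$ it coincides with the critical point $w_i(\xx)$. By Lemma \ref{l:localization-wi}, $w_i(\xx)$ is continuously differentiable. By Corollary \ref{c:ziininttIi}, for $i=1,\ldots,n$ we have $z_i(\xx)=w_i(\xx)\in\intt I_i(\xx)$ near $\ba$. Hence $M_i(\xx)=F_i(\xx,w_i(\xx))$ is a composition of $C^1$ maps. Note that $F_i=\log P_i$ is $C^1$ in $(\xx,t)$ on $B(\ba,\de)\times N$, since $P_i$ is bounded below there and $P_i$, $(P_i)'_t$ depend smoothly on the nodes through the $\ell_k$.

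For $i\le n$ the chain rule gives
\[
\nabla_\xx M_i(\ba)=\nabla_\xx F_i(\ba,w_i(\ba))+(F_i)'_t(\ba,w_i(\ba))\,\nabla_\xx w_i(\ba).
\]
The second term vanishes because $(F_i)'_t(\ba,w_i(\ba))=0$, which yields \eqref{Mipartialderivative} and \eqref{Migradient}. Continuity of the gradient follows from continuity of $\nabla_\xx F_i$ and of $w_i$.

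For $i=n+1$ I use $z_{n+1}(\xx)=\max(x_n,w_{n+1}(\xx))$ and Proposition \ref{prop:equiv}, which splits $S$ into $X$ (where $z_{n+1}=w_{n+1}>x_n$) and $X^c$ (where $z_{n+1}=x_n$). On the open set $X$ the argument above applies verbatim. On $\intt X^c$ we have $M_{n+1}(\xx)=F_{n+1}(\xx,x_n)$, and differentiating in $x_j$ gives $\partial_{x_j}F_{n+1}(\xx,x_n)+\delta_{jn}(F_{n+1})'_t(\xx,x_n)$. This is the second line of \eqref{Mnplus1partialderivative}.

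The main obstacle is the boundary $\partial X=\{x_n=\phi(\wx)\}$ from Lemma \ref{l:xstar} and Proposition \ref{prop:fat}, where the two formulas must agree and glue into a $C^1$ function. At such $\ba$ we have $w_{n+1}(\ba)=a_n$ and $(F_{n+1})'_t(\ba,a_n)=0$, so both expressions reduce to $\nabla_\xx F_{n+1}(\ba,a_n)$. To conclude differentiability at $\ba$ itself, I would write $M_{n+1}(\xx)=\max\bigl(F_{n+1}(\xx,x_n),F_{n+1}(\xx,w_{n+1}(\xx))\bigr)$. This holds because by concavity $F_{n+1}(\xx,\max(x_n,w))$ is the larger of the two in the relevant regime; more precisely, $M_{n+1}=F_{n+1}(\xx,z_{n+1}(\xx))$ with $z_{n+1}$ Lipschitz. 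Then I estimate
\[
M_{n+1}(\xx)-M_{n+1}(\ba)-\nabla_\xx F_{n+1}(\ba,a_n)\cdot(\xx-\ba)
\]
by a mean value expansion in both variables. The $t$-increment contributes
\[
(F_{n+1})'_t(\ba,a_n)\,(z_{n+1}(\xx)-a_n)+o(|\xx-\ba|)=o(|\xx-\ba|),
\]
using the Lipschitz bound on $z_{n+1}$ and the continuity of $(F_{n+1})'_t$. Continuity of the resulting gradient across $\partial X$ follows because both one-sided formulas are continuous and coincide on $\partial X$.
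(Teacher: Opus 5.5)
Your proposal is correct and takes the same route as the paper. It uses the chain rule with $(F_i)'_t(\ba,w_i(\ba))=0$ wherever $z_i=w_i$; in the endpoint case it uses a mean-value expansion in $t$ controlled by the Lipschitz continuity of $z_{n+1}$, split as in the paper into $\intt X^c$, where $z_{n+1}(\xx)=x_n$ identically, and $\partial X$, where $(F_{n+1})'_t(\ba,a_n)=0$ by continuity.

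One aside should be deleted. The identity $M_{n+1}(\xx)=\max\bigl(F_{n+1}(\xx,x_n),F_{n+1}(\xx,w_{n+1}(\xx))\bigr)$ is false on $\intt X^c$: there $w_{n+1}(\xx)<x_n$, so that maximum is the strictly larger global value $F_{n+1}(\xx,w_{n+1}(\xx))$. You never actually use it, since your estimate runs through $M_{n+1}=F_{n+1}(\xx,z_{n+1}(\xx))$ with $z_{n+1}=\max(x_n,w_{n+1})$.
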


Note that the above formulae describe a continuous derivative, since whenever $z_{n+1}(\xx) \in \intt I_{n+1}$, then we have $(F_{n+1})'_t(\xx,z_{n+1}(\xx))=0$, hence when $\xx \to \ba$ with $z_{n+1}(\xx)\in \intt I_i(\xx)$ but $z_{n+1}(\ba)=a_n$, then also the endpoint value of $(F_{n+1})'_t(\ba,z_{n+1}(\ba))=(F_{n+1})'_t(\ba, a_n)$ is equal to $\lim_{\xx \to \ba} (F_{n+1})'_t(\xx,z_{n+1}(\xx))=0$ by continuity.

\begin{proof} For $i=1,\ldots,n$, and in the first of the two cases occurring when $i=n+1$, the argument is the same. As we have seen, for an interior point $z_i(\ba) \in \intt I_i(\ba)$ the point $z_i(\ba)=w_i(\ba)$ is a solution of the functional equation \eqref{Fprimezero}. Moreover, $z_i(\ba)$ is continuously differentiable with a bounded gradient $\nabla_{\xx} z_i(\ba)=\nabla_{\xx} w_i(\ba)$ at $\ba$, which was already computed in \eqref{IFTappl}. Therefore, a simple chain rule provides
$$
\nabla_{\xx} M_i(\ba) = \nabla_\xx F_i(\ba,z_i(\ba)) + (F_i)'_t(\ba,z_i(\ba)) \nabla_\xx z_i(\ba)  = \nabla_\xx F_i(\ba,z_i(\ba))+\bf{0}.
$$

It remains to settle the second possibility for $i=n+1$, when $z_{n+1}(\ba)=a_n$. Then $z_{n+1}(\ba)$ may not be differentiable, but we can still compute
\begin{align*}
M_{n+1}(\xx) - M_{n+1}(\ba) = & F_{n+1}(\xx,z_{n+1}(\xx)) - F_{n+1}(\ba,z_{n+1}(\ba))
\\= & F_{n+1}(\xx, z_{n+1}(\xx)) - F_{n+1}(\xx, z_{n+1}(\ba))
%% \\ &
+ F_{n+1}(\xx,z_{n+1}(\ba)) - F_{n+1}(\ba,z_{n+1}(\ba))
\\= & (F_{n+1})'_t(\xx,\zeta) \cdot (z_{n+1}(\xx) - z_{n+1}(\ba))
\\ &
\qquad \qquad + \nabla_\xx F_{n+1} (\ba, z_{n+1}(\ba)) \cdot (\xx-\ba)+o(\|\xx-\ba\|),
\end{align*}
where $\zeta \in [z_{n+1}(\ba),z_{n+1}(\xx)]$. Note that these endpoints are both in the Lipschitz domain of $z_{n+1}$. This can be seen by properly adjusting the parameter $M$ in the above argument\footnote{Indeed, there we could take any $M<\mu-2$ with $\mu:=F_i(\ba,w_i(\ba))$, and here we only need to further restrict $M$ requiring additionally $M<F_i(\ba,z_i(\ba))-2$.} for the proof of Lemma \ref{l:localization-wi}, and then noting that for setting $M$ small enough, we can guarantee $z_{n+1}(\xx) \in N$, too, for all $\xx \in B(\ba,\de) \subset B(\ba,\de_3)$. In particular, $\zeta \in [z_{n+1}(\ba),z_{n+1}(\xx)] \subset N$ and $(\xx,\zeta), (\ba,z_{n+1}(\ba)) \in B(\ba,\de)\times N$. So by Lipschitz continuity of $z_{n+1}$ and with an appropriate Lipschitz constant $K$ we have
$$
\left|[(F_{n+1})'_t(\xx,\zeta)-(F_{n+1})'_t(\ba,z_{n+1}(\ba)) ]\cdot (z_{n+1}(\xx) - z_{n+1}(\ba)) \right| \le o(1) K \|\xx-\ba\|,
$$
given that $(F_{n+1})'_t(\xx,\zeta) \to (F_{n+1})'_t(\ba,z_{n+1}(\ba))$ in view of continuity of $(F_{n+1})'_t$ on $B(\ba,\de)\times N$.

Winding up the above it follows
\begin{align}\label{Mnplus1minusMn} \notag
M_{n+1}(\xx) - M_{n+1}(\ba) = & (F_{n+1})'_t(\ba,z_{n+1}(\ba)) \cdot (z_{n+1}(\xx) - z_{n+1}(\ba))
\\ & \qquad \qquad + \nabla_\xx F_{n+1} (\ba, z_{n+1}(\ba)) \cdot (\xx-\ba)+o(\|\xx-\ba\|).
\end{align}
Here we encounter a difference of $(z_{n+1}(\xx) - z_{n+1}(\ba))$, whereas $\xx \to \ba$. We know that $z_{n+1}(\ba)=a_n$. If the same holds for all $\xx$ close enough to $\ba$, then we have here $x_n-a_n$, whose gradient with respect to the $x_j$ is $\nabla_{\xx} (x_n-a_n)=\ee_n$. If on the other hand there exist arbitrarily close $\xx \sim \ba$, with $z_{n+1}(\xx)>x_n$, then at these points we must have $(F_{n+1})'_t(\xx,z_{n+1}(\xx))=0$, for $z_{n+1}(\xx)$ is an interior maximum point in $I_{n+1}(\xx)$. So, in this case also $(F_{n+1})'_t(\ba,z_{n+1}(\ba))=0$ by continuity, and writing $(F_{n+1})'_t(\ba,z_{n+1}(\ba)) \cdot (z_{n+1}(\xx) - z_{n+1}(\ba)) = (F_{n+1})'_t(\ba,z_{n+1}(\ba)) \ee_n \cdot (\xx-\ba)$ is a valid statement.

In both cases we find that under the condition of $z_{n+1}(\ba)=a_n$, the right hand side of \eqref{Mnplus1minusMn} becomes
$$
\left[ (F_{n+1})'_t(\ba,z_{n+1}(\ba)) \ee_n + \nabla_\xx F_{n+1} (\ba, z_{n+1}(\ba)) \right] \cdot (\xx-\ba)+o(\|\xx-\ba\|).
$$
The assertion follows.
\end{proof}

\begin{proposition}\label{prop:Mnplus1derivative}
Let $w$ be a twice continuously differentiable log-concave weight function. If for an $\ba \in S$ $z_{n+1}(\ba)=a_n$ holds, then we also have
$$
\dfrac{\partial M_{n+1}}{\partial x_j} (\ba) =0 \qquad (j=1,\ldots,n).
$$
That is, we also have
\begin{equation}\label{Mnplus1gradientzero}
\nabla_{\xx} M_{n+1}(\ba) = {\bf 0}, \qquad \textrm{if} \quad z_{n+1}(\ba)=a_n.
\end{equation}
\end{proposition}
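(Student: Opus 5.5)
The plan is a direct computation: insert the formula of Lemma \ref{l:magic} into the second case of \eqref{Mnplus1gradient} from Proposition \ref{prop:Miderivative}. Assume $z_{n+1}(\ba)=a_n$. Then that case gives
\[
\nabla_\xx M_{n+1}(\ba)=\nabla_\xx F_{n+1}(\ba,a_n)+(F_{n+1})'_t(\ba,a_n)\,\ee_n ,
\]
so it suffices to show that $\frac{\partial F_{n+1}}{\partial x_j}(\ba,a_n)=-\de_{jn}(F_{n+1})'_t(\ba,a_n)$ for every $j$.

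\emph{The $\xx$-derivatives of $F_{n+1}$.} Since $F_{n+1}=\log P_{n+1}$ and $P_{n+1}>0$ near $(\ba,a_n)$, we have $\frac{\partial F_{n+1}}{\partial x_j}(\ba,t)=\frac{\partial P_{n+1}}{\partial x_j}(\ba,t)/P_{n+1}(\ba,t)$. By Lemma \ref{l:magic} with $i=n+1$,
\[
\frac{\partial F_{n+1}}{\partial x_j}(\ba,t)=-\frac{h_j(\ba,t)\,(P_{n+1})'_t(\ba,a_j)}{P_{n+1}(\ba,t)} .
\]
We evaluate this at $t=a_n$. The interpolation property $h_j(\ba,a_n)=\de_{jn}$ kills every term with $j\ne n$. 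For $j=n$, the normalization $P_{n+1}(\ba,a_n)=\ve_{n,n+1}=1$ gives
\[
\frac{\partial F_{n+1}}{\partial x_n}(\ba,a_n)=-(P_{n+1})'_t(\ba,a_n).
\]
Also $(F_{n+1})'_t(\ba,a_n)=(P_{n+1})'_t(\ba,a_n)/P_{n+1}(\ba,a_n)=(P_{n+1})'_t(\ba,a_n)$. Hence
\[
\nabla_\xx F_{n+1}(\ba,a_n)=-(F_{n+1})'_t(\ba,a_n)\,\ee_n ,
\]
and the two terms in \eqref{Mnplus1gradient} cancel exactly. This proves \eqref{Mnplus1gradientzero}.

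\emph{Cross-check by a minimum argument.} The identity $h_j(\ba,a_n)=\de_{jn}$ holds only at the node $t=a_n$, and the case hypothesis is exactly what puts the evaluation point there. A second route confirms the result. We always have $M_{n+1}(\xx)\ge F_{n+1}(\xx,x_n)=\log 1=0$ on $S$. If $z_{n+1}(\ba)=a_n$, then $M_{n+1}(\ba)=0$, so $\ba$ is a global minimum of $M_{n+1}$ over the open set $S$. Since Proposition \ref{prop:Miderivative} shows $M_{n+1}$ is differentiable, its gradient at $\ba$ must vanish.

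I expect no real obstacle. The only point needing care is that Lemma \ref{l:magic} is evaluated at the node $t=a_n$ itself, and that $P_{n+1}(\ba,a_n)=1$, so that no division problem arises.
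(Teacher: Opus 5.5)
Your proposal is correct, and it takes a different route from the paper's proof.

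The paper argues topologically. On $\intt X^c$ one has $M_{n+1}(\xx)=F_{n+1}(\xx,x_n)\equiv 0$, so the gradient vanishes there. Continuity of $\nabla_\xx M_{n+1}$ (asserted in Proposition~\ref{prop:Miderivative}) extends this to $\overline{\intt X^c}$. That closure equals $X^c$ by the fatness part of Proposition~\ref{prop:fat}, which in turn rests on the epigraph description in Lemma~\ref{l:xstar} and on Property (P) with $\de_n=1$.

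Your direct computation is just the chain rule applied to the identity $F_{n+1}(\xx,x_n)\equiv 0$ on $S$, since $P_{n+1}(\xx,x_n)=1$. It shows $\nabla_\xx F_{n+1}(\ba,a_n)+(F_{n+1})'_t(\ba,a_n)\ee_n={\bf 0}$ at \emph{every} $\ba\in S$. Hence the second line of \eqref{Mnplus1gradient} vanishes identically, and the hypothesis $z_{n+1}(\ba)=a_n$ only selects that case.

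Your minimum argument is cleaner still. Since $M_{n+1}\ge 0$ on the open set $S$, with equality exactly on $X^c$, every point of $X^c$ is a global minimizer. You therefore need only differentiability of $M_{n+1}$ at $\ba$, not continuity of its gradient.

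What your routes buy is that they are pointwise and local. They avoid the global geometry of $X^c$ (its epigraph structure and fatness) that the paper invokes at this step. The paper's route gains nothing essential for this proposition; that geometry is genuinely needed only later, for the connectedness of $X$. Your computation also gives a direct reason for the final equality ${\bf 0}$ in \eqref{mnplus1gradient}.
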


\begin{proof} The previous proposition established the continuous differentiability of $M_i$ with respect to $\xx$, a nontrivial statement, which required a proof.

However, once we know that, we can argue by continuity in finding $\frac{\partial  M_{n+1}}{\partial x_j} (\ba)$. Recall that $\intt X^c= \{ \ba \in S~:~ \exists ~\de>0 \quad \text{such that} ~ \forall ~ \xx \in B(\ba,\de) ~z_{n+1}(\xx)=x_n\}$.
Now on $\intt X^c$ we have $M_{n+1}(\xx)=F_{n+1}(\xx,x_n)=0$, identically, hence $M_{n+1}$ is constant and its gradient is ${\bf 0}$. By continuity of $\nabla_\xx M_{n+1}$, this nullity extends to the closure $\overline{\intt X^c}$.

However, we have seen in Proposition \ref{prop:fat} that $X^c$ is a fat and closed set, so that $\overline{\intt X^c}=X^c$. The assertion follows.
\end{proof}

Now, we can compute the derivatives of the local maxima functions of $P_i(\xx,\cdot)$ on the interval $I_i(\xx)$. Recall $m_i(\xx)=\exp(M_i(\xx))$, and hence $m_i(\xx)=\exp(F_i(\xx,z_i(\xx)))=P_i(\xx,z_i(\xx))=\max_{I_i(\xx)}P_i(\xx,.)$. Then, from the logarithmic derivative formulae in Proposition~\ref{prop:Miderivative} and Proposition~ \ref{prop:Mnplus1derivative} we also have the following.

\begin{proposition}\label{prop:miderivative}
Let $w$ be a twice continuously differentiable log-concave weight function.

At any $\ba \in S$ the interval maxima functions $m_i$ are continuously differentiable for all $i=1,\ldots,n+1$, and their $j$th partial derivatives are
\begin{equation}\label{mipartialderivative}
\frac{\partial m_i}{\partial x_j} (\ba)= \frac{\partial P_i}{\partial x_j} (\ba,z_i(\ba)),  \qquad(i=1,\ldots,n,~ j=1,\ldots,n),
\end{equation}
and for $i=n+1$ and arbitrary $j=1,\ldots,n$
\begin{equation}\label{mnplus1partialderivative}
\frac{\partial m_{n+1}}{\partial x_j} (\ba) =
\begin{cases}
\frac{\partial P_{n+1}}{\partial x_j}(\ba,z_{n+1}(\ba)), \qquad \qquad \qquad & \textrm{if} \quad z_{n+1}(\ba) > a_n,
\\[2mm]
\frac{\partial P_{n+1}}{\partial x_j} (\ba,a_n) +(P_{n+1})'_t(\ba,a_n)=0, \quad
& \textrm{if} \quad z_{n+1}(\ba)=a_n.
\end{cases}
\end{equation}
In other words,
\begin{equation}\label{migradient}
\nabla_{\xx} m_i(\ba) = \nabla_\xx P_i(\ba,z_i(\ba)), \qquad (i=1,\ldots,n) ,
\end{equation}
and
\begin{equation}\label{mnplus1gradient}
\nabla_{\xx} m_{n+1}(\ba) =
\begin{cases}
\nabla_\xx P_{n+1}(\ba,a_{n}), \qquad &\textrm{if} \quad z_{n+1}(\ba)>a_n,
\\[2mm] \nabla_\xx P_{n+1}(\ba,a_n)+(P_{n+1})'_t(\ba,a_n) \ee_n={\bf 0}, \quad &\textrm{if} \quad z_{n+1}(\ba)=a_n.
\end{cases}
\end{equation}
\end{proposition}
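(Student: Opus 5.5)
The plan is to derive everything from Propositions \ref{prop:Miderivative} and \ref{prop:Mnplus1derivative} via the identity $m_i=\exp(M_i)$, and then to convert logarithmic derivatives back into derivatives of $P_i$. For every $\ba\in S$ the value $M_i(\ba)=F_i(\ba,z_i(\ba))$ is finite, because $P_i>0$ on $I_i$. Hence $m_i=\exp\circ M_i$ is a composition of $C^1$ maps, so it is continuously differentiable, and the chain rule gives $\nabla_\xx m_i(\ba)=m_i(\ba)\nabla_\xx M_i(\ba)$.

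Next, the point $(\ba,z_i(\ba))$ lies where $P_i>0$ (indeed in $B(\ba,\de)\times N$ of Lemma \ref{l:localization-wi}). There $F_i=\log P_i$, so $\nabla_\xx F_i=\nabla_\xx P_i/P_i$ and $(F_i)'_t=(P_i)'_t/P_i$. Also $m_i(\ba)=P_i(\ba,z_i(\ba))$.

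For $1\le i\le n$, and for $i=n+1$ with $z_{n+1}(\ba)>a_n$, we multiply \eqref{Migradient}, respectively the first case of \eqref{Mnplus1gradient}, by $m_i(\ba)$. The factor $P_i(\ba,z_i(\ba))$ then cancels, which gives \eqref{mipartialderivative} and the first line of \eqref{mnplus1partialderivative}. In the gradient display \eqref{mnplus1gradient} the point of evaluation should be read as $z_{n+1}(\ba)$; I would note that this is the intended meaning.

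For $i=n+1$ with $z_{n+1}(\ba)=a_n$ we have $m_{n+1}(\ba)=P_{n+1}(\ba,a_n)=1$. Multiplying the second case of \eqref{Mnplus1gradient} by $1$ gives $\nabla_\xx P_{n+1}(\ba,a_n)+(P_{n+1})'_t(\ba,a_n)\ee_n$. Proposition \ref{prop:Mnplus1derivative} then shows this vector equals $\nabla_\xx M_{n+1}(\ba)=\mathbf 0$.

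As a consistency check, the vanishing can also be seen directly from Lemma \ref{l:magic}. Take $t=a_n$ in \eqref{magic-middleform}. Since $h_j(\ba,a_n)=\de_{jn}$, we get $\partial P_{n+1}/\partial x_j(\ba,a_n)=0$ for $j<n$, and $\partial P_{n+1}/\partial x_n(\ba,a_n)=-(P_{n+1})'_t(\ba,a_n)$. So the sum is zero.

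The only delicate point is that $z_{n+1}(\ba)=a_n$ means exactly $\ba\in X^c$, and this case depends on the fatness of $X^c$ already used in Proposition \ref{prop:Mnplus1derivative}. With the direct check from Lemma \ref{l:magic}, even that dependence is unnecessary for the vanishing of the gradient itself. The continuity of the derivative, however, still comes from Proposition \ref{prop:Miderivative}.
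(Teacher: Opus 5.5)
Your proposal is correct and is essentially the paper's own argument: exponentiate the logarithmic-derivative formulas of Propositions \ref{prop:Miderivative} and \ref{prop:Mnplus1derivative}, and use $m_i(\ba)=P_i(\ba,z_i(\ba))$ (which equals $1$ when $z_{n+1}(\ba)=a_n$) to cancel the factor $1/P_i$. Your side remarks are also right: the evaluation point in the first case of \eqref{mnplus1gradient} should be $z_{n+1}(\ba)$. Moreover, the vanishing in the second case follows directly from Lemma \ref{l:magic} at $t=a_n$ (equivalently, from differentiating the identity $P_{n+1}(\xx,x_n)\equiv 1$), so the fatness of $X^c$ is not actually needed for that step.
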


\section{Consequences for the signs of partial derivatives}\label{sec:derivatives}

\begin{corollary}[Magic formula]\label{cor:magic}
Let $w$ be a twice continuously differentiable log-concave weight function. Then for every $1\le j\le n$ and $1\le i\le n$ it holds
\begin{equation}\label{der_identity}
\frac{\partial m_i}{\partial x_j}(\xx)=-h_j(\xx,z_i(\xx))(P_i)'_t(\xx,x_j),
\end{equation}
and for the derivative of $m_{n+1}$, we have for every $1\le j\le n$ that
\begin{equation}\label{der_final_n}
\frac{\partial m_{n+1}}{\partial x_j}(\xx)=\begin{cases}
-h_j(\xx,z_{n+1}(\xx))(P_{n+1})'_t(\xx,x_j) & \text{ if } z_{n+1}(\xx)\in \intt I_{n+1},
\\ 0  & \text{ if } z_{n+1}(\xx)=x_n.
\end{cases}
\end{equation}
\end{corollary}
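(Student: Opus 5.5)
The plan is to combine Proposition \ref{prop:miderivative} with Lemma \ref{l:magic}. The formula is then immediate in the cases where the maximum point is interior, and the endpoint case is already contained in the earlier propositions.

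\textbf{Case $1\le i\le n$.} By Corollary \ref{c:ziininttIi} we have $z_i(\xx)\in\intt I_i$. Proposition \ref{prop:miderivative}, formula \eqref{mipartialderivative}, then gives
\[
\frac{\partial m_i}{\partial x_j}(\xx)=\frac{\partial P_i}{\partial x_j}(\xx,z_i(\xx)).
\]
Lemma \ref{l:magic} holds for every real $t$, since $\Wn=w\Pn$ is an ECHS of $C^1$ functions. Substituting $t=z_i(\xx)$ into \eqref{magic-middleform} gives $-h_j(\xx,z_i(\xx))(P_i)'_t(\xx,x_j)$, which is \eqref{der_identity}.

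\textbf{Case $i=n+1$, $z_{n+1}(\xx)\in\intt I_{n+1}$.} The first branch of \eqref{mnplus1partialderivative} gives $\partial m_{n+1}/\partial x_j=\frac{\partial P_{n+1}}{\partial x_j}(\xx,z_{n+1}(\xx))$. Lemma \ref{l:magic} with $i=n+1$ and $t=z_{n+1}(\xx)$ then yields the first line of \eqref{der_final_n}.

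One point needs care here. Formula \eqref{mnplus1gradient} is written with $\nabla_\xx P_{n+1}(\ba,a_n)$ in the interior case, but this is evidently a typo for $\nabla_\xx P_{n+1}(\ba,z_{n+1}(\ba))$. I would take the correct version directly from \eqref{mnplus1partialderivative}, or from \eqref{Mnplus1partialderivative} via $m_i=\exp(M_i)$ and $\partial P/\partial x_j=P\,\partial F/\partial x_j$.

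\textbf{Case $i=n+1$, $z_{n+1}(\xx)=x_n$.} This is exactly Proposition \ref{prop:Mnplus1derivative}, transferred to $m_{n+1}$ via $\nabla m_{n+1}=m_{n+1}\nabla M_{n+1}$. As a consistency check, Lemma \ref{l:magic} at $t=x_n$ gives $\partial P_{n+1}/\partial x_j(\xx,x_n)=-\delta_{jn}(P_{n+1})'_t(\xx,x_n)$, because $h_j(\xx,x_n)=\delta_{jn}$. Hence the second branch of \eqref{mnplus1gradient} is indeed $\mathbf 0$, and the two routes agree.

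The only substantive step, the vanishing of the gradient on all of $X^c$ and not just on $\intt X^c$, was already handled through fatness (Proposition \ref{prop:fat}). So I expect no real obstacle beyond correctly citing the interior/endpoint dichotomy and the typo noted above.
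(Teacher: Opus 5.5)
Your proposal is correct and follows the paper's own proof: substitute $t=z_i(\xx)$ into Lemma \ref{l:magic} via \eqref{mipartialderivative} and the first branch of \eqref{mnplus1partialderivative}, and take the endpoint case from the vanishing gradient in \eqref{mnplus1gradient}. You are right that the $a_n$ in the first branch of \eqref{mnplus1gradient} is a typo for $z_{n+1}(\ba)$; moreover, your consistency check via $h_j(\xx,x_n)=\delta_{jn}$ and $P_{n+1}(\xx,x_n)=1$ already shows that the endpoint gradient from Proposition \ref{prop:Miderivative} is zero, so the fatness argument is not actually needed for this corollary.
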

\begin{proof}
We apply Lemma \ref{l:magic} for arbitrary $i=1,\ldots,n+1$ and $j=1,\ldots,n$. Through this, \eqref{mipartialderivative} furnishes the assertion \eqref{der_identity} for $i=1,\ldots n$, and the first part of \eqref{mnplus1partialderivative} yields the first part of \eqref{der_final_n} if $i=n+1$ and $z_{n+1}(\xx)>x_n$. Finally, for the second part of \eqref{der_final_n} -- i.e., for the case when $z_{n+1}(\xx)=x_n$ -- we can directly refer to the last equality of \eqref{mnplus1gradient}.
\end{proof}

Kilgore and Cheney worked out the $j=i$ and $i-1$ cases of the above in order to show that $m_i$ and $m_{i+1}$ are monotone in the opposite direction in the variable $x_i$. However, Lemma 3 in \cite{Kilgore-Cheney} uses that $\II=[a,b]$, $\WW_n$ contains the constant functions and the endpoints $a$ and $b$ are among the interpolation nodes. Compare also Lemma 7 of \cite{Kilgore-Cheney}. So we give here the version reshaped to our needs. Recall the definition of the subsets $X:=\{z_{n+1}(\xx) > x_n\}$ and $X^c$ its complement in $S$.

\begin{corollary}\label{c:countermonotonicity}
Let $w$ be a twice continuously differentiable log-concave weight function.

If $\xx\in X$, then
\begin{equation}\label{signchange}
\frac{\partial m_i}{\partial x_i}(\xx)>0  \qquad \text{ and } \qquad
\frac{\partial m_{i+1}}{\partial x_i}(\xx)<0, \qquad (i=1,\dots,n).
\end{equation}

If $\xx \in X^c$, then \eqref{signchange} still holds for every $i<n$, but for $i=n$ we have
\begin{equation}\label{signchangeXc}
\frac{\partial m_n}{\partial x_n}(\xx)>0  \qquad \text{ and } \qquad
\frac{\partial m_{n+1}}{\partial x_n}(\xx)=0.
\end{equation}
\end{corollary}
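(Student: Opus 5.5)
We plan to derive everything from the magic formula, Corollary \ref{cor:magic}, by determining the signs of the two factors $h_j(\xx,z_i(\xx))$ and $(P_i)'_t(\xx,x_j)$ in the cases $j=i$ and $j=i-1$ (after reindexing: $m_{i+1}$ differentiated in $x_i$).

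\emph{Sign of $h_i$ at $z_i$ and $z_{i+1}$.} The function $h_i(\xx,\cdot)$ vanishes at all nodes $x_k$, $k\ne i$, and equals $1$ at $x_i$. By the ECHS property it has no other zeros in $(0,\infty)$, so it is positive on $\intt I_i\cup\intt I_{i+1}$. By Corollary \ref{c:ziininttIi}, $z_i(\xx)\in\intt I_i$ for $i\le n$, hence $h_i(\xx,z_i(\xx))>0$. Likewise $z_{i+1}(\xx)\in\intt I_{i+1}$ when $i+1\le n$, and also when $i=n$ and $\xx\in X$ (Proposition \ref{prop:equiv}(3)). In all these cases $h_i(\xx,z_{i+1}(\xx))>0$.

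\emph{Sign of $(P_i)'_t$ and $(P_{i+1})'_t$ at $x_i$.} Take first $P_i$ on $I_i=[x_{i-1},x_i]$, for $i\le n$. Here $F_i=\log P_i$ is strictly concave on $J_i\supset I_i$ (Lemma \ref{l:strict_concave_place_zi}), and its unique critical point is $w_i=z_i\in\intt I_i$. Hence $F_i$ is strictly decreasing on $[z_i,x_i]$, and a strictly concave function cannot have a vanishing derivative there except at $w_i$. Thus $(F_i)'_t(\xx,x_i)<0$, and since $P_i(x_i)=1>0$ we get $(P_i)'_t(\xx,x_i)<0$. The formula \eqref{der_identity} then gives $\partial m_i/\partial x_i=-h_i(z_i)(P_i)'_t(x_i)>0$.

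Now take $P_{i+1}$ with $i+1\le n$. Its critical point $w_{i+1}=z_{i+1}$ lies in $\intt I_{i+1}$, to the right of $x_i$, so $F_{i+1}$ is strictly increasing at the left endpoint: $(P_{i+1})'_t(\xx,x_i)>0$. Hence $\partial m_{i+1}/\partial x_i<0$. For $i=n$ and $\xx\in X$, Proposition \ref{prop:equiv}(5) gives $(P_{n+1})'_t(\xx,x_n)>0$ directly. The first line of \eqref{der_final_n} then yields $\partial m_{n+1}/\partial x_n<0$.

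\emph{The case $\xx\in X^c$.} For $i<n$ nothing in the above argument used $\xx\in X$, so \eqref{signchange} persists. For $i=n$, the claim $\partial m_n/\partial x_n>0$ holds by the same argument, and $\partial m_{n+1}/\partial x_n=0$ is the second line of \eqref{der_final_n}, since $z_{n+1}=x_n$ on $X^c$.

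The only delicate point is the strictness of the derivative signs at the endpoints. We handle it through strict concavity of $F_i$ together with uniqueness of the critical point $w_i$, rather than through root counting for $P_i'$.
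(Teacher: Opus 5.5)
Your proposal is correct and follows the paper's proof essentially step for step: the magic formula of Corollary \ref{cor:magic}, positivity of $h_i(\xx,\cdot)$ on $\intt(I_i\cup I_{i+1})$, strict log-concavity of $P_i$ with unique critical point $w_i=z_i$ to get the strict signs of $(P_i)'_t(\xx,x_i)$ and $(P_{i+1})'_t(\xx,x_i)$, and the second line of \eqref{der_final_n} on $X^c$. The only cosmetic difference is that for $i=n$, $\xx\in X$ you cite Proposition \ref{prop:equiv}(5), whereas the paper repeats the strict-concavity argument using $w_{n+1}=z_{n+1}>x_n$.
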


\begin{proof}
First let $\xx \in X$. By Lemma \ref{l:strict_concave_place_zi}, we know that $z_i(\xx)\in \intt I_i$ for every $\xx\in S$ and $i=1,\dots,n$, and for $\xx\in X$ we even have $z_{n+1}\in \intt I_{n+1}$, too. So, applying Lemma \ref{l:magic}, we get that
\begin{equation}\label{magicapplied}
\begin{aligned}
\frac{\partial m_i}{\partial x_i}(\xx)&=-h_i(\xx,z_i(\xx))(P_i)'_t(\xx,x_i),
\\
\frac{\partial m_{i+1}}{\partial x_i}(\xx)&=-h_i(\xx,z_{i+1}(\xx))(P_{i+1})'_t(\xx,x_i).
\end{aligned}
\end{equation}
We also know, that the function $h_i(\xx,.)$ is positive on $\intt (I_i\cup I_{i+1})$ --see its definition in \eqref{eq:hkt} -- and thus $\sign h_i(\xx,z_i(\xx))=\sign h_i(\xx,z_{i+1}(\xx))=+1$.

Moreover, the following properties hold for the function $P_i$ and its derivative $(P_i)'_t$ on $I_i$ for all $i=1,\ldots,n$. $P_i$ is strictly log-concave on $I_i$;  $P_i(\xx,x_{i-1})=P_i(\xx,x_i)=1$; $ P_i(\xx,t)>1$ for every $t \in (x_{i-1},x_i)$; and $\restr{(P_i)'_t}{I_i}(t)=0 \Leftrightarrow t=z_i(\xx)$.
Consequently, we get that $(P_i)'_t(\xx,x_i)<0$ ($i=1,\ldots,n$) and $(P_{i+1})'_t(\xx,x_i)>0$ ($i=1,\ldots,n-1$). Furthermore, in view of $\xx \in X$ and $z_{n+1}(\xx)>x_n$, strict log-concavity furnishes the last inequality even for $i=n$. So, finally, these together with $\sign h_i(\xx,z_i(\xx))=\sign h_i(\xx,z_{i+1}(\xx))=+1$ imply \eqref{signchange} in view of \eqref{magicapplied}.

Let now $\xx \in X^c$. The calculation showing that $\dfrac{\partial m_i}{\partial x_i}(\xx)>0$
remains in effect for every $i=1,\ldots,n$, hence in particular for $i=n$, too. However, for $i=n+1$ the identity $\dfrac{\partial m_{n+1}}{\partial x_n}(\xx)=0$ replaces \eqref{magicapplied} in view of the second part of \eqref{der_final_n}, thus leading to the second half of \eqref{signchangeXc} directly.
\end{proof}

Observe that in particular we obtain the following corollary, which will gain importance in Section \ref{sec:connected}.

\begin{corollary}\label{c:n-n-monotonicity} Let $w$ be a twice continuously differentiable log-concave weight function.

Then $m_{n+1}(\xx)$ is monotonically non-increasing with $0=x_0<x_1<\dots<x_{n-1}$ fixed and $x_n$ changing in $(x_{n-1},\infty)$.
\end{corollary}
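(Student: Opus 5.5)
The idea is to fix $\wx=(x_1,\ldots,x_{n-1})\in S^{n-1}$ and study the single-variable function $g(s):=m_{n+1}(\wx,s)$ on $(x_{n-1},\infty)$. By Proposition \ref{prop:miderivative}, $g$ is continuously differentiable, and $g'(s)=\frac{\partial m_{n+1}}{\partial x_n}(\wx,s)$. It therefore suffices to show $g'(s)\le 0$ for every $s$; monotonicity then follows from the mean value theorem.

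The sign of $g'(s)$ comes directly from Corollary \ref{c:countermonotonicity}, splitting according to whether $(\wx,s)\in X$ or $(\wx,s)\in X^c$.
\begin{itemize}
\item If $(\wx,s)\in X$, then \eqref{signchange} with $i=n$ gives $\frac{\partial m_{n+1}}{\partial x_n}<0$.
\item If $(\wx,s)\in X^c$, then \eqref{signchangeXc} gives $\frac{\partial m_{n+1}}{\partial x_n}=0$.
\end{itemize}
So $g'\le0$ on the whole half-line.

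One can add more precision by invoking Lemma \ref{l:xstar}. The points with $s<x_n^*=\phi(\wx)$ form exactly the section of $X$, so $g$ is strictly decreasing on $(x_{n-1},x_n^*]$. For $s\ge x_n^*$, $g\equiv1$, since $m_{n+1}=1$ on $X^c$ by definition. This also gives a direct check that does not rely on derivatives: in the proof of Lemma \ref{l:xstar} it was shown that $m_{n+1}(\xx)<m_{n+1}(\xx')$ whenever $x_n'<x_n$ and $\xx\in X$ (or $\xx$ is the boundary point).

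The only delicate point is the use of differentiability of $m_{n+1}$ across the boundary $\partial X$. Near this boundary the maximum point $z_{n+1}$ switches from interior to endpoint. This is exactly what Propositions \ref{prop:Miderivative} and \ref{prop:Mnplus1derivative} handle, so there is no genuine obstacle. If one prefers to avoid derivatives there, one can instead combine continuity of $m_{n+1}$ with the two regimes described above.
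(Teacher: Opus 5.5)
Your proof is correct and takes the same route as the paper, which states this corollary as an immediate consequence of Corollary \ref{c:countermonotonicity}: $\partial m_{n+1}/\partial x_n<0$ on $X$ and $=0$ on $X^c$, combined with the continuous differentiability of $m_{n+1}$ from Proposition \ref{prop:miderivative}. Your derivative-free check via the proof of Lemma \ref{l:xstar} (strict decrease on the section of $X$, and $m_{n+1}\equiv 1$ for $x_n\ge x_n^*$) is also valid, and it avoids the question of differentiability across $\partial X$.
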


The following statement also obtains from Lemma \ref{l:magic}, but to read it down is a little trickier. This we will need in Section \ref{sec:majorization}.

\begin{lemma}\label{l:partialmixn} Let $w$ be a twice continuously differentiable log-concave weight function.

If $\xx \in X^c$, then we have that $\dfrac{\partial m_i}{\partial x_n}(\xx) >0$ for all $i=1,\ldots,n$.
\end{lemma}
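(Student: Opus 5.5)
The plan is to read the sign off the magic formula, exactly as in Corollary \ref{c:countermonotonicity}. For $i\le n$ we have $z_i(\xx)\in\intt I_i$ by Corollary \ref{c:ziininttIi}, so \eqref{der_identity} gives
\[
\frac{\partial m_i}{\partial x_n}(\xx)=-h_n(\xx,z_i(\xx))\,(P_i)'_t(\xx,x_n).
\]
This splits the task into two sign determinations.

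\textbf{First factor.} The function $h_n(\xx,\cdot)=\frac{w(\cdot)}{w(x_n)}\ell_n$ has simple zeros exactly at $x_0,\dots,x_{n-1}$ and is positive on $(x_{n-1},\infty)$. Hence on $\intt I_i$ its sign is $(-1)^{n-i}$ for $i\le n$; for $i=n$ this means positive. It therefore suffices to show
\[
\sign (P_i)'_t(\xx,x_n)=(-1)^{n-i+1}\qquad (1\le i\le n).
\]
Since $P_i(\xx,x_n)=\ve_{n,i}=(-1)^{n-i}$, this says that $|P_i(\xx,\cdot)|$ is strictly decreasing at $x_n$. For $i=n$ it is already known from the proof of Corollary \ref{c:countermonotonicity}: log-concavity and $z_n\in\intt I_n$ give $(P_n)'_t(\xx,x_n)<0$.

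\textbf{Case $i<n$: locating $x_n$.} By Section \ref{sec:zeros}, $x_n$ lies between the zero $y_n^{(i)}\in I_n$ of $P_i$ and the next zero of $P_i$. That next zero is $y_{n+1}^{(i)}$ if $i>r$, and $+\infty$ otherwise. On this maximal sign-interval, $\log|P_i|$ is strictly concave, by the computation \eqref{Fisecond} applied to $|p_i|$. So $(P_i)'_t$ has there a unique zero $\omega_i$, and $|P_i|$ strictly decreases to the right of $\omega_i$. The goal thus reduces to proving
\[
\omega_i< x_n .
\]

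\textbf{Case $i<n$: using $\xx\in X^c$.} Here the hypothesis enters. By Proposition \ref{prop:equiv} and Lemma \ref{l:xstar}, $\xx\in X^c$ means that the last derivative zero $w_{n+1}$ of $P_{n+1}$ satisfies $w_{n+1}\le x_n$. So it suffices to show $\omega_i<w_{n+1}$, which I would get from the strict interlacing of derivative zeros in Corollary \ref{cor:derivativeroots}, i.e.\ Milev--Naidenov, or Theorem \ref{Markov_exp_Pr} when $\deg P_r=n-1$. The indexing splits into two cases.
\begin{itemize}
\item If $i\le r$ with $\deg P_i=n$: $P_i$ has the extra zero $y_0^{(i)}<0$. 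By Corollary \ref{c:order_roots_total} its zeros are $y_0^{(i)}<y_1^{(n+1)}<y_1^{(i)}<\dots<y_n^{(n+1)}<y_n^{(i)}$, so $P_i\prec P_{n+1}$. With Property (P) ($\de_0=0$, $\de_n=1$), $\omega_i$ is the $n$-th derivative zero of $P_i$ and $w_{n+1}$ the $n$-th of $P_{n+1}$. Strict interlacing then gives $\omega_i<w_{n+1}$.
\item If $r<i<n$: $P_{n+1}\prec P_i$ and $P_i$ has its extra zero $y_{n+1}^{(i)}>x_n$. Now $\omega_i$ is the $(n-1)$-st derivative zero of $P_i$, and $\omega_i\in(y_n^{(i)},y_{n+1}^{(i)})$. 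Interlacing traps it between the $(n-1)$-st and $n$-th derivative zeros of $P_{n+1}$, so again $\omega_i<w_{n+1}$.
\item The degenerate case $i=r$, $a_r=0$, is handled the same way via Theorem \ref{Markov_exp_Pr}; then $\omega_r$ is the last of the $n-1$ derivative zeros.
\end{itemize}
Combining, $\omega_i<w_{n+1}\le x_n$, so $(P_i)'_t(\xx,x_n)$ has sign $(-1)^{n-i+1}$, and the product above is positive.

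\textbf{Expected obstacle.} The main difficulty is the bookkeeping of which derivative zero of $P_i$ is paired with $w_{n+1}$ in the interlacing. The strict inequality $\omega_i<w_{n+1}$ must come out uniformly across the three regimes $i<r$, $r<i<n$, and the degenerate $i=r$ with $a_r=0$. It must also survive the endpoint case $w_{n+1}=x_n$, where strictness of the interlacing is essential.
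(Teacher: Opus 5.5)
Your proof is correct for the exponential weight, but it gets the key sign $\sign(P_i)'_t(\xx,x_n)=(-1)^{n-i+1}$ by a genuinely different and much heavier route than the paper. The first half is the same as the paper's: the magic formula \eqref{der_identity} and $\sign h_n(\xx,z_i(\xx))=(-1)^{n-i}$.

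For the second factor the paper locates no critical point at all. It writes
\[
|P_i|'_t(\xx,x_n)-(P_{n+1})'_t(\xx,x_n)=\sum_{j=0}^{n}\bigl((-1)^{n-i}\varepsilon_{j,i}-\varepsilon_{j,n+1}\bigr)h_j'(x_n)=2\sum_{j=0}^{i-1}(-1)^{n+j+1}h_j'(x_n),
\]
and observes that $\sign h_j'(x_n)=(-1)^{n+j}$, so every term is negative. This gives $|P_i|'_t(\xx,x_n)<(P_{n+1})'_t(\xx,x_n)\le 0$ on $X^c$. The argument uses only that the $h_j$ have simple zeros at the nodes. So it holds for every $\xx$ and every $C^2$ log-concave weight, exactly as the lemma is stated.

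Your argument instead needs three further inputs:
\begin{itemize}
\item the root bookkeeping of Section~\ref{sec:zeros};
\item Property (P), to have and identify $\omega_i$;
\item above all, the strict interlacing of derivative zeros, Corollary~\ref{cor:derivativeroots}.
\end{itemize}
The paper establishes Corollary~\ref{cor:derivativeroots} only for $w(t)=e^{-t}$, via Theorem~\ref{Milev_Naidenov}. So as written you prove the lemma only for the exponential weight. That is the case used later, but it is narrower than the statement. What you gain is the geometric fact $\omega_i<w_{n+1}$ for every $\xx$, which explains why $X^c$ forces the sign.

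There is one slip, exactly where you expected trouble. By Lemma~\ref{l:zerosinsame}, $y_k^{(i)}<y_k^{(n+1)}$ whenever $i<k\le n$. So your chain of zeros must end $\dots<y_n^{(i)}<y_n^{(n+1)}$, not $y_n^{(n+1)}<y_n^{(i)}$, and it must skip $I_i$, where $P_i$ has no zero. Your conclusion $P_i\prec P_{n+1}$ still holds thanks to the extra zero $y_0^{(i)}$. Hence the comparison of the $n$-th derivative zeros, $\omega_i<w_{n+1}$, is unaffected.
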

\begin{proof} Let $\xx$ be an element of $X^c$, i.e., $z_{n+1}(\xx)=x_n$, $m_{n+1}(\xx)=1$. Then $P_{n+1}(\xx,x_n)=1$ and $P_{n+1}(\xx,t)\le 1$ for $t>x_n$. It follows that $(P_{n+1})'_t(\xx, x_n)\le 0$. Consider any other $P_i$ with $1 \le i\le n$. Then for $t>x_{n}$, $|P_i(\xx,t)| \le P_{n+1}(\xx,t)$ by construction. Consider derivatives close to $x_n$, where we have also $|P_i|\approx 1$, hence $|P_i|$ differentiable. Note that $|\cdot|'=\sign$ and hence $|P_i|'_t(\xx, x_n)=\sign(P_i(\xx, x_n)) (P_i)'_t(\xx, x_n)$. We therefore obtain
\begin{align*}
|P_i|'_t(\xx, x_n) & -(P_{n+1})'_t(\xx, x_n)  = \sum_{j=0}^n \left(\sign(P_i(\xx,x_n)) \vji - \ve_{j,n+1}\right)h_j'(x_n)
\\ & = \sum_{j=0}^n \left((-1)^{n-i} (-1)^{i+j+1+\chi_{i\le j}} - (-1)^{j+n+1+1+\chi_{n+1\le j}}\right)h_j'(x_n)
\\ & = \sum_{j=0}^n \left((-1)^{\chi_{i\le j}} +1\right)(-1)^{n+j+1} h_j'(x_n)=2 \sum_{j=0}^{i-1} (-1)^{n+j+1} h_j'(x_n).
\end{align*}
Now, $h_j$ has a maximal number of $n$ zeroes at the nodes $x_i$ with $i\ne j$, whence these nodes are simple, and cannot be zeroes of the derivatives $h_j'$. Further, $h_j$ changes sign from positive to negative at $x_{j+1}$, and so on, from $(-1)^{n+j+1}$ to $(-1)^{n+j}$ at $x_n$, so that $\sign h_j'(x_n)= (-1)^{n+j}$. It follows that all the terms in the above sum are negative, and therefore
\begin{equation}\label{sharpderivaltineq}
|P_i|'_t(\xx, x_n)  -(P_{n+1})'_t(\xx, x_n) < 0.
\end{equation}
Thus, we obtained a slightly stronger statement than the mere observation $|P_i|'_t(\xx, x_n)  \le (P_{n+1})'_t(\xx, x_n)$ following from $|P_i|(\xx, x_n) = P_{n+1}(\xx, x_n)=1$ and $|P_i(\xx,t)|\le P_{n+1}(\xx,t)$ for $t>x_n$. Moreover, in this calculation we did not use\footnote{It is true that for $\xx \in X$ we have $(P_{n+1})'_t(\xx, x_n)>0$, and \eqref{sharpderivaltineq} is not suitable for determining the sign of $|P_i|'_t(\xx, x_n)$, but nevertheless it is still a valid formula.} $\xx \in X^c$. Now if $\xx \in X^c$, then $|P_i|'_t(\xx, x_n)< (P_{n+1})'_t(\xx, x_n) \le 0$ (for $P_{n+1}(\xx, x_n)=1$ and $P_{n+1}\le 1$ at interior points of $I_{n+1}$), so that we find $|P_i|'_t(\xx, x_n)<0$.

Next we appeal to Lemma \ref{l:magic}, according to which
$$
\frac{\partial m_i}{\partial x_n} = -h_n(\xx,z_i(\xx))(P_i)'_t(\xx,x_n) \qquad (i=1,\ldots,n).
$$
Note that this formula fails for $i=n+1$ as we have set $\xx \in X^c$, where the partial derivative is just 0. For $i \le n$, however, we know that $h_n>0$ on $\intt I_n$, and then it changes sign at nodes and has constant sign on all further $I_i$, arriving to $(-1)^{n-i}$ on $I_i$ containing $z_i(\xx)$, i.e.,
\begin{equation}\label{hn_sign}
\sign h_n(\xx,z_i(\xx))=(-1)^{n-i}.
\end{equation}

This settles the sign of the first factor. As for the second factor, $$0>|P_i|'_t(\xx, x_n)=\sign (P_i(\xx, x_n)) (P_i)'_t(\xx, x_n) =(-1)^{n-i} (P_i)'_t(\xx, x_n)$$ resulting in $\sign (P_i)'_t(\xx, x_n) = (-1)^{n-i+1}$.

Combining the two findings, we are led to
$$
\sign\left(\frac{\partial m_i}{\partial x_n}\right) =(-1) (-1)^{n-i} (-1)^{n-i+1} =+1,
$$
for every $1 \le i \le n$.
\end{proof}

\section{Linear independence}\label{sec:linearindependence}

To prove nonsingularity of \eqref{Ak}, we shall work with the transpose of the derivative submatrices rather than with the submatrices themselves. While this may appear somewhat unusual at first, we adopt this convention in order to follow the notation used in \cite{K-AMH}.

\begin{definition}\label{Amatrix} For arbitrary $\vv \in S$ we define the column vectors
$$
\ba_i:=\ba_i(\vv):= \left[ a_{j,i} \right]_{j=1}^n :=\nabla_{\xx} m_i(\vv) =\left[\frac{\partial m_i}{\partial x_j}(\vv)\right]_{j=1}^n \qquad (i=1,\ldots,n+1),
$$
and the matrix
$$
A:=A(\vv):=\left[\ba_i(\vv)\right]_{i=1}^{n+1}= \left[\frac{\partial m_i}{\partial x_j}(\vv)\right]_{j=1, i=1}^{n,n+1}.
$$
Further, we denote by $A_k$ the square matrices, obtained by deleting the $k$th column $\ba_k$ from $A$, and for the (Jacobi) determinants of the $A_k$ we write $D_k:=\det A_k$.
\end{definition}

\begin{definition}\label{Cmatrix} For arbitrary\footnote{In fact, we need them only for $\vv \in X^c$ -- but the definition can be put in general as well.} $\vv \in S$ we define the column vectors
$$
\cc_i(\vv):= \left[ c_{j,i} \right]_{j=1}^n := \left[\frac{\partial P_i}{\partial x_j} (\vv,w_i(\vv)) \right]_{j=1}^n ,
$$
and the matrix $C$ obtained from these column vectors:
$$
C:=C(\vv):=\left[c_{j,i} \right]_{j=1, i=1}^{n,n+1}:=\left[\frac{\partial P_i}{\partial x_j} (\vv,w_i(\vv))\right]_{j=1, i=1}^{n,n+1}=\left[\cc_1, \ldots,\cc_n, \cc_{n+1}\right].
$$
Further, we denote by $C_k$ the square matrices, obtained by deleting the $k$th column $\cc_k$ from $C$.
\end{definition}

Note that for $i=1,\ldots,n$ we have $z_i(\xx)=w_i(\xx)$, therefore it is all the same if we consider $z_i(\xx)$ or $w_i(\xx)$. Similarly, $z_{n+1}(\xx)=w_{n+1}(\xx)$ if $z_{n+1}(\xx)>x_n$, i.e., when $\xx \in X$. In these cases Proposition \ref{prop:miderivative} yields
$$
a_{j,i}:=\frac{\partial m_i}{\partial x_j}(\xx)=\frac{\partial P_i}{\partial x_j} (\xx,z_i(\xx))=\frac{\partial P_i}{\partial x_j} (\xx, w_i(\xx))=:c_{j,i} .
%%% -h_j(\xx,z_i(\xx))(P_i)'_t(\xx,x_j).
$$
In particular it follows that for $\xx \in X$ we have $A(\xx)=C(\xx)$, whereas for arbitrary $\xx \in S$ we have $\cc_i(\xx)=\ba_i(\xx)$ for at least $i=1,\dots,n$.

For $i=n+1$, however, by definition $\ba_{n+1}(\xx)= \nabla_\xx m_{n+1}(\xx)$, and it is equal to $\cc_{n+1}$ only if $w_{n+1}(\xx)=z_{n+1}(\xx)$ (i.e., when $\xx \in X$), but is just ${\bf 0}$ when $\xx \in X^c$. In this case the submatrices $A_i(\xx)$ contain a zero column for $i=1,\ldots,n$, and become singular. Nevertheless, as is seen from \eqref{magic-middleform} putting $t=w_{n+1}(\xx)$, we still have
$$
c_{j,n+1}=\frac{\partial P_{n+1}}{\partial x_j} (\xx,w_{n+1}(\xx))=-h_j(\xx,w_{n+1}(\xx))(P_{n+1})'_t(\xx,x_j).
$$
For the following argument, which is an adaptation of the one of \cite{CBoorPinkus} and will use the elegant general formulation of \cite{Kilgore1985}, we therefore can argue by considering $C$ and then handle the changes in case $A \ne C$ (i.e., when the node vector $\xx$ belongs to $X^c$).

We start with
\begin{align*}
c_{j,i}&:=\frac{\partial P_i}{\partial x_j}(\xx, w_i(\xx))=-h_j(w_i)(P_i)_t'(\xx, x_j)
\\&
=\frac{w(w_i) \prod_{l=0}^n(w_i-x_l) ~ (P_i)_t'(\xx, x_j)}{ w(x_j) \prod_{l=0,~l\ne j}^n (x_j-x_l) ~ (x_j-w_i)}.
\end{align*}
Singularity of any of the matrices $C_i$ is equivalent to the singularity of the derived matrix\footnote{This useful reformulation was suggested by Ditrich Braess, and was duly recorded in the papers of Kilgore and Cheney \cite{Kilgore-Cheney}, Kilgore \cite{K-AMH}, and also of deBoor-Pinkus \cite{CBoorPinkus}.} when we divide the $i$th column by $w(w_i) \prod_{l=0}^n(w_i-x_l)$ and multiply the $j$th row by $w(x_j) \prod_{l=0,~l\ne j}^n (x_j-x_l)$, So, equivalently to the aim of determining the singularity properties of the matrices $C_k$, we can restrict to analyse singularity of the sub-matrices $Q_k$, obtained by dropping the $k$th column, of the matrix
\begin{equation}\label{Qmatrix}
Q:=[q_i(x_j)]_{j=1, i=1}^{n,n+1}, \quad \text{where} \quad q_i(t):= \frac{(P_i)_t'(\xx, t)}{t-w_i} \quad(i=1,\ldots,n).
\end{equation}

\begin{lemma}\label{l:Kilgore_assump}
Put $w(t)=\exp(-t)$ and $\W=\Wn=w\Pn$ the arising ECHS.

Then the functions $q_i(t):=\frac{(P_i)_t'(\xx, t)}{t-w_i}$ $(i=1,\dots,n+1)$ satisfy the following properties:
\begin{itemize}
\item[1)] $q_i(w_j)\ne 0$ for every $i, j=1,\dots,n+1$,
\item[2)] $q_i$ has a unique, simple root\footnote{Further, it has one more in $\RR\setminus [w_1,w_{n+1}]$, except when $i=r$ and $a_r=0$ -- but we don't need this later on.} in $[w_{j},w_{j+1}]$ for all $j\ne i-1,i$,
\item[3)] $q_i$ does not have any root in $[w_{i-1},w_{i+1}]$.
\end{itemize}
\end{lemma}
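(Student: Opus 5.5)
The plan is to read everything off the root configuration of the derivatives $P_i'$ in Corollary \ref{cor:derivativeroots}. The key observation is that $w_i$ is itself a (simple) root of $(P_i)'_t$: it is the unique critical point of $F_i=\log P_i$ in $J_i$, and $(F_i)'_t=(P_i)'_t/P_i$ with $P_i>0$ on $J_i$. Hence $q_i=(P_i)'_t/(t-w_i)$ is, up to the weight, the polynomial part of $(P_i)'_t$ with the root factor at $w_i$ removed. Its roots are exactly the elements of $W_i\setminus\{w_i\}$, all simple, because the roots of $(P_i)'_t$ are simple: its polynomial part has degree $n$ (or $n-1$ if $i=r$, $a_r=0$), and it carries the $n$ (resp. $n-1$) distinct roots exhibited in Section \ref{sec:zeros}. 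So the three claims become statements about how the sets $W_i$ are positioned relative to the points $w_1<\dots<w_{n+1}$.

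\textbf{Step 1: locate each $w_i$ inside its own sequence $W_i$.} The root $w_i\in J_i$ is the unique root of $(P_i)'_t$ lying between the two zeros of $P_i$ adjacent to $I_i$. By Rolle and Property (P), with $\de_0=0$ and $\de_n=1$ for the exponential weight, the other roots of $P_i'$ sit one in each gap between consecutive zeros of $P_i$, plus one beyond the last zero. Comparing with Corollary \ref{c:order_roots_total}, I will check that $w_i$ is the $i$th element of $W_i$, counting the extra root to the left of $x_0$ when it exists. Some bookkeeping is needed here: the case $i\le r$, where $P_i$ has a zero in $I_0$, and the degenerate case $i=r$ with $a_r=0$ must be treated separately. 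The claim to verify is that in the global counter-cyclic interlacing the $w_i$ are the "diagonal" elements. Monotonicity $w_1<w_2<\dots<w_{n+1}$ should follow from $w_i\in J_i$ near $I_i$, together with the strict interlacing.

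\textbf{Step 2: deduce 1)--3) from strict interlacing.} Strict interlacing gives $W_i\cap W_j=\emptyset$ for $i\ne j$, so $(P_i)'_t(w_j)\ne0$ for $j\ne i$. For $j=i$ we have $q_i(w_i)=P_i''(w_i)/1\ne0$ by simplicity. This proves 1).

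For 2) and 3), I use the order $W_r\prec W_{r-1}\prec\dots\prec W_1\prec W_{n+1}\prec\dots\prec W_{r+1}$. Between consecutive diagonal elements $w_j<w_{j+1}$, the interlacing forces exactly one element of each $W_i$ with $i\ne j,j+1$. This yields exactly one simple root of $q_i$ in $[w_j,w_{j+1}]$ for $j\ne i-1,i$, which is 2). For $i\in\{j,j+1\}$, the elements of $W_i$ adjacent to $w_i$ lie outside $[w_{i-1},w_{i+1}]$, since $W_{i\pm1}$ must separate them from $w_i$. So $q_i$ has no root there, which is 3).

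\textbf{Main obstacle.} I expect the hardest part to be Step 1 together with the exact counting: showing that the $k$th elements of the interlaced sequences, the ones selected as critical points $w_i$, are consistent with the cyclic shift, especially for $i=n+1$ (where $w_{n+1}$ may lie left of $x_n$) and for $i=r$ with $\#W_r=n-1$. To handle this, I will count elements of $\bigcup_i W_i$ in each interval $(x_{k-1},x_k)$ using the root locations of $P_i$ from Corollary \ref{c:order_roots_total}, and then compare the counts.
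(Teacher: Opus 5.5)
Your route is the paper's. You observe that $(P_i)'_t(\xx,w_i)=0$, invoke the strict counter-cyclic interlacing of the derivative root sets from Corollary \ref{cor:derivativeroots}, and read off 1)--3) from the positions of the $w_i$. The paper does not write out this last combinatorial step. It refers to the classical argument of \cite{Kilgore, CBoorPinkus} (Lemmas 11--12 of \cite{SzAMH}), after adjoining to $W_r$, when $a_r=0$, an auxiliary point to the left of all other roots. Your proof of 1), and your description of the roots of $q_i$ as the simple roots $W_i\setminus\{w_i\}$, are fine.

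The bookkeeping claim of Step 1, however, is wrong as stated, and Step 2 depends on it.
\begin{itemize}
\item By Property (P) with $\de_0=0$, $\de_n=1$, the roots of $(P_i)'_t$ preceding $w_i$ correspond to the gaps between consecutive zeros of $P_i$ that lie left of $J_i$.
\item For $i<r$, and for $i=r$ with $a_r\ne 0$, the zeros of $P_i$ begin with $y_0^{(i)}\in I_0$. There are $i-1$ such gaps, so $w_i$ is indeed the $i$th element of $W_i$.
\item For $i>r$, $P_i$ has no zero in $I_0$, so only $i-2$ gaps precede $J_i$ and $w_i$ is the $(i-1)$st element of $W_i$. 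For $i=n+1$ it is the last element; an $(n+1)$st element does not exist.
\item For $i=r$ with $a_r=0$, $w_r$ is the $(r-1)$st of the $n-1$ roots, i.e.\ the $r$th after the augmentation.
\end{itemize}

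With these corrected positions, split the merged increasing sequence of all roots into $n$ consecutive blocks. Each block contains one root of every $W_i$, in the order $r,r-1,\dots,1,n+1,n,\dots,r+1$. Then $w_i$ sits in block $i$ if $i\le r$, and in block $i-1$ if $i>r$. A direct count shows that strictly between $w_j$ and $w_{j+1}$ there are exactly $n-1$ roots, one from each $W_i$ with $i\ne j,j+1$.

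This gives two things. First, it gives the monotonicity of the $w_i$. Only $w_n<w_{n+1}$ is nontrivial, since $w_i=z_i\in\intt I_i$ for $i\le n$, while $w_{n+1}$ may lie left of $x_n$. Second, it gives exactly what your Step 2 needs.

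With your uniform ``$i$th element'' indexing, the count would already fail for the pair $w_r,w_{r+1}$: they would be separated by $2n$ roots instead of $n-1$.
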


\begin{proof}
The first thing we need to recall from our previous analysis is that the weighted polynomials $P_i$ have interlacing roots. Moreover, except for $P_r$ and in the case when $a_r=0$, all these polynomials have $n$ distinct real roots in $\RR$, and in case $a_r=0$, i.e., if the degree of $P_r$ is less than $n$, then its degree is $n-1$, and $P_r$ has $n-1$ distinct real roots in $\RR$. Furthermore, there are $n$ roots of the derivatives $P_i'$ ($i=1,\dots,n+1$), except for $P_r'$, if $a_r=0$, in which case $P_r'$ has only $n-1$ roots.

So, $P_i$'s ($i\in\{1,\dots,n+1\}\setminus\{r\}$) are oscillating polynomials of the Markov system $\WW_n$. Concerning $P_r$, the same is true, if $a_r\not=0$, and if $a_r=0$, then $P_r$ has $n-1$ roots, (therefore its degree is also $n-1$). Recall that the root sequences of the $P_i$ were strictly interlacing pairwise. Therefore, invoking the assumption $w(t)=\exp(-t)$, the Markov type theorems -- Theorem \ref{Milev_Naidenov} and Theorem \ref{Markov_exp_Pr} -- furnish that roots of the $P_i'$ are interlacing in the same way as the roots of the $P_i$ did.

Here we may use a technical modification to simplify our life. Namely, we can take $W_i$ to denote the root sequences of $P_i'$ for all $i\ne r$, and even for $i=r$ if $a_r \ne 0$; but to make the cardinality of $W_r$ to be $n$ (and interlace with the other $W_i$ the same way as in case $a_r\ne 0$) we may add an extra auxiliary point to $W_r$, chosen smaller than any other element of any of the $W_i$. With this technical modification, we have that each $W_i$ has cardinality $n$, moreover, they interlace in the order $W_r\prec W_{r-1}\prec\dots\prec W_1 \prec W_{n+1} \prec W_n \prec \dots \prec W_{r+1}$.

From here on we can repeat the classical argument going back to Kilgore \cite{Kilgore} and de Boor-Pinkus \cite{CBoorPinkus}. For a general formulation about pairwise interlacing sequences, this is well described, e.g., in \cite{SzAMH}, see Lemma 11 and Lemma 12 there.

It follows that the $q_i$ satisfy the properties 1)-- 3), listed above.
\end{proof}

Once we have the root distribution statements in Lemma \ref{l:Kilgore_assump}, and recall that all the $q_i$ belong to $\WW_{n-1}$ (which is a rank $n$ ECHS), we can apply a generally formulated elegant result\footnote{Note that the proof goes along the classical lines, where the ECH property was clear from degree considerations for the respective polynomials. The elegant proof in the classical case originates in \cite{CBoorPinkus}.} of Kilgore, who proved the following as Proposition 1 in \cite{Kilgore1985}.
%%%that under these assumptions the system $\{q_1,\dots,q_{n+1}\}\setminus \{q_k\}$ is linearly independent. Let us state it here for further reference.

\begin{lemma}[Kilgore]\label{l:Kilgore_orig}
Let $w_1, \dots, w_{n+1}$ be real numbers with $w_1<w_2<\dots< w_{n+1}$ and $q_1,\dots, q_{n+1}$ be functions which lie in a rank $n$ ECHS. Further, assume that $q_1,\dots, q_{n+1}$ satisfy the three conditions 1)-3) listed in Lemma \ref{l:Kilgore_assump}.
Then, for any $k\in \{1,\dots, n+1\}$, the set $\{q_1,\dots, q_{n+1}\}\setminus \{q_k\}$ forms a linearly independent set of functions.
\end{lemma}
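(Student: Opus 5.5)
The plan is the classical de Boor--Pinkus argument, carried out inside the rank $n$ ECHS containing the $q_i$. Fix $k$ and suppose, for contradiction, that $q:=\sum_{i\ne k}\gamma_i q_i\equiv 0$ with not all $\gamma_i=0$. The aim is to produce a nonzero element of the rank $n$ ECHS with at least $n$ zeros, which contradicts the Chebyshev--Haar property. To this end, evaluate the relation at the points $w_1<\dots<w_{n+1}$.

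\textbf{Step 1: reduce the relation to a two-term identity.} By condition 3), $q_i$ has no root in $[w_{i-1},w_{i+1}]$, so $q_i(w_{i-1})$, $q_i(w_i)$ and $q_i(w_{i+1})$ are all nonzero and $q_i$ has the same sign at these points. By 2), for $j\ne i-1,i$ the function $q_i$ has exactly one simple root in $[w_j,w_{j+1}]$, and by 1) this root is interior. Hence $q_i$ changes sign exactly once on each such interval. Counting sign changes, this pins down
\[
\sign q_i(w_m)=\sigma_i\,(-1)^{m}\quad\text{for } m\le i-1,
\qquad
\sign q_i(w_m)=\sigma_i\,(-1)^{m+1}\quad\text{for } m\ge i+1,
\]
where $\sigma_i$ is a fixed sign and $\sign q_i(w_i)=\sign q_i(w_{i-1})$. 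In words, relative to the alternating pattern, $q_i$ has exactly one ``defect'', located at the pair $(w_{i-1},w_{i+1})$ around $w_i$.

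\textbf{Step 2: the sign-counting argument.} Normalize $\tilde q_i:=\sigma_i(-1)^{i}q_i$, so that $(-1)^m\tilde q_i(w_m)>0$ for $m<i$ and $(-1)^m\tilde q_i(w_m)<0$ for $m>i$. Write $q=\sum_{i\ne k}\beta_i\tilde q_i\equiv0$. Consider the index sets $I_+=\{i:\beta_i>0\}$ and $I_-=\{i:\beta_i<0\}$. One then forms the partial sums
\[
g:=\sum_{i\in I_+}\beta_i\tilde q_i=-\sum_{i\in I_-}\beta_i\tilde q_i .
\]
The key is to show that $g$ has at least $n$ sign changes on the grid $\{w_m\}$, with zeros, if any, counted appropriately. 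This follows the de Boor--Pinkus ``block'' argument. Between consecutive indices of the supports, the signs of $(-1)^m g(w_m)$ are forced by whichever representation of $g$ has all its terms of the same sign at $w_m$. Concretely, at a node $w_m$ with $m$ smaller than every index in $I_+$, the first representation gives $(-1)^m g(w_m)>0$. The same kind of reasoning applied with $I_-$ gives forced signs elsewhere, and at the remaining $m$ one of the two representations decides the sign.

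\textbf{Step 3: conclude.} If the count yields $n$ sign changes (or $n$ zeros), then $g$, a nonzero element of the rank $n$ ECHS, has $n$ zeros, which is a contradiction. The case where one of $I_\pm$ is empty is easier. If, say, $I_-=\emptyset$, then $g\equiv0$ forces every $\beta_i\tilde q_i(w_1)$, or every term at $w_{n+1}$, to have a fixed strict sign. That makes their sum nonzero, unless all $\beta_i$ vanish. Excluding the index $k$ matters precisely here: with $n$ functions one can always find an endpoint node that excludes the degenerate configuration.

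\textbf{Main obstacle.} The main difficulty is the careful bookkeeping in Step 2: showing that the interleaving of $I_+$ and $I_-$ always produces $n$ sign alternations, including at nodes $w_m$ lying between support indices. If a direct count becomes messy, the fallback is induction on $n$. Remove $w_{n+1}$ together with the last function and restrict to the rank $n-1$ subspace of functions vanishing at an interior root of $q_{n+1}$, checking that conditions 1)--3) are inherited.
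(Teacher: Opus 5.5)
There is a genuine gap, and it starts in Step 1. Conditions 2) and 3) say that $q_i$ changes sign on every grid interval $[w_j,w_{j+1}]$ except $[w_{i-1},w_i]$ and $[w_i,w_{i+1}]$ (only one of these when $i\in\{1,n+1\}$). Two missing sign changes restore the parity. So the correct pattern is $\sign q_i(w_m)=\sigma_i(-1)^m$ for \emph{every} $m\ne i$, and the deviant sign sits only at the single node $w_i$. Your formula instead switches to the opposite alternation across $w_i$. That makes $q_i(w_{i-1})$ and $q_i(w_{i+1})$ of opposite sign, which contradicts condition 3), the very condition you invoked a sentence earlier.

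Because Step 2 is built on that step pattern, the sign count is never actually carried out. You only announce that $g$ ``has at least $n$ sign changes'' and defer the bookkeeping to your ``main obstacle''. Under your pattern it would not close anyway, since terms have mixed signs at nodes between support indices.

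The one-sided case in Step 3 also fails as stated. When $k\notin\{1,n+1\}$ the support may contain both $1$ and $n+1$, and $\tilde q_1$ (resp. $\tilde q_{n+1}$) has the deviant sign at $w_1$ (resp. $w_{n+1}$). Then no endpoint node has all terms of one sign. The fallback induction is not viable either: the $q_i$ with $i\le n$ do not vanish at a root of $q_{n+1}$, so they do not lie in the subspace you propose to restrict to.

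With the correct pattern the argument is short; note that the paper gives no proof here and simply quotes Kilgore's Proposition~1. Put $\tilde q_i:=\sigma_i q_i$, so that $(-1)^m\tilde q_i(w_m)>0$ for all $m\ne i$. Suppose $\sum_{i\ne k}\beta_i\tilde q_i\equiv0$ with $\beta\ne0$, and let $I_\pm$ be the index sets where $\beta_i$ is positive resp. negative.

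If $I_+$ and $I_-$ are both nonempty, let $g:=\sum_{i\in I_+}\beta_i\tilde q_i=\sum_{i\in I_-}|\beta_i|\tilde q_i$. Since $I_+\cap I_-=\emptyset$, each $m$ avoids one of the two index sets. The representation over the avoided set is a nonempty sum of terms with $(-1)^m\beta_i\tilde q_i(w_m)$ of one strict sign, so $(-1)^m g(w_m)>0$ for all $m=1,\dots,n+1$. Thus $g$ is a nonzero element of the rank $n$ ECHS with $n$ zeros, which is impossible.

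If instead, say, $I_-=\emptyset$, evaluate the relation at $w_k$, not at an endpoint. Since $k$ is not a summation index, every term $(-1)^k\beta_i\tilde q_i(w_k)$ is $\ge0$ and at least one is $>0$, a contradiction. This is the only place where deleting $q_k$ is used.
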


From here we need to arrive at linear independence of the column vectors of $C_k$. For that we need again that the functions $q_i$ generate a rank $n$ ECHS.

\begin{lemma}\label{l:keylemma} Let $q_i$ $i=1,\ldots,n+1$ belong to a rank $n$ ECHS and satisfy the three assumptions on their roots as listed in Lemma \ref{l:Kilgore_assump}.

Assume that $\sum_{i=1}^{n+1} \ai q_i(x_j)=0$ for all $j=1,\ldots,n$ whereas $\al_k=0$ for a fixed $1 \le k \le n+1$. Then we must have that $\al \equiv 0$.
\end{lemma}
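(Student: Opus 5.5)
We combine Lemma~\ref{l:Kilgore_orig} with the Chebyshev--Haar property of the rank $n$ ECHS containing the $q_i$. Suppose $\al=(\al_1,\ldots,\al_{n+1})$ satisfies $\al_k=0$ and $\sum_{i=1}^{n+1}\ai q_i(x_j)=0$ for $j=1,\ldots,n$. Put
\[
g:=\sum_{i\ne k}\ai q_i .
\]
This function lies in the rank $n$ ECHS, because each $q_i$ does and the ECHS is a vector space. By assumption $g$ vanishes at the $n$ distinct nodes $x_1<\dots<x_n$. A nonzero element of a rank $n$ ECHS has at most $n-1$ zeros counted with multiplicity, so $g\equiv\Null$ as a function.

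This turns the condition, which only constrains $g$ at the nodes, into the identity $\sum_{i\ne k}\ai q_i\equiv \Null$ on all of $\II$ (or on $\RR$, where the $q_i$ are defined as weighted polynomials in $\WW_{n-1}$). By hypothesis the $q_i$ satisfy conditions 1)--3) of Lemma~\ref{l:Kilgore_assump} with respect to points $w_1<\dots<w_{n+1}$. Lemma~\ref{l:Kilgore_orig} then says that $\{q_1,\ldots,q_{n+1}\}\setminus\{q_k\}$ is linearly independent. Hence $\ai=0$ for all $i\ne k$. Together with $\al_k=0$ this gives $\al\equiv 0$.

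The only point needing care, and the one I expect to be the obstacle, is that the hypotheses of Lemma~\ref{l:Kilgore_orig} really hold. First, the critical points must be strictly ordered, $w_1<\dots<w_{n+1}$. This should follow from the interlacing of the derivative zero sets in Corollary~\ref{cor:derivativeroots}, since $w_i$ is the zero of $P_i'$ lying in $J_i$. Second, each $q_i=(P_i)'_t/(t-w_i)$ must genuinely belong to the rank $n$ ECHS $\WW_{n-1}$. For the exponential weight, $(P_i)'_t=e^{-t}(p_i'-p_i)$, where $p_i'-p_i$ is a polynomial of degree at most $n$ vanishing at $w_i$. Dividing out the factor $t-w_i$ leaves $e^{-t}$ times a polynomial of degree at most $n-1$, so $q_i\in\WW_{n-1}$. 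Once these two points are checked, the argument is just the Haar uniqueness step followed by Kilgore's independence lemma.
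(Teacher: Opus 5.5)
Your proposal is correct and follows the paper's proof. The Haar property of the rank $n$ ECHS forces $\sum_{i\ne k}\ai q_i\equiv\Null$ from its $n$ zeros at the nodes, and Kilgore's Lemma~\ref{l:Kilgore_orig} then gives $\ai=0$ for all $i\ne k$; the two extra points you check (ordering of the $w_i$ and $q_i\in\WW_{n-1}$) are hypotheses of the statement, which the paper provides through Lemma~\ref{l:Kilgore_assump} and the discussion preceding it.
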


\begin{proof}
Since $\sum_{i=1, i\ne k}^{n+1} \ai q_i(x_j)=0$ for all $j=1,\ldots,n$, there are at least $n$ different zeros of this function. But the functions $q_i$ belong to a rank $n$ ECHS, hence we must have $\sum_{i=1,i\not= k}^{n+1} \ai q_i\equiv 0$.

From here, linear independence of the system $\{q_i\}_{i=1, i\ne k}^{n+1}$, given by Lemma \ref{l:Kilgore_orig}, guarantees that the only possibility is $\al \equiv 0$.
\end{proof}

Now we arrive at the conclusion of the section.

\begin{corollary}\label{c:nonsingularity} The matrices $C_k(\xx)$ are nonsingular for all $\xx \in S$ and $k=1,\ldots,n+1$.

For $\xx \in X$ the matrices $A_k(\xx)$ are all nonsingular, too.

For $\xx \in X^c$ only $A_{n+1}(\xx)$ is nonsingular, whereas the remaining $A_k(\xx)$ are singular for $k=1,\ldots,n$.
\end{corollary}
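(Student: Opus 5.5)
The plan is to transfer the linear independence obtained for the functions $q_i$ to the columns of $C$, and then compare $A$ with $C$ using Proposition \ref{prop:miderivative}.

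\emph{Nonsingularity of $C_k(\xx)$.} Fix $\xx\in S$ and $k\in\{1,\ldots,n+1\}$. Suppose $C_k(\xx)\boldsymbol{\beta}=\Null$ for some coefficient vector $\boldsymbol{\beta}=(\beta_i)_{i\ne k}$. By the Braess reformulation recorded before \eqref{Qmatrix}, $C_k$ arises from $Q_k$ by two operations:
\begin{itemize}
\item multiplying the rows by the nonzero numbers $w(x_j)\prod_{l\ne j}(x_j-x_l)$;
\item dividing the columns by the nonzero numbers $w(w_i)\prod_{l}(w_i-x_l)$.
\end{itemize}
These numbers are nonzero because $w_i(\xx)\notin\{x_0,\ldots,x_n\}$. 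Indeed, for $i\le n$ we have $w_i=z_i\in\intt I_i$ by Corollary \ref{c:ziininttIi}. For $i=n+1$, $w_{n+1}$ is a zero of $P_{n+1}'$ lying in $J_{n+1}$. If $w_{n+1}=x_n$, then the $(n+1)$st column of $C$ is simply $-h_j(\xx,x_n)(P_{n+1})'_t(\xx,x_j)=-\delta_{jn}\cdot 0$. This degenerate case needs separate care: I would handle it by the continuous formula $c_{j,n+1}=-h_j(\xx,w_{n+1})(P_{n+1})'_t(\xx,x_j)$, which still factors as $q_{n+1}(x_j)$ times the row and column scalars whenever $w_{n+1}\ne x_n$. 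Where $w_{n+1}=x_n$, i.e. at $\xx^*\in\partial X$, I note that $q_{n+1}(t)=(P_{n+1})'_t/(t-w_{n+1})$ remains well defined and the identity $c_{j,n+1}=\text{(scalar)}\cdot q_{n+1}(x_j)$ persists after cancelling the common factor $(x_j-w_{n+1})$ analytically.

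In either case $\sum_{i\ne k}\alpha_i q_i(x_j)=0$ for $j=1,\ldots,n$, with $\alpha_i$ a nonzero multiple of $\beta_i$ and $\alpha_k=0$. The functions $q_i$ lie in $\WW_{n-1}$, a rank $n$ ECHS, and satisfy 1)--3) by Lemma \ref{l:Kilgore_assump}. Hence Lemma \ref{l:keylemma} yields $\alpha\equiv0$, so $\boldsymbol{\beta}=\Null$ and $C_k$ is nonsingular.

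\emph{The case $\xx\in X$.} Here $z_i=w_i$ for all $i$, including $i=n+1$ by Proposition \ref{prop:equiv}. So $A(\xx)=C(\xx)$, and every $A_k$ is nonsingular.

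\emph{The case $\xx\in X^c$.} The columns $\ba_i=\cc_i$ agree for $i\le n$. By Proposition \ref{prop:Mnplus1derivative} (equivalently \eqref{mnplus1gradient}), $\ba_{n+1}=\Null$. Therefore $A_{n+1}=C_{n+1}$ is nonsingular. For $k\le n$, the matrix $A_k$ contains the zero column $\ba_{n+1}$ and is singular.

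The main obstacle is the boundary point where $w_{n+1}=x_n$. There the Braess scaling by $\prod_l(w_{n+1}-x_l)$ vanishes, so one must verify directly that the column $\cc_{n+1}$ equals a nonzero-scaled vector $(q_{n+1}(x_j))_j$. This also requires that property 1) of Lemma \ref{l:Kilgore_assump} is not destroyed there.
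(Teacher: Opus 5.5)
The step that fails is the one you flag as the ``main obstacle'', and it cannot be repaired: for $k\le n$ the first assertion is false at those points. Take $\xx^*=(\wx,\phi(\wx))\in\partial X$. By Lemma \ref{l:xstar}, $w_{n+1}(\xx^*)=x_n^*$ and $(P_{n+1})'_t(\xx^*,x_n^*)=0$. Also $h_j(\xx^*,x_n^*)=\delta_{jn}$. Hence $c_{j,n+1}=-h_j(\xx^*,w_{n+1})(P_{n+1})'_t(\xx^*,x_j)=0$ for every $j$, which is exactly your own ``$-\delta_{jn}\cdot 0$''. So $\cc_{n+1}(\xx^*)=\Null$, and every $C_k(\xx^*)$ with $k\le n$ contains a zero column and is singular.

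Cancelling the factor $(x_j-w_{n+1})$ does not rescue this. Write $c_{j,i}=\gamma_i q_i(x_j)/\rho_j$ with $\gamma_i:=w(w_i)\prod_{l=0}^n(w_i-x_l)$ and $\rho_j:=w(x_j)\prod_{l\ne j}(x_j-x_l)$. The identity does survive at $\xx^*$, but with $\gamma_{n+1}=0$. Meanwhile $(q_{n+1}(x_j))_{j=1}^n\ne\Null$: its $n$th entry is $(P_{n+1})''_{tt}(\xx^*,x_n^*)=P_{n+1}(\xx^*,x_n^*)\,(F_{n+1})''_{tt}(\xx^*,x_n^*)<0$ by Lemma \ref{l:strict_concave_place_zi}. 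So $\alpha_{n+1}$ is not a nonzero multiple of $\beta_{n+1}$. Concretely, $\beta_{n+1}=1$ with all other $\beta_i=0$ gives $C_k(\xx^*)\boldsymbol{\beta}=\Null$. Nonsingularity of $Q_k(\xx^*)$ therefore does not transfer to $C_k(\xx^*)$. This also agrees with continuity: on $X$ one has $\cc_{n+1}=\ba_{n+1}=\nabla m_{n+1}$, which tends to $\Null$ at $\partial X$ by Proposition \ref{prop:Mnplus1derivative}.

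The paper's own proof has the same hole. It invokes the Braess equivalence $C_k\sim Q_k$ for all $\xx\in S$ without noticing that the column scaling $\gamma_{n+1}$ vanishes on $\partial X$.

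What your argument, and the paper's, does establish is the following.
\begin{itemize}
\item $C_{n+1}(\xx)$ is nonsingular for every $\xx\in S$, since it only involves columns with $w_i=z_i\in\intt I_i$, where all scalings are nonzero.
\item All $C_k(\xx)$ are nonsingular whenever $w_{n+1}(\xx)\ne x_n$, i.e.\ on $X\cup\intt X^c$.
\end{itemize}
Your deductions for the $A_k$ coincide with the paper's and need only these restricted facts: $A=C$ on $X$; on $X^c$, $A_{n+1}=C_{n+1}$, and the zero column $\ba_{n+1}$ makes $A_k$ singular for $k\le n$. Those two $A_k$-assertions are all that is used later, in Theorems \ref{th:minimax} and \ref{t:homeom}. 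So the right move is to weaken the first sentence to exclude $k\le n$ on $\partial X$, rather than try to prove it there.
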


\begin{proof} Above we have explained (see suggestion of D. Braess) why through the ``magic formula'' the nonsingularity of the matrices $C_k$ is equivalent to the nonsingularity of the corresponding submatrices $Q_k$ of $Q$ in \eqref{Qmatrix}.

Dropping any column from $C$, or, equivalently from $Q$, the remaining columns belonging to $Q_k$ can have a zero linear combination only with identically zero coefficients $\ai$ ($i=1,\ldots,n+1, i\ne k$), as was seen before in Lemma \ref{l:keylemma}. That is, $Q_k$ are nonsingular matrices.

That means for $\xx\in X$ the nonsingularity of the matrices $A_k$, as for $\xx \in X$ we have $A=C$, hence $A_k=C_k$ and thus $A_k \sim Q_k$.

Also we have seen that at least for $i=1,\ldots,n$ the $z_i$ are in $\intt I_i$, the ``magic formula'' applies and $\ba_i=\cc_i$.  So, even if $\xx\in X^c$, dropping the last column from $A$, we still get $A_{n+1}=C_{n+1}$, and nonsingularity of $Q_{n+1}$ implies $D_{n+1} \ne 0$, too.

Finally, for $\xx \in X^c$ the remaining $A_k$ (i.e., if $k=1,\ldots,n$) have a zero column $\ba_{n+1}={\bf 0}$ in their last column, in view of \eqref{mnplus1gradient} from Proposition \ref{prop:miderivative}. Obviously, this entails the singularity of $A_k$ for $k=1,\ldots,n$ in this case.
\end{proof}

\section{Connectedness of $X$}\label{sec:connected}

Here we will make essential use of Proposition \ref{prop:fat} above to derive connectedness of $X$.

\begin{lemma}\label{l:Xconnected} The set $X\subset S$ is a connected open domain.
\end{lemma}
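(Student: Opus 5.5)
Openness of $X$ was already noted after Definition \ref{def:X}, since $m_{n+1}$ is continuous on $S$. So the task is to prove connectedness, and the plan is to use the description of $X^c$ from Proposition \ref{prop:fat} and Lemma \ref{l:xstar}. Together they give the explicit representation
\[
X=\{\xx=(\wx,x_n)\in S~:~ \wx\in S^{n-1},\ x_{n-1}<x_n<\phi(\wx)\},
\]
where $\phi:S^{n-1}\to\RR$ is continuous with $\phi(\wx)>x_{n-1}$. Indeed, Lemma \ref{l:xstar} shows that for fixed $\wx$ we have $(\wx,x_n)\in X$ exactly when $x_n<x_n^*=\phi(\wx)$, and that $(\wx,x_n)\in X^c$ exactly when $x_n\ge x_n^*$. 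So $X$ is the region strictly between the graph of the continuous function $\wx\mapsto x_{n-1}$ and the graph of the continuous function $\phi$, taken over the base $S^{n-1}$.

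The key steps, in order, are as follows.

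First, observe that the base $S^{n-1}=\{0<x_1<\dots<x_{n-1}\}$ is convex, hence path-connected. In the degenerate case $n=1$ the base is a point, $x_{n-1}=x_0=0$, and $X$ is simply the interval $(0,\phi)$.

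Second, construct an explicit homeomorphism
\[
\Psi: S^{n-1}\times(0,1)\to X, \qquad \Psi(\wx,s):=\bigl(\wx,\ x_{n-1}+s(\phi(\wx)-x_{n-1})\bigr).
\]
$\Psi$ is continuous because $\phi$ is continuous (Proposition \ref{prop:fat}). It is bijective onto $X$ by the representation above. Its inverse
\[
(\wx,x_n)\mapsto\Bigl(\wx,\ \frac{x_n-x_{n-1}}{\phi(\wx)-x_{n-1}}\Bigr)
\]
is continuous because the denominator is positive.

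Third, conclude that $X$ is the continuous image of the connected set $S^{n-1}\times(0,1)$, and is therefore (path-)connected. Alternatively, one can argue with paths directly. Given $\xx,\yy\in X$, first lower the last coordinate of each to $x_{n-1}+s(\phi(\wx)-x_{n-1})$ with a common small $s$; this stays in $X$ by the monotonicity part of Lemma \ref{l:xstar}. Then move $\wx$ to $\wy$ along a segment in $S^{n-1}$ while keeping the fraction $s$ fixed.

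The step that needs the most care is already done in the earlier results. It is the equality between $X$ and the strict subgraph of $\phi$. This requires both the uniqueness of $x_n^*$ and the fact that every $x_n<x_n^*$ gives a point of $X$, as established in Lemma \ref{l:xstar}, together with the continuity of $\phi$ from Proposition \ref{prop:fat}. Without continuity, the region under the graph could fail to be connected. With these results in hand, the remaining argument is routine point-set topology.
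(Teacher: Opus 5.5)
Your proof is correct and is essentially the paper's argument: both use Lemma \ref{l:xstar} and Proposition \ref{prop:fat} to see $X$ as the region strictly between the graph of $\wx\mapsto x_{n-1}$ and the graph of the continuous function $\phi$ over the convex base $S^{n-1}$. The paper joins $\xx$ and $\yy$ by a three-piece broken line with a constant vertical offset $0<\mu<\min_{0\le s\le 1}\bigl(\phi((1-s)\wx+s\wy)-(1-s)x_{n-1}-sy_{n-1}\bigr)$, which needs a compactness step along the segment; your fiberwise rescaling by a fraction $s\in(0,1)$ avoids that choice and gives the slightly stronger conclusion that $X$ is homeomorphic to $S^{n-1}\times(0,1)$.
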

\begin{proof} We already know that $X$ is open, for it is a level set $z_{n+1}(\xx)>x_n$. Note that $X$ is not just merely relatively open, for  $S$  itself is open, too.

It remains to show that $X$ is connected. Let $\xx, \yy$ be two points from $X$, and denote, for the purposes of this proof, $\wx:=(x_1,\ldots,x_{n-1})\in S^{n-1}$, and similarly, $\wy:=(y_1,\ldots,y_{n-1})\in S^{n-1}$. Let us connect the points $\wx$ and $\wy$ by a straight line segment such that $\ell(s):=(1-s)\wx+s\wy$ ($0\le s\le 1$). Then $f(s):=\phi(\ell(s))-((1-s)x_{n-1}+sy_{n-1})>0$ is a continuous, positive function according to Proposition \ref{prop:fat}.

Let $0<\mu <\min_{0\le s \le 1} f(s)$. This minimum exists, in view of continuity of the function $f$, and by positivity of the function, its minimum is also positive, hence there exists some such $\mu$, too. The choice of $\mu$ guarantees that for any $s \in [0,1]$ we have $0<\mu<f(s)$, i.e.,
\[
(1-s)x_{n-1}+sy_{n-1} < (1-s)x_{n-1}+sy_{n-1} +\mu < \phi(\ell(s)).
\]

Let us draw the straight line segment $L:=[(\wx,x_{n-1}+\mu),(\wy,y_{n-1}+\mu)]$ joining the points $(\wx,x_{n-1}+\mu)$ and $(\wy,y_{n-1}+\mu)$. Parameterising, a point is
\begin{align*}
L(s): & =((1-s)\wx+s\wy, (1-s)x_{n-1}+sy_{n-1}+\mu) = (\ell(s), (1-s)x_{n-1}+sy_{n-1}+\mu).
\end{align*}
According to the above, we have that $L\subset S$ (the last coordinate staying above the last but one for all $s \in [0,1]$), whereas $(1-s)x_{n-1}+sy_{n-1}+\mu < \phi(\ell(s))$ ensures that $L$ runs below $(\ell(s),\phi(\ell(s)))$, the boundary of $X^c$. Therefore, $L \subset X$ in full.

Consider now the three straight line segments $[(\wx,x_n),(\wx,x_{n-1}+\mu)]$, $[(\wy,y_{n-1}+\mu), (\wy,y_n)]$ and $L$.

The first two segments lie below the points $(\wx,x_n^*)$ and $(\wy,y_n^*)$, respectively, because $x_{n-1}+\mu <x_n^*$ and $x_n <x_n^*$, and $y_{n-1}+\mu< y_n^*$ and  $y_n <y_n^*$, respectively. As for $L$, we have seen that it lies below the curve $(\ell(s),\phi(\ell(s)))$ ($0 \le s \le 1$). So, all three segments are below the curve $(\ell(s),\phi(\ell(s)))$ ($0 \le s \le 1$). Thus, the three segments form a broken line connecting $\xx$ and $\yy$, and lying entirely below the graph of $\phi(\ell(s))$, whereas exceeding $(1-s)x_{n-1}+sy_{n-1}$, so that all the points on this broken line are points from $S$ and even from $X$.

So, the arbitrary points $\xx, \yy$ of $X$ are connected by a broken line within $X$, which yields connectedness of $X$.

\end{proof}

\section{Properness of the difference mapping $\Gamma(\xx)$}\label{sec:proper}

Following \cite{CBoorPinkus} we analyse the ``difference mapping'' $\Gamma\colon S\to \mathbb{R}^{n}$ defined by
\begin{equation}\label{gamma}
\Gamma(\xx):=(m_2(\xx)-m_1(\xx), \dots, m_{n+1}(\xx)-m_{n}(\xx)).
\end{equation}
%%%%Following de Boor and Pinkus, who did the same in the classical case, here we aim at proving that $\Gamma$ is a homeomorphism between $S$ and $\RR^n$. To achieve this, we are going to apply an extension of a result of Shi from \cite{Shi}. Besides the connectedness property of $X$, the properness of $\Gamma$ needs to be seen, as well.

Recall that a continuous map $f: X\to Y$ between topological spaces is called \emph{proper}, if $f^{-1}(Q)(\subset X)$ is compact for any compact subset of $Y$.

\begin{lemma}\label{l:gamma_proper_new}
Let $w$ be a continuous, positive weight function on $[0,\infty)$ with $w(t) t^k \to 0$ ($t\to \infty$) for all $k$ (or at least for $k=n$). Then the mapping $\Gamma\colon S\to \mathbb{R}^{n}$, defined by \eqref{gamma}, is proper.
\end{lemma}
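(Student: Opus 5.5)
Properness of $\Gamma$ means: if $\xx^{(k)}\in S$ leaves every compact subset of $S$, then $\|\Gamma(\xx^{(k)})\|\to\infty$. The plan is to argue by contradiction. Take a sequence $\xx^{(k)}$ with $\Gamma(\xx^{(k)})$ bounded, and show that it has a subsequence converging in $S$; continuity of $\Gamma$ then gives closedness of preimages of compacts. Bounded $\Gamma$ means that all differences $m_{i+1}-m_i$ are bounded. Since $m_{n+1}\ge 1$ always, and each $m_i\ge 1$, it is enough to find a single interval maximum that blows up while another stays bounded. Leaving compacts of $S$ can happen in three ways, up to passing to a subsequence:
\begin{itemize}
\item[(a)] two consecutive nodes coalesce, $x_i^{(k)}-x_{i-1}^{(k)}\to 0$ for some $1\le i\le n$ (this includes $x_1\to 0=x_0$);
\item[(b)] $x_n^{(k)}\to\infty$ while some gap stays bounded;
\item[(c)] several nodes escape to infinity together.
\end{itemize}

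\textbf{Step 1 (a bounded interval maximum).} After passing to a subsequence, I would split the nodes into clusters according to whether consecutive gaps tend to $0$, tend to $\infty$, or converge to a positive finite limit. I want an index $i_0$ with $m_{i_0}(\xx^{(k)})$ bounded. The natural candidate is an interval $I_{i_0}$ of maximal length, or one whose length stays comparable to the scale of its neighbours. On such an interval, $P_{i_0}$ can be estimated through the Lebesgue function of a normalized configuration. Rather than chase this directly, I would use the bound $m_i\le \|L_n\|$ and instead aim for a statement of the form ``$\min_i m_i$ is bounded only in a controlled way''. Actually it suffices to show that $\max_i m_i\to\infty$, because the differences are bounded and $\min_i m_i\ge 1$ is finite. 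Bounded differences then force every $m_i\to\infty$, in particular $m_{n+1}\to\infty$. So the real goal is to show: $\max_i m_i\to\infty$, together with one fixed $m_{i}$ staying bounded, gives the contradiction.

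\textbf{Step 2 ($\|L_n\|\to\infty$).} In case (a), suppose $x_{i-1},x_i$ coalesce at a finite point $\xi$. Choose a node $x_j$ at bounded positive distance from $\xi$; at least one such node exists, or else use $x_0=0$. Then at a point $t$ at fixed distance from the cluster, $|h_{i}(t)|$ contains the factor $1/(x_i-x_{i-1})\to\infty$, while the remaining factors stay bounded away from $0$ and $\infty$ and the weight $w$ is evaluated at bounded arguments. Hence $L_n\to\infty$ there. In cases (b) and (c) with $x_n\to\infty$, compare with the ordinary maximum norm: $h_n(t)=\frac{w(t)}{w(x_n)}\ell_n(t)$, and $1/w(x_n)\ge$ any power of $x_n$ by the decay hypothesis. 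Evaluating at a fixed $t$ in a bounded interval $I_j$, or at $t$ near $x_{n}/2$ if all nodes run away, the factor $|\ell_n(t)|\ge c\,x_n^{-n}$ while $1/w(x_n)\gg x_n^{n}$. This gives $h_n(t)\to\infty$. This is exactly where the assumption $w(t)t^n\to 0$ enters.

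\textbf{Step 3 (some $m_i$ stays bounded).} This is the step I expect to be the main obstacle. We need one interval whose maximum does not blow up, for instance $m_{n+1}$. Here the hypothesis gives $P_{n+1}(t)\le \sum_k |h_k(t)|$ with $t\ge x_n$, and one must show it does not explode; in the escape case this is not obvious. My first attempt would be the opposite route: use the bounded differences to get $m_{n+1}-m_1$ bounded, and produce an index where $m_i$ is provably bounded by a local argument. The best choice is an interval of length comparable to the diameter of its neighbourhood cluster. There, after affine rescaling of that cluster, the $h_k$ of nodes far away contribute small amounts, by ratio estimates of products. The weight ratio $w(t)/w(x_k)$ is handled by restricting to $t$ in a bounded range, possibly exploiting log-concavity.

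If this local boundedness proves too delicate, I would fall back on an alternative. Show directly that the coalescing or escaping interval's own maximum blows up, while $m_1$ (on $[0,x_1]$) remains bounded whenever $x_1$ stays away from $0$ and does not escape. I would then treat the remaining cases by induction on the position of the degeneration. In all cases the contradiction with boundedness of $\Gamma(\xx^{(k)})$ yields compactness of $\Gamma^{-1}(Q)$.
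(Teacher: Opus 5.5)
Your proposal has a genuine gap, and it is where you anticipated: Step 3 is never carried out, and the argument does not close without it. Step 2 only gives $\max_i m_i(\xx^{(k)})\to\infty$. That is compatible with bounded differences, because all $m_i$ may tend to infinity together. So the remark in Step 1 that it ``suffices to show that $\max_i m_i\to\infty$'' is false, and everything depends on exhibiting an index whose maximum stays bounded. The candidates you propose do not work in general:
\begin{itemize}
\item If $x_1$ is fixed and $x_2-x_1\to 0$ (other nodes fixed), then $|h_1(\xx,x_1/2)|$ contains the factor $|x_1/2-x_2|/|x_1-x_2|\to\infty$. Hence $m_1\to\infty$, although $x_1$ neither approaches $0$ nor escapes, which contradicts your fallback.
\item If (for $w(t)=e^{-t}$) $x_{n-1},x_n\to\infty$ with $\eta:=x_n-x_{n-1}\to 0$, then $h_n(\xx,x_n+1)\ge e^{-1}(1+\eta)/\eta$, so $m_{n+1}$ blows up as well.
\end{itemize}
Which interval stays bounded depends on the whole multi-scale cluster structure of the degeneration. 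You give no argument that such an interval always exists.

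The missing idea, which makes Step 3 unnecessary, is to work with ratios rather than absolute bounds. Since every $m_i\ge 1$, a bound $|m_{l+1}-m_l|\le C$ gives $m_k/m_i\le 1+|m_k-m_i|\le 1+nC$ for all $i,k$. So it suffices to make some ratio $m_k/m_i$ blow up.

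Ratios can be controlled termwise, because
$$|h_j(\xx,s)|/|h_j(\xx,z)|=\frac{w(s)}{w(z)}\prod_{l\ne j}\frac{|s-x_l|}{|z-x_l|}$$
contains none of the troublesome denominators $x_j-x_l$. If this ratio is $\ge R$ for all $j$, then $P_k(\xx,s)\ge R\,P_i(\xx,z)$. Take $s\in[0,1]$ at distance at least $1/(2n+1)$ from every node, and $k$ with $s\in I_k$. Two cases arise:
\begin{itemize}
\item If $x_n\to\infty$, put $z=z_{n+1}\ge x_n$. Since $|z-x_l|\le z$, one gets $R\ge \min_{[0,1]}w\big/\bigl((2n+1)^n w(z)z^n\bigr)\to\infty$; this is where $w(t)t^n\to 0$ enters. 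Hence $m_k/m_{n+1}\to\infty$.
\item If the nodes stay bounded and $x_i-x_{i-1}\to 0$, put $z=z_i\in I_i$. For each $j$, some $l\in\{i-1,i\}\setminus\{j\}$ has $|z-x_l|\le x_i-x_{i-1}$. So $R\gtrsim 1/(x_i-x_{i-1})$ and $m_k/m_i\to\infty$.
\end{itemize}
After passing to a subsequence, these two cases exhaust all ways of leaving compact subsets of $S$; your cases (b) and (c) both fall under $x_n\to\infty$. This is exactly the paper's two-step proof, and no bounded interval maximum ever needs to be found.
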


\begin{proof} As $m_i(\xx)$ is continuous on $S$ for every $i=1,\dots,n+1$, the continuity of $\Gamma$ is obvious.

Let $Q$ be a compact subset of $\RR^{n}$ and let $W$ denote the preimage of $Q$ under $\Gamma$, i.e., $W:=\Gamma^{-1}(Q)$. By continuity, $W \subset S$ is a (relatively) closed set in $S$.

In the first step we show, that the boundedness of $Q$ implies that $W$ is also bounded\footnote{The whole proof is a mere technicality with the classic paper \cite{CBoorPinkus} essentially having it all. However, in that case there was no boundedness question, given that the base interval $\II$ was taken finite, so that formally we need a proof here. The paper \cite{SzAMH} works this out in a little more complicated case -- here the proof is a little more direct, but the reader can as well accept the assertion without details of proof as it is very similar to \cite{SzAMH}, Theorem 2.}. For definiteness, assume that the coordinates of points from $Q$ all admit the bound $C$, say. If any $\xx \in S$ belongs to $W$, and is thus mapped to a point of $Q$, then for all applicable index $i$ we have $|m_i(\xx)-m_{i-1}(\xx)|\le C$, hence $\max\limits_{i=1,\ldots,n} m_i(\xx) - m_{n+1}(\xx) \le nC$. Given that $m_{n+1}(\xx) \ge 1$, this also means that for all $i=1,\ldots,n$ we must have
\[
\frac{m_i(\xx)}{m_{n+1}(\xx)} \le nC+1.
\]
Let us fix some large number $q>1$, and assume that $\xx \in S$ is such that $x_n>q$. We will
show that for large enough choice of $q$ this point cannot belong to $W$. For sure, there exists a point $s \in [0,1]$ such that $|s-x_i|\ge 1/(2n+1)$ for all index $i$. This point $s \in [0,1]$ belongs to some of the $I_i(\xx)$, but not to $I_{n+1}(\xx)$, for $s \le 1 <q <x_n$. Take the index $i$ with $s \in I_i(\xx)$ and let $j$ be any index with $0\le j \le n$. Then it holds
\begin{align*}
\left|\frac{h_j(\xx,z_{n+1})}{h_j(\xx,s)}\right|&=\frac{w(z_{n+1})}{w(s)} \prod\limits_{l=0,l\not=j}^n \left|\frac{z_{n+1}-x_l}{s-x_l}\right|
\le \frac{w(z_{n+1})}{\min_{[0,1]} w} \frac{(z_{n+1})^n}{1/\left(2n+1\right)^{n}}
= \frac{(2n+1)^n}{\min_{[0,1]} w} w(z_{n+1}) z_{n+1}^n.
\end{align*}
Recall that $z_{n+1}\ge x_n>q$, so if $q$ is chosen large enough, then by $w(t)t^n \to 0$ we are led to
$$
\left|\frac{h_j(\xx,z_{n+1})}{h_j(\xx,s)}\right| <\ve \qquad (j=0,\ldots,n).
$$
Therefore we get
\begin{align*}
m_i(\xx) & \ge P_i(\xx,s)=\sum\limits_{j=0}^n |h_j(\xx,s)|
\ge \frac{1}{\varepsilon}\sum\limits_{j=0}^n |h_j(\xx,z_{n+1})|=\frac{1}{\varepsilon} P_{n+1}(\xx,z_{n+1})=\frac{1}{\varepsilon}m_{n+1}(\xx).
\end{align*}
Clearly, if $\ve <\frac{1}{nC+1}$, then this means $\xx \notin W$.

\medskip
So $W$ is a bounded and relatively closed set; in particular, there is a constant $c$ such that $\|\xx\|\le c$ for all points $\xx \in W$. It remains to see that $W$ is closed in $\RR^n$, too. In other words, we have to show that if a point $\xx \in \partial S$ satisfies $\|\xx\|\le c$, then it cannot be a limit point\footnote{If we have this, then closing $W$ in $\RR^n$ results in $\overline{W} \subset \oS$, but $\partial S \cap \overline{W}=\emptyset$ so that $\overline{W} \subset S$, and $W$ being relatively closed, $W=\overline{W} \cap S = \overline{W}$ proves that $W$ is closed even in $\RR^n$.}  of $W$.

Now  let $\xx \in \DS$ such that $x_{i-1}=x_i$, say. Also let $\de>0$ be small, and assume that $\yy \in S$ is $\de$-close to $\xx$, i.e., $\|\xx-\yy\|<\de$. We take $z_i \in I_i(\yy)$ the maximum point of $P_i(\yy,\cdot)$ on $I_i(\yy)$, and take any other point $s \in [0,1]$ with the property that it is at least $1/(2n+1)$ distance far from any other point of the node system. With the unique index $k$ with $s \in I_k$ we obviously have
\[
m_k(\yy) \ge P_k(\yy,s) = \sum_{0}^n |h_j(\yy,s)|.
\]
The choice of $s$ leads to
\begin{align*}
\frac{|h_j(\yy,s)|}{|h_j(\yy,z_i)|} & = \frac{w(s)}{w(z_i)} \prod_{l=0, l \ne j}^n \left| \frac{s-y_l}{z_i-y_l}\right|
\ge \frac{\min_{[0,1]} w}{\max_{[0,c+\de]} w} \left( \frac{1}{2n+1} \right)^n (c+\de)^{-n+1} \frac{1}{2 \de},
\end{align*}
for the point $z_i$ is in the interval $I_i(\yy)=[y_{i-1},y_i]$ of length at most $2\de$, and at least one of the indices $l=i-1$ or $l=i$ will occur in the product for $l\ne j$, leading to $|z_i-y_l|< 2\de$. So, the above ratio will stay above $C_1/\de$ with some (possibly large) constant $C_1$. Therefore we find
\[
m_k(\yy) \ge \sum_{j=0}^n |h_j(\yy,s)| \ge \frac{C_1}{\de} \sum_{j=0}^n |h_j(\yy,z_i)| = \frac{C_1}{\de} m_i(\yy).
\]
As before, we can make use of the trivial lower estimate $m_i(\yy)\ge 1$ to infer
\begin{align*}
\max\limits_{l=2,\ldots,n+1} |m_l(\yy)-m_{l-1}(\yy)| & \ge \frac{1}{n} |m_k(\yy)-m_i(\yy)|
\ge \frac{1}{n} \frac{|m_k(\yy)-m_i(\yy)|}{m_i(\yy)}
\ge \frac{1}{n} \left( \frac{C_1}{\de} -1 \right).
\end{align*}
This means that for small $\de$ we will have $\| \Gamma(\yy)\| \ge \frac{1}{n} \left( \frac{C_1}{\de} -1 \right) >C$, whereas $Q$ was bounded by $C$, so that $\Gamma(\yy) \notin Q$. That is, $\xx$ cannot be a limit point of $W$. The proof is finished.
\end{proof}

\begin{corollary}\label{c:proper_m}
The mapping $\mm: S\to \RR^{n+1}$ defined by
\[
\mm(\xx)=(m_1(\xx),\dots,m_{n+1}(\xx)),
\]
is a proper mapping from $S$ to $\RR^{n+1}$.
\end{corollary}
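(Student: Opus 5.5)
The plan is to deduce the corollary directly from Lemma \ref{l:gamma_proper_new}. The only extra ingredient is control of the size of $m_{n+1}$, or of any single $m_i$, once the differences are bounded. Continuity of $\mm$ is already known, since each $m_i$ is continuous on $S$ (indeed continuously differentiable by Proposition \ref{prop:miderivative}). So we fix a compact $K\subset\RR^{n+1}$ and aim to show that $\mm^{-1}(K)$ is compact.

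First, we note that $\Gamma=T\circ\mm$, where $T:\RR^{n+1}\to\RR^n$ is the linear map $T(u_1,\dots,u_{n+1}):=(u_2-u_1,\dots,u_{n+1}-u_n)$. Since $T$ is continuous, $T(K)$ is compact in $\RR^n$. Clearly $\mm^{-1}(K)\subset \mm^{-1}(T^{-1}(T(K)))=\Gamma^{-1}(T(K))$. By Lemma \ref{l:gamma_proper_new}, applicable because $w(t)=e^{-t}$ is continuous, positive and satisfies $w(t)t^n\to0$, the set $\Gamma^{-1}(T(K))$ is compact. Hence $\mm^{-1}(K)$ is contained in a compact subset of $S$.

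Second, $\mm^{-1}(K)$ is closed in $S$ by continuity of $\mm$. A relatively closed subset of $S$ that lies inside a compact subset $W\subset S$ is closed in $W$, and therefore compact. This gives properness.

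There is no real obstacle here. The only point needing care is that the compactness delivered by Lemma \ref{l:gamma_proper_new} is compactness in $S$ itself, so that no limit points escape to $\partial S$ or to infinity. That lemma established exactly this: closedness even in $\RR^n$, together with boundedness. Consequently the relative closedness of $\mm^{-1}(K)$ in $S$ suffices.
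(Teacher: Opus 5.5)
Your proposal is correct and follows essentially the same argument as the paper. You write $\Gamma$ as the linear difference map composed with $\mm$, observe that $\mm^{-1}(K)$ lies in the compact set $\Gamma^{-1}(T(K))$ by Lemma \ref{l:gamma_proper_new}, and conclude from relative closedness. Your handling of closedness (relatively closed in $S$ and contained in a compact subset of $S$) is in fact slightly more careful than the paper's phrasing that $\mm^{-1}(K)$ is ``closed (in $\RR^n$)'' by continuity.
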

\begin{proof}
The proper map $\Gamma$ is a composition of the continuous mappings $\mm: S\to \RR^{n+1}$ and the simple ``difference mapping'' $d: \RR^{n+1}\to \RR^{n}$ given by
\[
d(\mu_1,\dots,\mu_{n+1}) := (\mu_2-\mu_1,\dots,\mu_{n+1}-\mu_n).
\]
That is, $\Gamma(\xx)=(d\circ \mm)(\xx)$.

Let now $Q \subset \RR^{n+1}$ be any compact set, and denote $W:=\mm^{-1}(Q)$. As $d$ is continuous, $Z:=d(Q) \subset \RR^n$ is compact, too.

Given that $\Gamma$ is a proper mapping, this means that $T:=\Gamma^{-1}(Z)$ is also a compact subset in $S$. Note that $W \subset T$ by construction.

By continuity of $\mm$, we also have $W$ closed (in $\RR^n$). Therefore, $W=W \cap T$ is compact, and it is a subset of $S$, as said.

That proves properness of $\mm$.
\end{proof}

\section{The Minimax Equioscillation Theorem}\label{sec:mnimax}

We continue our progress recalling Corollary \ref{c:nonsingularity} for the next steps.
We set out to the minimax problem of finding
\begin{align*}
\mu(w,S)& :=\inf_{\xx \in S} \overline{m}(\xx):=\inf_{\xx \in S} \max_{i=1,\ldots,n+1} m_i(\xx)
= \inf_{\xx \in S} \sup_\II L_n(\xx,\cdot) = \inf_{\xx \in S} \|L_n(w,\xx,\cdot)\|.
\end{align*}

\begin{theorem}\label{th:minimax} Consider the exponential weight $w(t):=\exp(-t)$, and the weighted Lagrange interpolation in the space $\WW_n$.

There exists an extremal minimax node system $\yy \in S$ such that $\|L_n(w,\yy,\cdot)\| \le \|L_n(w,\xx,\cdot)\|$ for any other node system $\xx \in S$.

 Moreover, any such node system belongs to $X$ and equioscillates: $m_1(\yy)=\dots=m_{n+1}(\yy)$.
\end{theorem}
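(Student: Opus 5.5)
Existence comes first, from properness. By Corollary \ref{c:proper_m}, $\mm$ is proper. Since $\overline{m}(\xx)=\max_i m_i(\xx)$ is continuous, the sublevel set $\{\xx\in S:\overline{m}(\xx)\le \overline{m}(\xx_0)\}$ for any fixed $\xx_0\in S$ is the preimage under $\mm$ of the compact set $[1,\overline{m}(\xx_0)]^{n+1}$. (Recall $m_i\ge 1$ always.) This set is therefore a compact subset of $S$, and $\overline{m}$ attains its infimum on it at some $\yy\in S$.

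Next, every minimax point equioscillates, and the argument rests on the nonsingularity results (Corollary \ref{c:nonsingularity}). Suppose $\yy$ is extremal, and let $E:=\{i: m_i(\yy)=\overline{m}(\yy)\}\subsetneq\{1,\dots,n+1\}$. I would split the argument according to whether $\yy\in X$ or $\yy\in X^c$.

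\emph{Case $\yy\in X$.} Pick $k\notin E$. Then $A_k(\yy)$ is nonsingular, so the gradients $\nabla m_i(\yy)$, $i\ne k$, are linearly independent. Hence there is a direction $\vv$ with $\nabla m_i(\yy)\cdot\vv=-1$ for all $i\ne k$. Moving to $\yy+\tau\vv$ with small $\tau>0$ strictly decreases every $m_i$ with $i\in E$, while by continuity the $m_i$ with $i\notin E$ stay below $\overline{m}(\yy)$. This contradicts minimality. Hence $E$ is everything, i.e., equioscillation holds.

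\emph{Case $\yy\in X^c$.} Here I must show that an extremal point cannot lie in $X^c$ at all, and I expect this to be the main obstacle, because $A_k$ is singular there for $k\le n$. On $X^c$ we have $m_{n+1}(\yy)=1\le m_i(\yy)$ for all $i$. If $\overline{m}(\yy)=1$, then all $m_i=1$; this is impossible since $m_i>1$ for $i\le n$ (the interior maxima satisfy $P_i>1$ on $\intt I_i$). So $\overline{m}(\yy)>1=m_{n+1}(\yy)$, hence $n+1\notin E$. Now $A_{n+1}(\yy)$ is nonsingular by Corollary \ref{c:nonsingularity}. Choosing $\vv$ with $\nabla m_i(\yy)\cdot\vv=-1$ for $i=1,\dots,n$ decreases all of $m_1,\dots,m_n$ to first order. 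Meanwhile $m_{n+1}$ changes continuously from the value $1<\overline{m}(\yy)$, so for small $\tau$ it stays below the decreased maximum. This strictly decreases $\overline{m}$, a contradiction. Alternatively, Lemma \ref{l:partialmixn} could be used directly: decreasing $x_n$ alone lowers all $m_i$, $i\le n$, while $m_{n+1}$ remains near $1$. Either route shows $\yy\notin X^c$, so $\yy\in X$, and the previous case gives equioscillation.

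The point needing care is that the first-order decrease argument requires continuous differentiability of all $m_i$ at $\yy$, including $m_{n+1}$ across $\partial X$. This is provided by Proposition \ref{prop:miderivative} and Proposition \ref{prop:Mnplus1derivative}. For $n+1\notin E$ we in fact only need continuity of $m_{n+1}$.
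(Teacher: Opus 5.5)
Your proof is correct and follows the paper's own route. Existence comes from properness of $\mm$ (Corollary \ref{c:proper_m}); $X^c$ is excluded using $m_{n+1}=1<m_i$ for $i\le n$ together with $D_{n+1}\neq 0$; and equioscillation on $X$ follows from $D_k\neq 0$ for all $k$. The differences are cosmetic: you perturb along a first-order direction $\vv$ with $\nabla m_i\cdot\vv=-1$ instead of invoking the local inverse of $\xx\mapsto(m_i(\xx))_{i\ne k}$. Your alternative via Lemma \ref{l:partialmixn} (decreasing $x_n$ alone) is also valid, and it avoids needing $D_{n+1}\neq 0$ on $X^c$.
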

\begin{proof} The first part--existence of a minimax node system in $S$--follows from continuity and properness of the interval maxima mapping $\mm$, see Corollary \ref{c:proper_m}.

First we claim that any minimax node system necessarily belongs to $X$. In other words, we are to prove that a point $\yy \in X^c$ cannot be a minimax point system. Indeed, if $\yy \in X^c$, then $m_{n+1}(\yy)=1$, while all other interval maxima $m_i(\yy)>1$, ($i=1,\ldots,n$), as it was shown using strict log-concavity, see Lemma \ref{l:strict_concave_place_zi}. Therefore, there exists an $\ve>0$ such that all $m_i(\yy)>1+\ve$. It follows from continuity of the $m_i$ that there exists $\de>0$ such that for any $\xx \in S$ with $\|\xx-\yy\|<\de$ we have %% $m_i(\xx)>1+\ve/2$ ($i=1,\ldots,n$) and
$m_{n+1}(\xx)<1+\ve/2$. Furthermore, $D_{n+1}(\yy)\ne 0$ -- that is, the non-singularity of $A_{n+1}(\yy)$ -- entails that in a sufficiently small neighborhood of $\yy$ the mapping $\xx \to (m_1(\xx),\ldots,m_n(\xx))$ acts as a homeomorphism. So, prescribing the values $\mu_i < m_i(\yy)$ ($i=1,\ldots,n$) sufficiently close to the original values of $m_i(\yy)$, there is a node system $\uu \in S$, close to $\yy$ (in particular closer than $\de$) attaining these prescribed values, i.e., with $m_i(\uu)=\mu_i$ ($i=1,\ldots,n$). As $\| \uu-\yy\|<\de$, the value $m_{n+1}(\uu)$ also satisfies  $m_{n+1}(\uu)<m_{n+1}(\yy)+\ve/2 <  \|L_n(w,\yy,\cdot)\|$. It follows that  $\|L_n(w,\uu,\cdot)\| <  \|L_n(w,\yy,\cdot)\|$, so that $\yy$ cannot be a minimax point in $S$.

Thus we find that if $\yy$ is a minimax node system, then $\yy \in X$. However, on $X$ all $D_k\ne 0$. It follows that whenever $\xx\in X$ and $m_k(\xx)< \|L_n(w,\xx,\cdot)\|$ for some index $k$, then we can apply a similar perturbation argument as before, decreasing  $\|L_n(w,\xx,\cdot)\|$ and therefore showing that $\xx$ was not an extremal (minimax) node system. So, for a minimax point $\yy$ it remains the only possibility that $m_i(\yy)$ are all equal, i.e, $\yy$ must be an equioscillating node system.
\end{proof}

We note, that Shi in Lemma 2 of \cite{Shi} proved a statement similar\footnote{In Shi's work \cite{Shi}, $X$ stands for the open simplex built on a finite interval, here denoted by $S$. Our observation was that the result extends to any open domain. In the following we also needed the finding that our $X$ is not only open, but is also connected, i.e., a domain.} to the previous one. He proved that the existence of a minimax node system guarantees the equioscillating property of it as well if $D_k(\xx)\ne 0$ holds for every $\xx\in X$ and $k=1,\cdots,n+1$.

As the proof of our crucial Theorem \ref{t:homeom}, coming next, applies the key steps of Shi's proof, we present the Lemma of Shi together with its proof. Let us start recalling a basic lemma of linear programming.

\begin{lemma}[\cite{Avriel}, Theorem 3.4]\label{th:LP_Avriel}
Let $g, g_1,\cdots, g_m$ be continuously differentiable on an open subset $Y\subset \RR^{n}$. If $\yy\in Y$ is a solution of the problem to
\[
\min g(\xx), \quad \textrm{ subject to } \xx\in Y \quad \textrm{ and } \quad g_i(\xx)\ge 0, \quad i=1, \dots m,
\]
then there exists a vector $\lambda=(\lambda_0, \cdots, \lambda_m)\not={\bf 0}$, $\lambda \ge {\bf 0}$, such that
\begin{equation}\label{lp:eq1}
\lambda_0 \nabla g(\yy)-\sum\limits_{i=1}^m \lambda_i \nabla g_i(\yy)=0,
\end{equation}
and
\begin{equation}\label{lp:eq2}
\lambda_ig_i(\yy)=0, \quad i=1,\dots,m.
\end{equation}

\end{lemma}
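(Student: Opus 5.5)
The statement is the Fritz John necessary condition for an inequality-constrained minimum, quoted from \cite{Avriel}. I will prove it by a separation (Gordan/Motzkin alternative) argument; nothing specific to interpolation is needed, only the continuous differentiability of $g,g_1,\dots,g_m$ at the interior point $\yy$ of the open set $Y$.

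\emph{Step 1: reduce to the active constraints.} Let $\mathcal A:=\{i : g_i(\yy)=0\}$. For $i\notin\mathcal A$ I set $\lambda_i=0$, so \eqref{lp:eq2} holds automatically. Since $g_i(\yy)>0$ for $i\notin\mathcal A$, continuity keeps these constraints satisfied near $\yy$, and they can be ignored locally.

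\emph{Step 2: no strict descent direction is feasible.} I claim there is no $\dd\in\RR^n$ with
\[
\nabla g(\yy)\cdot \dd<0 \quad\text{and}\quad \nabla g_i(\yy)\cdot \dd>0 \qquad (i\in\mathcal A).
\]
Suppose such a $\dd$ existed and consider $\xx=\yy+\tau\dd$ for small $\tau>0$. By differentiability, for $i\in\mathcal A$,
\[
g_i(\yy+\tau\dd)=\tau\,\nabla g_i(\yy)\cdot\dd+o(\tau)>0 .
\]
The inactive constraints stay positive by Step 1, and $\yy+\tau\dd\in Y$ because $Y$ is open. So the point is feasible. On the other hand,
\[
g(\yy+\tau\dd)=g(\yy)+\tau\,\nabla g(\yy)\cdot\dd+o(\tau)<g(\yy),
\]
which contradicts the minimality of $\yy$. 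Here I read "solution" as a local minimum, which suffices.

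\emph{Step 3: apply Gordan's alternative.} Consider the finite family of vectors $\nabla g(\yy)$ and $-\nabla g_i(\yy)$ for $i\in\mathcal A$. Step 2 says there is no $\dd$ with $\vv\cdot\dd<0$ for every vector $\vv$ of this family. Equivalently, $\mathbf 0$ lies in their convex hull. I would prove this by separation: the convex hull is compact and convex, so if it missed $\mathbf 0$, the strict separating hyperplane would supply exactly such a $\dd$. Hence there are nonnegative coefficients, not all zero, namely $\lambda_0$ and $\lambda_i$ for $i\in\mathcal A$, with
\[
\lambda_0\nabla g(\yy)-\sum_{i\in\mathcal A}\lambda_i\nabla g_i(\yy)=\mathbf 0 .
\]
Together with $\lambda_i=0$ for $i\notin\mathcal A$, this gives \eqref{lp:eq1} with $\lambda\ge\mathbf 0$ and $\lambda\ne\mathbf 0$. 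Complementary slackness \eqref{lp:eq2} holds because either $g_i(\yy)=0$ or $\lambda_i=0$.

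The only delicate point is Step 3, the theorem of the alternative. Compactness of the convex hull of finitely many points makes the strict separation routine. The degenerate case $\mathcal A=\emptyset$ needs no separate treatment: the family is then just $\{\nabla g(\yy)\}$, and the argument gives $\nabla g(\yy)=\mathbf 0$ with $\lambda_0>0$.
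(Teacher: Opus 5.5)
Your proof is correct. It is the standard Fritz John argument: first show there is no direction that strictly decreases $g$ while strictly increasing every active constraint, then apply Gordan's alternative by separating $\mathbf 0$ from the compact convex hull of $\nabla g(\yy)$ and the $-\nabla g_i(\yy)$, $i$ active. The paper gives no proof of its own and simply quotes \cite{Avriel}, where the result is the textbook Fritz John theorem. Reading ``solution'' as a local minimum is harmless, and it in fact matches how the paper applies the lemma to (locally) minimax node systems in Lemma \ref{Shi_lemma2}.
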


\begin{lemma}[\cite{Shi}, Lemma 2]\label{Shi_lemma2}
Let $Y \subset S$ be any open subset of $S$. Assume that $D_k(\xx)\not=0$ for every $\xx\in Y$
and $k=1,\cdots,n+1$.

Then, if $\yy\in Y$ is a (locally) minimax node system, then it must be equioscillating.

Moreover, the signs of the nonzero Jacobian determinants  satisfy $(-1)^{k-i}D_i(\yy)D_k(\yy)>0$.
%%% are of alternating signs at $\yy$: $D_i(\yy) D_{i+1}(\yy)<0$ ($i=1,\ldots,n$).
\end{lemma}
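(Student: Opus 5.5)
The plan is to apply Lemma~\ref{th:LP_Avriel} to an epigraph reformulation of the minimax problem, and then to use the nonsingularity of the $D_k$ twice: first to force all multipliers to be positive, and second to read off their signs through Cramer's rule. Let $\yy\in Y$ be a local minimax point with value $\mu:=\overline m(\yy)$. On a small open ball $Y'\subset Y$ around $\yy$, in the variables $(\xx,\tau)\in Y'\times\RR$, consider
\[
\min \tau \quad\text{subject to}\quad g_i(\xx,\tau):=\tau-m_i(\xx)\ge 0,\quad i=1,\ldots,n+1 .
\]
The point $(\yy,\mu)$ solves this locally, and all functions involved are $C^1$ by Proposition~\ref{prop:miderivative}. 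Lemma~\ref{th:LP_Avriel} therefore gives $\lambda=(\lambda_0,\ldots,\lambda_{n+1})\ge\Null$ with $\lambda\ne\Null$ and the following conditions. The $\tau$-component of \eqref{lp:eq1} gives $\lambda_0=\sum_{i\ge1}\lambda_i$. The $\xx$-components give $\sum_{i=1}^{n+1}\lambda_i\nabla m_i(\yy)=\Null$, that is, $A(\yy)\lambda'=\Null$ with $\lambda'=(\lambda_1,\ldots,\lambda_{n+1})$. Complementary slackness \eqref{lp:eq2} gives $\lambda_i=0$ whenever $m_i(\yy)<\mu$.

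Next I show that every $\lambda_i$ with $i\ge1$ is positive, which yields equioscillation. Since $\lambda\ne\Null$ and $\lambda_0=\sum_{i\ge 1}\lambda_i$, the vector $\lambda'$ is nonzero. Suppose some $\lambda_k=0$. Then $A(\yy)\lambda'=\Null$ is a nontrivial linear relation among the columns of $A_k(\yy)$, which contradicts $D_k(\yy)\ne0$. Hence all $\lambda_i>0$, and complementary slackness forces $m_i(\yy)=\mu$ for every $i$.

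For the sign statement I will use Cramer's rule on the relation $\sum_i\lambda_i\ba_i=\Null$ with all $\lambda_i>0$. Fix $i<k$ and solve for the coefficients using the $n\times n$ system obtained by moving $\lambda_k\ba_k$ to the right-hand side. Replacing the column $\ba_i$ in $A_k$ by $-\lambda_k\ba_k$ and moving that column to its natural position, which takes $k-i-1$ transpositions, gives
\[
\lambda_i D_k(\yy)=-\lambda_k(-1)^{k-i-1}D_i(\yy)=(-1)^{k-i}\lambda_kD_i(\yy).
\]
Since $\lambda_i,\lambda_k>0$, it follows that $(-1)^{k-i}D_i(\yy)D_k(\yy)>0$. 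The only delicate point is the bookkeeping of this column permutation. The main potential obstacle, namely ruling out a vanishing multiplier, is handled directly by the hypothesis $D_k\ne0$ on $Y$. Openness of $Y$ is needed for Lemma~\ref{th:LP_Avriel}, and it is also why local minimaxity suffices.
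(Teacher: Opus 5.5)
Your proof is correct and follows the paper's argument. Both use the same epigraph formulation fed into Lemma~\ref{th:LP_Avriel}, Cramer's rule with the nonsingular $A_k(\yy)$ giving $\lambda_i D_k=(-1)^{k-i}\lambda_k D_i$, and complementary slackness for equioscillation. The only minor difference is how you exclude a vanishing multiplier: you do it directly from the linear independence of the columns of $A_k(\yy)$, whereas the paper normalizes some $\lambda_k=1$ and reads $\lambda_i\neq 0$ off the Cramer formula using $D_i\neq 0$.
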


\begin{proof}
We are going to apply Lemma \ref{th:LP_Avriel} for the function $f_0: \RR\times Y\to \RR$ defined by $f_0(\xi,\xx)=\xi$. So, we consider the following linear programming problem:
\begin{equation}
\begin{aligned}\label{linprogr}
\min\limits_{\xi\in \RR,~ \xx\in Y} f_0(\xi,\xx),\quad \text{ subject to } \quad
%%% \\
f_0(\xi,\xx)-m_i(\xx)\ge 0, \quad (i=1, \dots n+1).
\end{aligned}
\end{equation}

Let $\overline{m}(\xx):=\max\limits_{i=1,\ldots,n+1} m_i(\xx)$ and assume that $\yy$ is a minimax node, that is $\overline{m}(\yy)=\min\limits_{\xx\in Y} \overline{m}(\xx)= \min\limits_{\xx\in Y} \max\limits_{i=1,\ldots,n+1} m_i(\xx)$. It is easy to check that $\yy$ is a minimax node system if and only if $(\overline{m}(\yy),\yy)$ is a solution of our linear programming problem \eqref{linprogr}. So, we can apply Lemma \ref{th:LP_Avriel} to get that there exists a vector $(\lambda_0,\lambda_1,\dots,\lambda_{n+1})\not={\bf 0}$ with non-negative coordinates, such that
\begin{equation}\label{cond_lp_1}
\lambda_0 \nabla_{\xi,\xx} f_0(\overline{m}(\yy),\yy)-\sum\limits_{i=1}^{n+1} \lambda_i \nabla_{\xi,\xx} \left[f_0(\overline{m}(\yy),\yy)-m_i(\yy)\right]=0,
\end{equation}
\begin{equation}\label{cond_lp_2}
\lambda_i\left[f_0(\overline{m}(\yy),\yy)-m_i(\yy) \right]=0, \quad i=1,\dots,n+1.
\end{equation}

Taking the derivative in \eqref{cond_lp_1} with respect to $\xi$, respectively, $\xx$ we get
\begin{equation}\label{lambda0}
\lambda_0-\sum_{i=1}^{n+1} \lambda_i=0,
\end{equation}
and the system of $n$ linear equations with $n+1$ unknown parameters
\begin{equation}\label{lineq}
\sum_{i=1}^{n+1} \lambda_i \frac{\partial m_i}{\partial x_j}(\yy)=0, \qquad j=1,\dots,n,
\end{equation}
respectively.

Recalling that $(\lambda_0,\lambda_1,\dots,\lambda_{n+1})\not={\bf 0}$ and has only non-negative coordinates, \eqref{lambda0} implies that there exists an index $k>0$ with $\lambda_k>0$. As we deal with a homogeneous linear system of equations, dividing by its value we can even assume $\lambda_k=1$. Now, rearranging \eqref{lineq} we get the following non-homogeneous system of linear equations
\begin{equation}\label{inh_lineq}
\sum_{i=1, i\ne k}^{n+1} \lambda_i \frac{\partial m_i}{\partial x_j}(\yy)=-\frac{\partial m_k}{\partial x_j}(\yy), \quad j=1,\dots,n.
\end{equation}
The coefficient matrix of \eqref{inh_lineq} is $A_k$, and it is nonsingular as $D_k\ne 0$. Therefore, applying Cramer's rule we get that
\begin{equation}\label{lambda_i}
\lambda_i=\frac{\det B_i^{(k)}}{D_k}
\end{equation}
where $B_i^{(k)}$ is the modified coefficient matrix obtained from $A_k$ by replacing its $i$th column with the right-hand side vector of \eqref{inh_lineq}. Observe that this vector is just $-\ba_k$, i.e., the negative  of the $k$th column of $A$, so that $B_i^{(k)}$ has essentially the same columns as $A_i$. The only differences being that first, the order of columns of $A_i$ are rearranged, more precisely, leaving everything else on its place, the $k$th column of $A_i$ is now put in the $i$th place, and, second, it is equipped with a negative sign. From this by an application of $|k-i|-1$ changes of neighboring columns and turning back the negative sign, it is easy to compute the value $\det B_i^{(k)}=(-1)^{|k-i|}\det A_i = (-1)^{k-i} D_i$. So we find
\begin{equation}\label{lambda_i JiJk}
\lambda_i=\frac{\det B_i^{(k)}}{D_k}=\frac{(-1)^{k-i}D_i}{D_k}.
\end{equation}
By assumptions of the Lemma, $D_i\not=0$, $i=1,\dots,n+1$ also holds, hence we get that not only $\lambda_k>0$, but every coordinates $\lambda_i \ne 0$ ($i=1,\ldots,n+1$) are nonzero, so in view of non-negativity of $\lambda$, $\lambda_i>0$, for every $i=1,\dots,n+1$. Note that this also determines the relative signs of the determinants $D_i$: $\sign D_i/D_k= (-1)^{k-i}$ for each $1\le i\ne k \le n+1$.

Finally, the strict positivity of all the $\lambda_i$ and \eqref{cond_lp_2} guarantees, that $m_i(\yy)=\overline{m}(\yy)$ for every $i=1,\dots,n+1$ and hence $\yy$ is equioscillating.
\end{proof}

\section{The Homeomorphism Theorem}\label{sec:homeomorphism}

Our goal in this section is to prove the following result, which is fundamental for our purposes. In the classical polynomial interpolation case, the respective result, Theorem 2 of \cite{CBoorPinkus}, was considered by de Boor and Pinkus as their main achievement. In particular, it contains a strong form of the uniqueness of the equioscillating node system.

\begin{theorem}\label{t:homeom}
The difference mapping $\Gamma\colon S\to \mathbb{R}^{n}$, defined by \eqref{gamma}, is a homeomorphism.
\end{theorem}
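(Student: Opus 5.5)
The plan is to show that $\Gamma$ is a $C^1$ local diffeomorphism on all of $S$ and then apply Hadamard's global homeomorphism theorem. That theorem says a proper local homeomorphism between connected manifolds is a covering map, and since $\RR^n$ is simply connected, such a covering is a homeomorphism. Properness of $\Gamma$ is Lemma \ref{l:gamma_proper_new}. $S$ is convex, hence connected. Continuous differentiability of $\Gamma$ follows from Proposition \ref{prop:miderivative}. So everything reduces to proving that the Jacobian of $\Gamma$ is nonsingular at every $\xx\in S$.

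Write the Jacobian of $\Gamma$ as $E A^T$, where $A=A(\xx)$ is the matrix of Definition \ref{Amatrix} and $E$ is the $n\times(n+1)$ difference matrix with rows $\ee_{i+1}-\ee_i$. By the Cauchy--Binet formula,
\[
\det(EA^T)=\sum_{k=1}^{n+1}\det(E_k)\,D_k ,
\]
where $E_k$ is $E$ with its $k$th column deleted. $E_k$ is block triangular with $\pm1$ on the diagonal, so $\det E_k=\pm1$. A direct check gives $\det E_k=(-1)^{k+c_n}$ for a constant $c_n$ depending only on $n$. Hence $\det(EA^T)=\pm\sum_{k}(-1)^kD_k$.

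We split into the two regions.

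On $X^c$, Corollary \ref{c:nonsingularity} gives $D_k=0$ for $k\le n$ and $D_{n+1}\neq0$. The sum therefore reduces to $\pm D_{n+1}\neq 0$, so the Jacobian is nonsingular there. Notably, the singular column $\ba_{n+1}={\bf 0}$ does no harm.

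On $X$, all $D_k$ are nonzero and continuous. By Lemma \ref{l:Xconnected}, $X$ is connected, so each $D_k$ has constant sign on $X$. Theorem \ref{th:minimax} gives a minimax node system $\yy\in X$, and Lemma \ref{Shi_lemma2}, applied with $Y=X$, gives $(-1)^{k-i}D_i(\yy)D_k(\yy)>0$ for all $i,k$. Thus the quantities $(-1)^kD_k$ all share one sign at $\yy$, and by constancy of signs they share it throughout $X$. Consequently $\sum_k(-1)^kD_k\neq0$ on $X$ as well.

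Together these give a nonvanishing Jacobian on all of $S$. The inverse function theorem then makes $\Gamma$ a local homeomorphism, and Hadamard's theorem finishes the proof.

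The main obstacle I expect is the sign bookkeeping on $X$. One must check carefully that the sign factor $\det E_k$ from Cauchy--Binet matches the alternation $(-1)^{k-i}$ delivered by Shi's lemma, so that the terms reinforce each other rather than cancel. The key structural input is the connectedness of $X$, which is what lets a single minimax point fix the sign pattern everywhere on $X$. On $X^c$, it is the nonsingularity of $A_{n+1}$ alone that saves the argument.
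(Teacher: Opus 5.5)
Your proposal is correct and follows the paper's proof. The shared steps are: properness plus Hadamard's theorem; the expansion $\det D\Gamma=\sum_{k}(-1)^{k+1}D_k$, where your Cauchy--Binet computation gives $\det E_k=(-1)^{k-1}$, matching Lemma \ref{l:linalg}; the case $X^c$ via $D_{n+1}\neq 0$; and on $X$ the sign pattern carried over from the minimax point by connectedness, as in Lemma \ref{l:JiJkonX}. The one difference is a slight shortcut on $X$: you read off $\det D\Gamma\neq 0$ directly from the fact that all terms $(-1)^{k+1}D_k$ share one sign, whereas the paper reaches the same conclusion through the augmented linear system \eqref{lin_eq2}.
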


\begin{proof}
According to Lemma \ref{l:gamma_proper_new}, $\Gamma$ is a proper mapping.
As $S$ is a connected open set (an open domain), and the image space is $\RR^n$, knowing the properness a well-known theorem of topology going back to Hadamard says that $\Gamma$ is a global homeomorphism, if and only if it is a local homeomorphism.  In turn, local homeomorphism is equivalent, in view of continuous differentiability, the nonvanishing of the Jacobian of the derivative matrix
\[
D\Gamma=\left(\frac{\partial m_{i+1}}{\partial x_j} (\xx)-\frac{\partial m_i}{\partial x_j}(\xx)\right)_{i=1,j=1}^{n,n} = \left[\cdots, {\ba}_{i+1}- {\ba}_i, \cdots \right]_{i=1}^{n},
\]
with the notation of Definition \ref{Amatrix} shortened here as $\ba_i=\ba_i(\xx)$.

\begin{lemma}\label{l:linalg} Let $\vv_i \in \RR^n$ be any vectors for $i=1,\ldots,n+1$ and let $V_k$ denote the determinant of $[\vv_1,\ldots,\vv_{k-1},\vv_{k+1},\ldots \vv_{n+1}]$. Then we have
$$
V:=\det [ \vv_2-\vv_1,\ldots, \vv_{i+1}-\vv_i,\ldots,\vv_{n+1}-\vv_n] = \sum_{k=1}^{n+1} (-1)^{k+1} V_k.
$$
\end{lemma}
\begin{proof} We include this standard linear algebra argument for completeness. So let us write
$$
V=\det[\cdots, {\vv}_{i+1}- {\vv}_i, \cdots]_{i=1}^{n} = \det[\cdots, {\vv}_{i+1}- {\vv}_1, \cdots]_{i=1}^{n}.
$$
When expanding the determinant according to the additive law in columns, we obtain $2^{n}$ determinants with various choices of either $\vv_{k+1}$ or $-\vv_1$ in the $k$th column, but out of these only those with no two equal columns, that is with at most one selections of $-\vv_1$ count (as the remaining ones vanish having two identical columns). So, we get
\begin{align*}
V & =\det[\vv_2, \cdots, {\vv}_{i+1}, \cdots, \vv_{n+1}]
+\sum_{k=1}^{n} \det[\vv_2 \cdots, \vv_k, -\vv_1, \vv_{k+2}, \cdots, \vv_{n+1}]
\\& = V_1 +\sum_{k=1}^{n} - (-1)^{k-1} V_{k+1}=\sum_{k=1}^{n+1} (-1)^{k+1} V_k .
\end{align*}
\end{proof}

\emph{Continuation of the proof of Theorem \ref{t:homeom}.} So, in view of this lemma and with the notations of Definition \ref{Amatrix} we have
\begin{equation}\label{DGsignedsum}
\det D\Gamma(\xx) = \sum_{k=1}^{n+1} (-1)^{k+1} D_k(\xx).
\end{equation}

\medskip
Next we are to show $\det D\Gamma \ne 0$ separately both for $X$ and for $X^c$.

\medskip
As for $X^c$, the situation is relatively simple. According to Corollary \ref{c:nonsingularity}, all $A_k$ are singular (with the last column being identically zero) except for $A_{n+1}$, which is not. So for the respective Jacobian determinants, $D_k(\xx)=0$, $k=1,\ldots,n$, while $D_{n+1}(\xx)\ne 0$, whenever $\xx\in X^c$.
Therefore we have that $\det D\Gamma(\xx)=(-1)^{n+2}D_{n+1}(\xx)\ne 0$ on $X^c$.

\medskip
Now we are to prove $\det D\Gamma(\xx)\not=0$ on $X=\{\xx\in S: z_{n+1}(\xx)>x_n\}$. We follow the ideas of Shi (Lemma 3 in \cite{Shi}) with slight modifications. Note that in \cite{Shi} $X$ stands for the whole set $S$, and a number of other conditions, including properness \emph{on $X$}, is required. So, in particular, in \cite{Shi} the existence of the minimax node system $\yy$ is also based on the properness property. In our case Theorem \ref{th:minimax} guarantees the existence of a minimax node, and we also know that it belongs to $X$. Consequently, we don't need the properness property of $\Gamma$ on $X$ -- very fortunately, because it is even false. (Nevertheless, $\Gamma$ \emph{is proper} on the \emph{whole set} $S$.) However, we need the following extension of the found sign rules from the minimax point $\yy$ to all $\xx \in X$.

\begin{lemma}\label{l:JiJkonX} If $\xx\in X$ and $1 \le i,k \le n+1$ then we have
$ (-1)^{i-k} D_i(\xx)D_k(\xx)>0$.
\end{lemma}
\begin{proof} First, if $i=k$, then the statement follows from the fact that $D_i(\xx)\ne 0$ at $\xx \in X$, which was given in Corollary \ref{c:nonsingularity}. For the following let us fix a pair $i,k$ with $1\le i\ne k \le n+1$. Again from the nonvanishing of these Jacobians, it follows that $(-1)^{i-k} D_i(\xx)D_k(\xx)\ne 0$ on $X$.

We know from Theorem \ref{th:minimax} the existence of at least one minimax node system $\yy \in X$. Lemma \ref{Shi_lemma2} gave that for any minimax node system $\yy \in X$ $(-1)^{k-i}D_i(\yy)D_k(\yy)>0$ holds true. Moreover, Lemma \ref{l:Xconnected} gave that $X\subset S$ is a connected open domain. Since the continuous image of a connected set is connected, $(-1)^{k-i}D_iD_k(X)$ is a connected subset of $\RR$, not containing 0, but having a positive element (namely, $(-1)^{k-i}D_i(\yy)D_k(\yy)$). So we must have $(-1)^{k-i}D_iD_k(X)\subset (0,\infty)$, that is, $(-1)^{k-i}D_i(\xx)D_k(\xx)>0$ for every $\xx\in X$.
\end{proof}

\emph{Continuation of the proof of Theorem \ref{t:homeom}.} Let $\xx\in X$ be arbitrary and let us consider the following homogeneous system of $n$ linear equations of $n+1$ unknown parameters $\lambda_i(\xx)$:
\begin{equation}\label{ineq_shi}
\sum\limits_{i=1}^{n+1} \lambda_i(\xx)\frac{\partial m_i}{\partial x_j}(\xx)=0, \quad j=1,\dots,n.
\end{equation}
Then, there exists a not identically zero solution vector $(\lambda_1(\xx),\dots,\lambda_{n+1}(\xx))$. So, e.g., let $\lambda_k(\xx)\ne 0$. Moreover, let us assume for normalization that $\lambda_k(\xx)=1$.

Rearranging the system \eqref{ineq_shi}, we obtain the non-homogeneous system of linear equations
\begin{equation}\label{ineq_shi_2_rearr}
\sum\limits_{i=1, ~i\ne k}^{n+1} \lambda_i(\xx)\frac{\partial m_i}{\partial x_j}(\xx)=-\frac{\partial m_k}{\partial x_j}(\xx), \quad j=1,\dots,n.
\end{equation}

Here we can repeat for a general $\xx \in X$ the argument in \eqref{lineq} -- \eqref{lambda_i JiJk}, given there only for a minimax point $\yy$. The coefficient matrix of \eqref{ineq_shi_2_rearr} is $A_k(\xx)$, hence its determinant is $D_k(\xx)\ne 0$. So Cramer's rule furnishes
\[
\lambda_i(\xx)=\frac{\det B_i^{(k)}(\xx)}{D_k(\xx)},
\]
where $B_i^{(k)}(\xx)$ can be obtained from $A_i(\xx)$ by replacing $-(\partial m_k/\partial x_1, \dots, \partial m_k/\partial x_n)^t=-\ba_k(\xx)$ for its $i$th column $\ba_i(\xx)$. Moreover, $A_i(\xx)$ can be obtained from $ B_i^{(k)}(\xx)$, by changing the column $-\ba_k(\xx)$ with its neighbor $|i-k|-1$ times to reach the $k$th place and then multiplying it by $(-1)$. It shows that $\det B_i^{(k)}(\xx)=(-1)^{i-k}\det A_i(\xx)=(-1)^{i-k} D_i(\xx)$.
Consequently, we get that $\lambda_i(\xx)=(-1)^{i-k}D_i(\xx)/D_k(\xx)$ for every $i\in\{1,\dots,n+1\}\setminus\{k\}$, which guarantees that $\lambda_i(\xx) \ne 0$, for every $i=1,\dots,{n+1}$.

Freeing ourselves from the normalization $\lambda_k(\xx)=1$, we obtain from Lemma  \ref{th:LP_Avriel} that any nontrivial solution $(\lambda_1,\ldots,\lambda_{n+1})$ of \eqref{ineq_shi} must have strictly nonzero coordinates with all of the same sign.

Next, let us consider the system
%\begin{equation}\label{lin_eq2}
%\begin{aligned}
%\sum\limits_{i=1}^{n+1} \lambda_i(\xx)&=0,
%\\
%\sum\limits_{i=1}^{n+1} \lambda_i(\xx)\frac{\partial m_i}{\partial x_j}(\xx)&=0, \quad j=1,\dots,n.
%\end{aligned}
%\end{equation}
\begin{equation}\label{lin_eq2}
\sum\limits_{i=1}^{n+1} \lambda_i(\xx)=0,
\qquad \text{and} \qquad
\sum\limits_{i=1}^{n+1} \lambda_i(\xx)\frac{\partial m_i}{\partial x_j}(\xx)=0, \quad j=1,\dots,n.
\end{equation}
For a fixed $\xx\in X$ this is a homogeneous system of $n+1$ linear equations of the $n+1$ unknowns $\lambda_i(\xx)$. Moreover, this system has only the trivial solution, as otherwise the first equation implies that there are both strictly negative and strictly positive coordinates of $(\lambda_1(\xx),\dots,\lambda_{n+1}(\xx))$, which contradicts to what has been established for solutions of the subsystem \eqref{ineq_shi}. So, this is a nonsingular system of linear equations.

Therefore, the determinant of the coefficient matrix of \eqref{lin_eq2} is nonvanishing, i.e.
\begin{equation}
\begin{aligned}
\left|\begin{array}{cccc}
1& 1& \ldots & 1\\
\frac{\partial m_1}{\partial x_1}(\xx) & \frac{\partial m_2}{\partial x_1}(\xx) & \ldots & \frac{\partial m_{n+1}}{\partial x_1}(\xx)\\
\vdots & \vdots &  \ddots & \vdots\\
\frac{\partial m_1}{\partial x_n}(\xx) & \frac{\partial m_2}{\partial x_n}(\xx) & \ldots & \frac{\partial m_{n+1}}{\partial x_n}(\xx)
\end{array}
\right|\not=0 .
\end{aligned}
\end{equation}
Subtracting from each column the preceding one, starting form the right and continuing until the second, we get the equivalent determinant
\begin{equation}
\begin{aligned}
\left|\begin{array}{cccc}
1 &  0 & \ldots &  0
\\
\frac{\partial m_1}{\partial x_1}(\xx)   & \frac{\partial m_2}{\partial x_1}(\xx)-\frac{\partial m_1}{\partial x_1}(\xx)   & \ldots & \frac{\partial m_{n+1}}{\partial x_1}(\xx)-\frac{\partial m_{n}}{\partial x_1}(\xx)
\\
\vdots & \vdots & \ddots & \vdots
\\
\frac{\partial m_1}{\partial x_n}(\xx)   & \frac{\partial m_2}{\partial x_n}(\xx)-\frac{\partial m_1}{\partial x_n}(\xx) & \ldots &\frac{\partial m_{n+1}}{\partial x_n}(\xx)-\frac{\partial m_{n}}{\partial x_n}(\xx)
\end{array}
\right| \ne 0.
\end{aligned}
\end{equation}
Finally, expanding the determinant with respect to its first row, we get that $\det(D\Gamma)\not=0$ on $X$, too.

Altogether it means, that $\det(D\Gamma)\not=0$ both on $X$ and on $X^c$. Therefore, $\Gamma$ is a local, and hence in view of properness, also a global homeomorphism.
\end{proof}

\begin{corollary}
The equioscillating node system, and therefore the extremal node system is necessarily unique.
\end{corollary}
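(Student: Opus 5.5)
The corollary follows from Theorem \ref{t:homeom} together with Theorem \ref{th:minimax}. A node system $\xx \in S$ equioscillates exactly when $m_1(\xx)=\dots=m_{n+1}(\xx)$, that is, exactly when $\Gamma(\xx)=\Null \in \RR^n$, where $\Gamma$ is the difference mapping of \eqref{gamma}. So the plan is to read uniqueness off the injectivity of $\Gamma$.

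\emph{Uniqueness of the equioscillating system.} Suppose $\xx,\yy\in S$ both equioscillate. Then $\Gamma(\xx)=\Gamma(\yy)=\Null$. Theorem \ref{t:homeom} says $\Gamma$ is a homeomorphism of $S$ onto $\RR^n$, so in particular it is injective, and therefore $\xx=\yy$. The same theorem gives existence as well, because surjectivity means $\Null$ has a preimage. Existence is not needed separately, however, since it also follows from the next step.

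\emph{Uniqueness of the extremal system.} Theorem \ref{th:minimax} gives that at least one minimax node system exists. It also says that every minimax node system lies in $X$ and equioscillates. Hence every extremal system is an equioscillating system, and by the previous step there is only one equioscillating system. So the extremal system is unique and coincides with the unique solution of $\Gamma(\xx)=\Null$.

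There is no real obstacle left at this point: all the difficulty sits in Theorem \ref{t:homeom}, namely the nonvanishing of $\det D\Gamma$ on both $X$ and $X^c$, and the properness needed for Hadamard's theorem. The one thing to state explicitly is that "extremal" means minimizing $\overline{m}(\xx)=\max_i m_i(\xx)$ over the open set $S$. That is exactly the setting of Theorem \ref{th:minimax}, so its conclusion that such minimizers equioscillate applies to every extremal system.
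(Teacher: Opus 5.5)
Your proposal is correct and follows the paper's own proof. Equioscillation is equivalent to $\Gamma(\mathbf{x})=\mathbf{0}$, injectivity of the homeomorphism $\Gamma$ from Theorem~\ref{t:homeom} gives uniqueness, and Theorem~\ref{th:minimax} shows that every extremal system lies in $X$ and equioscillates, so it must be that unique system.
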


\begin{proof}
For every equioscillating node system, its image under $\Gamma$ is the zero vector $\bf{0}$. However, $\Gamma$ is a homeomorphism, hence there exists a unique $\yy\in S$ such that $\Gamma(\yy)=\bf{0}$. Moreover Theorem \ref{th:minimax} has already given that a minimax node system must belong to $X$ and have the equioscillation property. Therefore, the unique equioscillating system is the only optimal node system.
\end{proof}

\section{The Erd\H os Conjecture and issues of majorization}\label{sec:majorization}

In this section we will prove that the Erd\H os Conjecture also holds for exponentially weighted polynomial interpolation on the halfline. However, somewhat surprisingly, we will also find that the strong intertwining of maxima vectors $\mm(\xx), \mm(\yy)$, (known to hold in the classical case, see Theorem 3 of de Boor and Pinkus \cite{CBoorPinkus}) fails to hold in this setting. We will give a precise description in which extent the  property generalizes to exponentially weighted interpolation. These findings underline the relevance of singularity issues -- once there are singular derivative matrices, we necessarily have a more complicated picture than we had classically.

\begin{lemma}\label{l:Cramer} Let $1\le k \le n+1$ and let $\Phi(\xx):=\Phi_k(\xx):=(m_i(\xx))_{i=1,i\ne k}^{n+1}$ ($\xx\in S$). Assume that $D_k(\uu)\ne 0$ for a node system $\uu \in S$. Then in a sufficiently small neighborhood $U \subset S$ of $\uu$, $\Phi:U\leftrightarrow W$ is a homeomorphism between $U$ and $W:=\Phi(U)$, and on $W$ $m_k$ can be expressed as a continuously differentiable function $G=G_k$ of the interval maxima $\ww:=(m_1,\ldots,m_{k-1},m_{k+1},\ldots, m_{n+1})\in W$. Moreover, for $\ww=\Phi(\vv)$ with $\vv \in U$ the gradient of $G$ is
\begin{equation}\label{Mnderivative}
\nabla_{\ww} G (\Phi(\vv)) =
\begin{bmatrix}
. \\ . \\ . \\
\frac{\partial m_k}{\partial m_i}(\ww)
\\ . \\ . \\ .
\end{bmatrix}_{i=1,i\ne k}^{n+1}
= \begin{bmatrix}
. \\ . \\ .\\
\frac{(-1)^{k-i-1}D_i(\vv)}{D_k(\vv)}
\\ . \\ . \\ .
\end{bmatrix}_{i=1, i\ne k}^{n+1}
(\vv \in U, \ww=\Phi(\vv)\in W).
\end{equation}
\end{lemma}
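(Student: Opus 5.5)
The plan is to apply the inverse function theorem to $\Phi=\Phi_k$ and then the chain rule, reading off the gradient of $G$ from the Cramer-type computation already done in the proof of Lemma \ref{Shi_lemma2}.

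\emph{Local inverse.} By Proposition \ref{prop:miderivative} all $m_i$ are continuously differentiable on $S$, so $\Phi\colon S\to\RR^n$ is $C^1$. Its Jacobian matrix at $\uu$ is the transpose of $A_k(\uu)$, whose determinant is $D_k(\uu)\ne0$. The inverse function theorem gives a neighbourhood $U$ of $\uu$ such that:
\begin{itemize}
\item $D_k\ne0$ on $U$, which follows from continuity of the $D_k$;
\item $W=\Phi(U)$ is open;
\item $\Phi\colon U\to W$ is a $C^1$ diffeomorphism.
\end{itemize}
We then set $G:=m_k\circ\Phi^{-1}$ on $W$. This is $C^1$ as a composition of $C^1$ maps, and by construction $m_k(\vv)=G(\Phi(\vv))$ for $\vv\in U$.

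\emph{Gradient of $G$.} Differentiating $m_k(\vv)=G(\Phi(\vv))$ with respect to $x_j$ gives
\[
\frac{\partial m_k}{\partial x_j}(\vv)=\sum_{i\ne k}\frac{\partial G}{\partial m_i}(\ww)\,\frac{\partial m_i}{\partial x_j}(\vv),\qquad j=1,\dots,n .
\]
In vector form this reads $\ba_k=\sum_{i\ne k} g_i\,\ba_i$, where $g_i:=\partial G/\partial m_i$.

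This is the same linear system as \eqref{inh_lineq}, with $\lambda_k=1$ and $\lambda_i=-g_i$. Its coefficient matrix is $A_k(\vv)$, which is nonsingular. Cramer's rule, together with the column-shuffling count from the proof of Lemma \ref{Shi_lemma2}, gives
\[
-g_i=\lambda_i=\frac{(-1)^{k-i}D_i(\vv)}{D_k(\vv)} .
\]
Hence $g_i=(-1)^{k-i-1}D_i(\vv)/D_k(\vv)$, which is \eqref{Mnderivative}.

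\emph{Expected obstacle: the sign.} The only delicate point is the sign bookkeeping. Replacing the $i$th column of $A_k$ by $\ba_k$ (rather than $-\ba_k$) and moving it to position $k$ takes $|k-i|-1$ adjacent transpositions. This yields $\det=(-1)^{|k-i|-1}D_i=(-1)^{k-i-1}D_i$. I would double-check this count directly, independently of the earlier lemma, to confirm the exponent $k-i-1$.
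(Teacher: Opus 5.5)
Your proposal is correct and follows essentially the same route as the paper: the inverse function theorem for $\Phi$ (Jacobian $A_k^T$), $G=m_k\circ\Phi^{-1}$, the chain rule giving $A_k(\vv)\,\nabla_{\ww}G=\ba_k$, and Cramer's rule with $|k-i|-1$ adjacent column swaps, yielding $(-1)^{k-i-1}D_i(\vv)/D_k(\vv)$. The only cosmetic difference is that you borrow the sign count from the proof of Lemma \ref{Shi_lemma2} via $\lambda_i=-g_i$, whereas the paper redoes the count directly with $\ba_k$ in place of $-\ba_k$; your count is correct in both cases $i<k$ and $i>k$.
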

\begin{proof} We already know that all the $m_i$ are continuously differentiable functions of the variables $\xx \in S$, hence the function $$\Phi(\xx):= (m_1(\xx),\ldots,m_{k-1}(\xx),m_{k+1}(\xx),\ldots,m_{n+1}(\xx))$$ is continuously differentiable, too. Moreover, wherever $D_k(\xx)\ne 0$, it is a local homeomorphism with derivative matrix $A_k^T(\xx)$ at $\xx$. Therefore, it has a continuously differentiable inverse function $\Phi^{-1}$, too. So, we can write $m_k(\xx)=m_k(\Phi^{-1}(m_1(\xx),\ldots,m_{k-1}(\xx),m_{k+1}(\xx),\ldots,m_{n+1}(\xx)))$, and $G=m_k \circ \Phi^{-1}$ or $G \circ \Phi= m_k$. From this it is clear that $G$ is a continuously differentiable function of the values $(m_1,\ldots,m_{k-1},m_{k+1},\ldots,m_{n+1})$ in a small neighborhood of any given point $\Phi(\uu)=(m_i(\uu))_{i=1, i\ne k}^{n+1}$ with $D_k(\uu)\ne 0$. Equivalently, for a small neighborhood $U$ of $\uu \in S$, $G$ is a continuously differentiable mapping in $W=\Phi(U)$.

So let us differentiate at some $\vv \in U$ the composite function $m_k=G\circ \Phi$. By an application of the chain rule we get for each $j=1,\ldots,n$ that\footnote{To be fully precise, instead of the usual formalism for the chain rule, used here, as well as in the very statement of the Lemma, we should write $G$ in place of $m_k$, as formally the domain of $m_k$ is $S$, as it is the composite function $G\circ \Phi$, whereas here only $G$ is differentiated (with respect to the other $m_i$ in $W$). So, here $\sum_{i=1, i\ne k}^{n+1} \frac{\partial G}{\partial m_i}(\Phi(\vv)) \frac{\partial m_i}{\partial x_j}(\vv)$ should stand, but with a slight abuse of notation we write $m_k$ in place of $G=G_k$ here and everywhere.}
$$
\frac{\partial m_k}{\partial x_j}(\vv) = \sum_{i=1, i\ne k}^{n+1} \frac{\partial m_k}{\partial m_i}(\Phi(\vv)) \frac{\partial m_i}{\partial x_j}(\vv),
$$
or, in matrix form
\begin{equation}\label{Mnderivative_2}
\begin{bmatrix}
. \\ . \\ . \\
\frac{\partial m_k}{\partial x_j}(\vv)
\\ . \\ . \\ .
\end{bmatrix}_{j=1}^n
= \begin{bmatrix}
. & .& . \\ . & .& .  \\ . & .& . \\
. &  \frac{\partial m_i}{\partial x_j}(\vv) & . \\
. & .& . \\ . & .& .  \\ . & .& .
\end{bmatrix}_{j=1,i=1, i\ne k}^{n,n+1}
\cdot \quad
\begin{bmatrix}
. \\ . \\ . \\
\frac{\partial m_k}{\partial m_i}(\Phi(\vv))
\\ . \\ . \\ .
\end{bmatrix}_{i=1, i\ne k}^{n+1}
.
\end{equation}

The square matrix on the right hand side is $A_k(\vv)$, with determinant $D_k(\vv)\ne 0$. So, this system of equations is a linear equation in $\RR^n$ with nonsingular coefficient matrix $A_k$ and constants $\beta_{j}:=\frac{\partial m_k}{\partial x_j}(\vv)$, considering the partial derivatives $\xi_i:=\frac{\partial m_k}{\partial m_i}(\ww)=\frac{\partial m_k}{\partial m_i}(\Phi(\vv))$ as the $n$ unknowns.
By Cramer's rule, the system can be solved and the unknowns can be represented as
$$
\xi_i=\frac{\det B_i}{D_k},
$$
where $B_i$ is the matrix\footnote{Note that $B_i$ is almost the same as $B_i^{(k)}$ in \eqref{lambda_i}, but for the negative sign of $\ba_k$ in the latter, causing the opposite sign of $\xi_i$ here.} obtained from $A_k$ by replacing its $i$th column $\ba_i$ by the vector of constants $\bb:=(\beta_1,\ldots,\beta_n)^t$. However, this new $i$th column is just the $k$th column $\ba_k$ of $A$, and exchanging it one by one with the $i+1$st, $i+2$nd etc. $k-1$th columns if $k>i$ and $i-1$st, $i-2$nd etc. $k+1$th if $k<i$, we are led to $A_i$, the matrix obtained from $A$ by deleting its $i$th column. So, $\det B_i=(-1)^{k-i-1} \det A_i=(-1)^{k-i-1} D_i$ and $\xi_i=(-1)^{k-i-1}D_i/D_k$, as desired.
\end{proof}

\begin{corollary}\label{l:pert_mkmi}
Let $\xx\in X$. Then, for any fixed $k$ with $1\le k \le n+1$, in a small neighborhood $W$ of $(m_1(\xx),\ldots,m_{k-1}(\xx),m_{k+1}(\xx),\ldots,m_{n+1}(\xx))$ the interval maxima $m_k$ can be expressed as a differentiable function of $\ww=(m_1,\ldots,m_{k-1},m_{k+1},\ldots,m_{n+1})$, and $\partial m_k/ \partial m_i<0$ for every $i=1,\dots,n+1$, $i\ne k$.
\end{corollary}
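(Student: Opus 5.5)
The plan is to combine Lemma \ref{l:Cramer} with the sign rule of Lemma \ref{l:JiJkonX}. Fix $\xx\in X$ and an index $k$. By Corollary \ref{c:nonsingularity}, $D_k(\xx)\ne 0$ because $\xx\in X$. So Lemma \ref{l:Cramer} applies with $\uu=\xx$. It gives a neighborhood $U\subset S$ of $\xx$ and its image $W=\Phi_k(U)$, a neighborhood of $(m_i(\xx))_{i\ne k}$, on which $m_k=G_k(\ww)$ is a continuously differentiable function. Shrinking $U$ if necessary, we may assume $U\subset X$; this is possible because $X$ is open. The set $W=\Phi_k(U)$ stays open, since $\Phi_k$ is a local homeomorphism on $U$.

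Next, compute the sign of the partial derivatives. By \eqref{Mnderivative}, for every $\vv\in U$ and $i\ne k$,
\[
\frac{\partial m_k}{\partial m_i}(\Phi_k(\vv))=\frac{(-1)^{k-i-1}D_i(\vv)}{D_k(\vv)}=-\frac{(-1)^{k-i}D_i(\vv)D_k(\vv)}{D_k(\vv)^2}.
\]
Lemma \ref{l:JiJkonX} gives $(-1)^{i-k}D_i(\vv)D_k(\vv)>0$ for all $\vv\in X$, and in particular for $\vv\in U\subset X$. The parity of $k-i$ and $i-k$ is the same, so the numerator is positive and the whole expression is strictly negative. Hence $\partial m_k/\partial m_i<0$ on all of $W$, for every $i\ne k$.

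The argument has essentially no obstacle. The only point needing care is that the sign rule must hold at every point of the neighborhood, not only at $\xx$. This is guaranteed by choosing $U\subset X$ and using that Lemma \ref{l:JiJkonX} holds throughout $X$. That lemma in turn rests on the connectedness of $X$ (Lemma \ref{l:Xconnected}) and on the existence of a minimax point in $X$ (Theorem \ref{th:minimax}).
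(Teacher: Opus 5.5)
Your proof is correct and follows the paper's own argument: you apply Lemma \ref{l:Cramer} at $\xx$, which is legitimate because $D_k(\xx)\ne 0$ on $X$ by Corollary \ref{c:nonsingularity}, and you read off the sign of $(-1)^{k-i-1}D_i/D_k$ from Lemma \ref{l:JiJkonX}. You also shrink $U$ into the open set $X$ so that the sign rule holds at every $\vv\in U$; the paper leaves this step implicit in the phrase ``small neighborhood''.
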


\begin{proof}
We have just seen that $\frac{\partial m_k}{\partial m_i}(\ww) = (-1)^{k-i-1} D_i(\vv)/D_k(\vv)$, if $\vv$ belongs to a small neighborhood of $\xx$ and $\ww=\Phi(\vv)$. Comparing to Lemma \ref{l:JiJkonX} immediately furnishes the negativity of this expression.
\end{proof}

\begin{theorem}[Limitations to majorization]\label{th:nonmajorization}
If for any pair $\xx, \yy \in S$ we have that $m_i(\xx) \le m_i(\yy)$, then either $\xx=\yy$, or both $\xx, \yy \in X^c$.

Such majorizing pairs in $X^c$ do occur, however, even in the following strict sense: it may well happen that $m_i(\xx)<m_i(\yy)$ for $i=1,\ldots,n$, whereas $m_{n+1}(\xx)=m_{n+1}(\yy)=1$.
\end{theorem}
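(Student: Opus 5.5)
The first assertion will come from a minimax argument relative to $\yy$, followed by the homeomorphism Theorem \ref{t:homeom}. The second will come from an explicit one-parameter family inside $X^c$.

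\emph{Reduction.} Assume $m_i(\xx)\le m_i(\yy)$ for all $i$, and that $\xx,\yy$ are not both in $X^c$. If $\yy\in X^c$, then $m_{n+1}(\xx)\le m_{n+1}(\yy)=1$. This forces $m_{n+1}(\xx)=1$, i.e.\ $\xx\in X^c$, which is excluded. So from now on $\yy\in X$, i.e.\ $m_{n+1}(\yy)>1$.

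\emph{Auxiliary minimization.} Put
\[
F(\uu):=\max_{1\le i\le n+1}\bigl(m_i(\uu)-m_i(\yy)\bigr),\qquad \uu\in S.
\]
The sublevel set $\{F\le 0\}$ equals $\mm^{-1}\bigl(\prod_i[1,m_i(\yy)]\bigr)$. It is compact by Corollary \ref{c:proper_m} and nonempty, since it contains $\yy$ and $\xx$. Hence $F$ attains its minimum $c\le F(\xx)\le 0$ at some $\uu^*\in S$.

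\emph{Step 1: every minimizer lies in $X$.} Suppose a minimizer $\vv$ lies in $X^c$. Then $m_{n+1}(\vv)-m_{n+1}(\yy)=1-m_{n+1}(\yy)<0$. Corollary \ref{c:nonsingularity} gives $D_{n+1}(\vv)\ne0$, so $(m_1,\dots,m_n)$ is a local homeomorphism near $\vv$ (exactly as in the proof of Theorem \ref{th:minimax}). We can therefore lower $m_1,\dots,m_n$ slightly while $m_{n+1}$ stays strictly below $m_{n+1}(\yy)+c$. This yields a point with $F<c$, a contradiction.

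\emph{Step 2: the minimizer is $\yy$, and $c=0$.} So $\uu^*\in X$, where all $D_k\ne0$. The problem of minimizing $F$ is the LP problem \eqref{linprogr} with the $m_i$ shifted by the constants $m_i(\yy)$; the gradients are unchanged. Hence the proof of Lemma \ref{Shi_lemma2}, applied on the open set $X$, gives all $\lambda_i>0$, and therefore $m_i(\uu^*)-m_i(\yy)=c$ for every $i$. Consequently $\Gamma(\uu^*)=\Gamma(\yy)$, and Theorem \ref{t:homeom} gives $\uu^*=\yy$. Then $c=F(\yy)=0$.

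\emph{Conclusion of the first part.} Now $F(\xx)\le 0=c$, so $\xx$ is itself a minimizer. Applying Steps 1–2 to $\xx$ gives first $\xx\in X$ and then $\xx=\yy$. The delicate point is Step 1: one must check that the local surjectivity of $(m_1,\dots,m_n)$ near a point of $X^c$ is available, which is exactly the role of $D_{n+1}\ne0$ on $X^c$.

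\emph{Second part: a strict majorizing pair in $X^c$.} Fix $\wx\in S^{n-1}$ and choose $\phi(\wx)<x_n<x_n'$. By Lemma \ref{l:xstar}, every point $(\wx,t)$ with $t\ge \phi(\wx)$ lies in $X^c$, so $m_{n+1}\equiv1$ along the segment from $\xx=(\wx,x_n)$ to $\yy=(\wx,x_n')$. Lemma \ref{l:partialmixn} gives $\partial m_i/\partial x_n>0$ on $X^c$ for $i=1,\dots,n$. Integrating along the segment yields $m_i(\xx)<m_i(\yy)$ for $i\le n$, while $m_{n+1}(\xx)=m_{n+1}(\yy)=1$.
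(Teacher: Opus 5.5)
\textbf{There is a gap in Step 1 as applied to $\uu^*$, before $c=0$ is known.} Your reduction to $\yy\in X$ is fine. So are Step 2, the final application of Step 1 to $\xx$ once $c=0$ is known (then $1-m_{n+1}(\yy)<0=c$), and your second part via Lemma \ref{l:xstar} and Lemma \ref{l:partialmixn}, which is the paper's argument.

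To lower $F$ at a minimizer $\vv\in X^c$, the $(n+1)$st term must be non-binding, i.e.\ you need $1=m_{n+1}(\vv)<m_{n+1}(\yy)+c$. You have only shown $1<m_{n+1}(\yy)$, and $c\le 0$. Since $m_{n+1}\ge 1$ on all of $S$, you have $F\ge 1-m_{n+1}(\yy)$ everywhere, so $c\ge 1-m_{n+1}(\yy)$. Your argument is correct if this inequality is strict, but nothing in it rules out equality.

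If $c=1-m_{n+1}(\yy)$, then every minimizer lies in $X^c$ and the binding index is $n+1$. Lowering $m_1,\dots,m_n$ then leaves $F$ unchanged. No first-order argument helps either: $\nabla m_{n+1}=\Null$ on $X^c$, so the conditions of Lemma \ref{th:LP_Avriel} hold trivially at such a point with $\lambda_0=\lambda_{n+1}=1$ and all other $\lambda_i=0$. Excluding this case means showing that no $\vv\in X^c$ satisfies $m_i(\vv)\le m_i(\yy)-(m_{n+1}(\yy)-1)$ for $i\le n$. That is, $\yy\in X$ cannot strictly majorize a point of $X^c$, which is a special case of the statement being proved. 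So the argument is circular exactly here. The delicate point is not local surjectivity of $(m_1,\dots,m_n)$, which $D_{n+1}\ne0$ does give, but whether the last coordinate is binding.

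\textbf{How the paper avoids this.} It optimizes relative to the lower point $\xx$ rather than the upper point $\yy$. First it perturbs $\yy$ inside $X$, using $D_k(\yy)\ne0$, so that $m_i(\yy)>m_i(\xx)$ for all $i$; the all-equal case is settled by injectivity of $\Gamma$. It then works on
$W=\{\ww:\ \min_i(m_i(\ww)-m_i(\xx))\ge\mu,\ \max_i(m_i(\ww)-m_i(\xx))\le M\}$.
Because $\mu>0$ forces $m_{n+1}(\ww)>m_{n+1}(\xx)\ge1$, $W$ is automatically a compact subset of $X$, so the degenerate $X^c$ situation never arises. The paper then maximizes the minimal difference and, among the maximizers, minimizes the maximal one. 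A direct perturbation using $\partial m_K/\partial m_k<0$ on $X$ (Corollary \ref{l:pert_mkmi}) shows the two coincide. Hence $\Gamma(\zz)=\Gamma(\xx)$, so $\zz=\xx$ by Theorem \ref{t:homeom}, contradicting $\mu^*>0$. To repair your proof you need some comparable device that keeps the extremal point inside $X$.
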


Before proving the theorem itself, let us present here that the first part of the theorem entails in particular the positive answer to the Erd\H os Conjecture in our setting.

\begin{corollary}[Sandwich Property]\label{cor:sandwich}
If $\yy$ is the unique extremal node system, then for any other node system $\xx \ne \yy$ we necessarily have
$$
\mul(\xx):=\min_{1\le i\le n+1} m_i(\xx)< m(\yy) <\mol(\xx):=\max_{1\le i\le n+1} m_i(\xx).
$$
\end{corollary}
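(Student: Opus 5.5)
We derive the Sandwich Property directly from the first part of Theorem \ref{th:nonmajorization}, combined with Theorem \ref{th:minimax} and the uniqueness corollary following Theorem \ref{t:homeom}. Write $m(\yy)$ for the common value $m_1(\yy)=\dots=m_{n+1}(\yy)$. By Theorem \ref{th:minimax}, the unique extremal node system $\yy$ belongs to $X$ and equioscillates.

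First, the upper inequality. Optimality of $\yy$ gives $\mol(\xx)\ge \mol(\yy)=m(\yy)$ for every $\xx\in S$. Suppose equality held, $\mol(\xx)=m(\yy)$. Then $\xx$ would be a minimax node system as well. By the uniqueness corollary, the extremal system is unique, so $\xx=\yy$, which contradicts $\xx\ne\yy$. Hence $\mol(\xx)>m(\yy)$.

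For the lower inequality, suppose to the contrary that $\mul(\xx)\ge m(\yy)$. Then $m_i(\yy)=m(\yy)\le m_i(\xx)$ for every $i=1,\ldots,n+1$, so $\yy$ is majorized by $\xx$ coordinatewise. Applying the first part of Theorem \ref{th:nonmajorization} to the pair with the roles $(\yy,\xx)$, we get either $\yy=\xx$ or $\xx,\yy\in X^c$. The first alternative is excluded by assumption. The second is excluded because $\yy\in X$ by Theorem \ref{th:minimax}. Therefore $\mul(\xx)<m(\yy)$.

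There is no real obstacle here: all the substance lies in Theorem \ref{th:nonmajorization}, which the corollary is allowed to invoke. Note also that the upper bound could alternatively be obtained from Theorem \ref{th:nonmajorization} itself, applied with the roles $(\xx,\yy)$, using $\yy\in X$. The uniqueness argument above is an equally valid route to that same inequality.
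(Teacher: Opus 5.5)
Your proof is correct and follows essentially the paper's route: the paper also excludes coordinatewise majorization in either direction by applying Theorem \ref{th:nonmajorization} together with $\yy\in X$ from Theorem \ref{th:minimax}. The only difference is that you get the upper inequality directly from optimality plus uniqueness of the extremal system, which the hypothesis already grants and which is a harmless, slightly more elementary shortcut; you also correctly allow arbitrary $\xx\in S$, whereas the paper's text writes $\xx\in X$.
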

\begin{proof} By Theorem \ref{th:minimax}, the unique extremal node system $\yy$ is situated in $X$, and equioscillates. Therefore $\overline{m}(\yy)=m_{n+1}(\yy)>1$. Now let $\xx \in X$ be a node system with $\mm(\xx)$ majorizing $\mm(\yy)$, or conversely, with $\mm(\yy)$ majorizing $\mm(\xx)$. Then according to Theorem \ref{th:nonmajorization} either $\xx=\yy$, which was excluded in the formulation of our Corollary, or we would have both $\xx, \yy \in X^c$, which contradicts to $\yy \in X$. Therefore, majorizing in any direction is impossible. The corollary is proved.
\end{proof}

\begin{proof}[Proof of Theorem \ref{th:nonmajorization}]
First, let $\xx=(x_1,\dots,x_n)$ belong to $X^c$, i.e., $z_{n+1}(\xx)=x_n$. Then with any $x_n'>x_n$ the node system  $\xx'=(x_1,\dots,x_{n-1},x_n')$ belongs to $ X^c$, too, in view of Lemma \ref{l:xstar}.

Moreover, Lemma \ref{l:partialmixn} (i.e., $\partial m_i/\partial x_n>0$ in $X^c$) entails that $m_i(\xx') > m_i(\xx)$ for $i=1,\dots,n$, whereas $m_{n+1}(\xx)=m_{n+1}(\xx')=1$. It follows that $\xx'$ majorises $\xx$, and the majorization is strict save the last coordinate $m_{n+1}$. Note that we found a majorant for every $\xx \in X^c$ with the same first coordinates $x_1,\ldots,x_{n-1}$. In particular, the second part of the assertion follows.

Now, assume that $m_i(\xx) \le m_i(\yy)$ such that $\xx$ and $\yy$ do not belong to $X^c$, simultaneously. It implies that $\yy\in X$, (otherwise $1=m_{n+1}(\yy)\ge m_{n+1}(\xx)\ge 1$ shows that $\xx\in X^c$, too).

First we prove that then either $\xx=\yy$, and we have equalities for every $i$, or there exists some strictly majorizing $\yy'\in X$ of $\xx$. Indeed, if $m_i(\yy)=m_i(\xx)$ for all indices $i$, then also $\Gamma(\xx)=\Gamma(\yy)$, hence by Theorem \ref{t:homeom} we must have $\xx=\yy$, too. Furthermore, if there exists an index $k$ with $m_k(\yy)>m_k(\xx)$, then referring to $D_k(\yy)\ne 0$ (for $\yy \in X$), we can slightly perturb the node system $\yy$ to some $\yy'$ with all values $m_i(\yy')$ with $i\ne k$ prescribed to strictly exceed $m_i(\xx)$, whereas  $m_k(\yy')$ can still be kept larger than $m_k(\xx)$.

So we can as well assume that $\yy$ strictly majorises $\xx$: $m_i(\yy)>m_i(\xx)$, $i=1,\ldots,n+1$. Denote $\mu:=\mu(\yy):=\min\limits_{1\le i\le n+1}(m_i(\yy)-m_i(\xx))$ and $M:=M(\yy):=\max\limits_{1\le i\le n+1}(m_i(\yy)-m_i(\xx))$. Strict majorization means $\mu>0$. Also, in general for arbitrary $\ww \in S$ denote
$$
\mu(\ww):=\min\limits_{1\le i\le n+1}(m_i(\ww)-m_i(\xx)), \qquad  M(\ww):=\max\limits_{1\le i\le n+1}(m_i(\ww)-m_i(\xx)).
$$
Let us consider the set of points $W:=\{\ww \in S~:~ \mu(\ww)\ge \mu, M(\ww)\le M\}$. As $\yy \in W$, this set is nonempty; by continuity of the functions $m_i$, the intersection of these closed level sets is also closed. Further, if $\ww \in W$, then $\mol(\ww)\le \mol(\xx)+M$, hence $W \subset \mm^{-1}([1,\mol(\xx)+M]^{n+1})$. The latter set is a compact subset of $S$, for $\mm$ is a proper mapping according to Corollary \ref{c:proper_m}. Therefore, its closed subset $W$ is also a compact subset of $S$. Also, $W \subset X$ because $\mu(\ww)\ge \mu>0$ entails $m_{n+1}(\ww) >1$. By compactness, the continuous function $\mu(\ww)$ has a finite maximum value $\mu^*$ on $W$, and $\mu$ must attain this maximum on $W$. So let us restrict attention to the maximal subset $Z\subset W$, where $\mu(\ww)=\mu^*$. As another closed level set, it is a closed, hence compact subset of $W$, which is not empty (in view of the existence of a $\mu$-maximal point in $W$). Let us now \emph{minimize} the value of $M(\ww)$ on $Z$. As before, on the compact set $Z$ there must exist some point(s) $\zz \in Z \subset W$ with $M(\zz)=M^*:=\min\{ M(\ww)~:~ \ww \in Z\}$.

Now we claim that $M(\zz)=\mu(\zz)$ for this node system. Before proving, let us see that it concludes the proof of Theorem \ref{th:nonmajorization}. Indeed, then we would arrive at a node system $\zz$, such that $m_i(\zz)=m_i(\xx)+\mu^*$ for all $i=1,\ldots,n$, hence $\Gamma(\zz)=\Gamma(\xx)$, so that taking into account that $\Gamma$ is a homeomorphism, also $\zz=\xx$, which is not possible as we had $\mu^* \ge \mu(\yy)>0$.

So it remains to prove $M^*=\mu^*$, i.e., $M(\zz)=\mu(\zz)$. Let us assume for a contradiction that $M(\zz)>\mu(\zz)$. Then write for the index sets of maximal and minimal differences $\I$ and $\JJ$:
\begin{equation}
\begin{aligned}
\I:=\{K~:~m_{K}(\zz)-m_{K}(\xx)=M^*\}; \qquad
%% \\[2mm]
\JJ:=\{k~:~ m_{k}(\zz)-m_{k}(\xx)=\mu^*\}.
\end{aligned}
\end{equation}
Then $\mu^*<M^*$ and $\I\cap \JJ=\emptyset$, but neither $\I$ nor $\JJ$  is empty, so that each set can have at most $n$ indices in them. Let us pick one $K\in \I$ and one $k\in \JJ$. Invoking Corollary \ref{l:pert_mkmi} we get  $\frac{\partial m_K}{\partial m_k}<0$, hence we can obtain a new node system $\zz'$ in $X$, arbitrarily close to $\zz$, such that $m_i(\zz)=m_i(\zz')$ for all $i\ne k, K$, while the corresponding $m_K(\zz')$ is a little smaller, than $m_K(\zz)$ and simultaneously $m_k(\zz')$ is a little greater than $m_k(\zz)$. Note that for the indices $i\ne k,K$, the difference $m_i(\zz')-m_i(\xx)$ remained equal to $m_i(\zz)-m_i(\xx)$, as $m_i$ was not changed.

Therefore, for the new node system $\zz'$ the respective values $\mu':=\mu(\zz') \ge \mu(\zz)$, and $M':=M(\zz') \le M(\zz)$, so that $\zz'\in W$, too, moreover, either $M'< M^*$, or at least $K$ is dropped from $\I$, and either $\mu'>\mu^*$, or at least $k$ is dropped from $\JJ$. Therefore, in at most $n$ steps we arrive at another node system $\zz'' \in X$, arbitrarily close to $\zz$, such that either $\mu(\zz'')>\mu(\zz)$ or $M(\zz'')<M(\zz)$. As the minimum is not decreased, and the maximum is not increased, the perturbed node system remains in the range where $m_i(\zz'')\in [m_i(\xx)+\mu^*,m_i(\xx)+M^*]$. That is, $\zz, \zz', \zz'' \in Z$. However, $\mu^*$ was maximal and $M^*$ was minimal possible, so the very existence of $\zz''\in Z$ yields a contradiction. Consequently, the equality $\mu(\zz)=M(\zz)$ must hold, and the theorem is proved.
\end{proof}

\end{document}